\let\oldmarginpar\marginpar
\renewcommand\marginpar[1]{\-\oldmarginpar[\raggedleft\footnotesize\color{red} #1]%
{\raggedright\footnotesize\color{red} #1}}
 \newcommand{\R}{{\mathbb R}}
 \newcommand{\Z}{{\mathbb Z}}
 \newcommand{\N}{{\mathbb  N}}
 \newcommand{\g}{{\mathfrak g}}
 \newcommand{\A}{{\mathscr  A}}
\newcommand{\h}{{\mathfrak h}}
\newcommand{\<}{\langle}
  \renewcommand{\>}{\rangle}
\newtheorem{theorem}{Theorem}[section]
\newtheorem{lemma}[theorem]{Lemma}
\newtheorem{conjecture}[theorem]{Conjecture}
\newtheorem{corollary}[theorem]{Corollary}
\newtheorem{proposition}[theorem]{Proposition}
\newtheorem{fact}[theorem]{Fact}
\newtheorem{definition-lemma}[theorem]{Definition-Lemma}
\newtheorem{definition-theorem}[theorem]{Definition-Theorem}
\newtheorem{proposition-definition}[theorem]{Proposition-Definition}
\theoremstyle{definition}
\newtheorem{definition}[theorem]{Definition}
\newtheorem{example}[theorem]{Example}
\newtheorem{remark}[theorem]{Remark}
\newtheorem{notation}[theorem]{Notation}
\newtheorem{convention}[theorem]{Convention}
\newtheorem*{ack}{Acknowledgements}
\begin{document}

\title{Quantization of the minimal nilpotent orbits and\\ the quantum Hikita conjecture
\footnotetext{Email:
xjchen@scu.edu.cn (Chen), hewq@mail2.sysu.edu.cn (He), banaenoptera@163.com (Yu)
}}

\author[1,2]{Xiaojun Chen}
\author[3]{Weiqiang He}
\author[3]{Sirui Yu}

\renewcommand\Affilfont{\small}

\affil[1]{School of Mathematics, Sichuan University, Chengdu, 610064 P.R. China}
\affil[2]{Department of Applied Mathematics, New Uzbekistan University,
Tashkent, 100007 Uzbekistan}
\affil[3]{School of Mathematics, Sun Yat-sen University, Guangzhou, 510275 P.R. China}

\date{}

\maketitle
\begin{abstract}
We show that the specialized quantum D-module of the
equivariant quantum cohomology ring of the minimal resolution
of an ADE singularity
is isomorphic to the D-module of graded traces on the
minimal nilpotent orbit in the Lie algebra of the same type. This generalizes
a recent result of Shlykov [Hikita conjecture for the minimal nilpotent orbit,
to appear in Proc. AMS, \href{https://doi.org/10.1090/proc/15281}{https://doi.org/10.1090/proc/15281}]
and hence verifies in this case the quantum version
of Hikita's conjecture,
proposed by Kamnitzer, McBreen and Proudfoot [The quantum Hikita conjecture,
Advances in Mathematics 390 (2021) 107947].
We also show analogous isomorphisms for singularities of BCFG type.

\noindent{\bf Keywords:} Symplectic duality, Kleinian singularity,
nilpotent orbit, quantum cohomology, quantization

\noindent{\bf MSC2020:} 14B05, 17B08, 53D55, 55N91
\end{abstract}

\setcounter{tocdepth}{2}\tableofcontents


\section{Introduction}
Over the past two decades, 3d $\mathcal N=4$ mirror symmetry has attracted a lot
of attentions from both physicists and mathematicians (see, for example,
\cite{BFN,BDG,IS,N} and references therein). It is also equivalent
to the theory of \textit{symplectic duality} of Braden et al. \cite{BPW,BLPW} (see also
\cite{Kam} for a survey).
For two (possibly singular) manifolds that are symplectic dual to each
other, there are some highly nontrivial identities between
the geometry and topology of them. One of the properties predicted
by 3d $\mathcal N=4$ mirror symmetry and
symplectic duality is Hikita's conjecture.
Suppose we are given a pair of symplectic dual conical symplectic singularities,
then Hikita's conjecture relates the coordinate ring of one
symplectic variety to the cohomology ring of the symplectic
resolution of the other, which is stated as follows.

\begin{conjecture}[Hikita {\cite[Conjecture 1.3]{Hi}}]\label{conj:Hikita}
Let $X$ and $X^{!}$ be a pair of symplectic dual conical
symplectic singularities over $\mathbb{C}$.
Suppose $X^{!}$ admits
a conical symplectic resolution $\tilde{X}^{!}\rightarrow X^{!}$,
and suppose $T$ is a maximal torus of the Hamiltonian action
on $X$. Then there is an isomorphism of graded algebras
\begin{eqnarray*}
\mathrm{H}^\bullet(\tilde{X}^{!},\mathbb{C})\cong\mathbb{C}[X^T].
\end{eqnarray*}	
\end{conjecture}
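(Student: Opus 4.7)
The conjecture as stated is open in its full generality; the realistic goal, and the one pursued by this paper, is the special case where $X = \overline{\mathcal{O}_{\min}} \subset \g$ is the closure of the minimal nilpotent orbit in a simple Lie algebra of \emph{ADE} type and $X^{!} = \mathbb{C}^{2}/\Gamma$ is the corresponding Kleinian singularity (with $\Gamma$ matched to $\g$ via the McKay correspondence), whose minimal resolution $\widetilde{X}^{!}$ is the space appearing on the left. I would actually aim for the stronger equivariant refinement and produce an isomorphism
\[
\mathrm{H}^{\bullet}_{T}(\widetilde{X}^{!},\mathbb{C}) \;\cong\; B(\mathcal{A}),
\]
where $\mathcal{A}$ is a filtered quantization of $\mathbb{C}[\overline{\mathcal{O}_{\min}}]$ and $B(\mathcal{A})$ denotes its $B$-algebra; the original conjecture then follows by specialising the equivariant parameter to zero.

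The first step is to fix $\mathcal{A}$ and describe $B(\mathcal{A})$ concretely. The minimal orbit admits a canonical quantization: outside type $A$ it is the quotient of $U(\g)$ by the Joseph ideal, and in type $A$ the analogous completely prime primitive ideal. Using a Jacobson--Morozov $\mathfrak{sl}_{2}$-triple through a highest root vector one grades $U(\g)$ so that $\mathcal{A}$ is nonnegatively filtered, and $B(\mathcal{A})$ becomes the weight-zero part modulo the ideal generated by strictly positive weights; this should yield a finite presentation in terms of Cartan generators together with correction terms coming from the Joseph relations. The second step is geometric: $\widetilde{X}^{!}$ is a union of $\mathbb{P}^{1}$'s whose dual graph is the Dynkin diagram of $\g$, so $\mathrm{H}^{\bullet}_{T}(\widetilde{X}^{!})$ is computable either by equivariant localisation or, more compactly, by the GKM recipe as an algebra of piecewise polynomials on this Dynkin graph with values in $\mathrm{Sym}(\h^{*})$. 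One then writes down a candidate map from $B(\mathcal{A})$ into this GKM algebra, sending the image of a Cartan generator $h_{i}$ to the equivariant class supported on the $i$-th exceptional $\mathbb{P}^{1}$ and the central equivariant parameter on both sides to itself.

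The main obstacle will be showing that this map is well defined and an isomorphism, uniformly across ADE types. The kernel of the projection $\mathcal{A}^{0} \twoheadrightarrow B(\mathcal{A})$ is controlled by the Serre-type relations of $\g$ as they deform inside the Joseph quotient; pulling them through the Harish-Chandra projection produces polynomial identities in the equivariant parameter that must reproduce the GKM edge conditions exactly. In Shlykov's non-equivariant predecessor each Dynkin diagram contributed a single scalar identity, whereas in the equivariant setting one must track a whole family indexed by the equivariant degree, and types $D$ and $E$ do not reduce to rank-two subcalculations. A type-free proof, organising these identities through the combinatorics of the highest root together with Weyl-group invariance, is therefore the heart of the argument; a surjection plus a PBW/Hilbert-series dimension count should then upgrade the matching of generators to a full isomorphism.
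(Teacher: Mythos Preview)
You correctly recognise that the statement is a conjecture and that the realistic target is the ADE minimal-orbit case, and you sensibly aim for the equivariant refinement. Your overall architecture --- quantize via the Joseph ideal, compute the $B$-algebra, compute the equivariant cohomology of the resolution, then match --- is the same as the paper's. The execution you propose, however, is genuinely different and more ambitious than what the paper does.

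The paper proceeds by brute force, type by type. On the geometric side it uses toric localisation for $A_n$ and quotes Bryan--Gholampour's closed formula for the $\mathbb{C}^{\times}$-equivariant cup product in types $D$ and $E$; on the algebraic side it writes down Garfinkle's explicit generators of the Joseph ideal, repeatedly applies $\mathrm{ad}_{X_\alpha}$ to manufacture weight-zero elements, projects to $\mathcal{U}(\mathfrak h)$, and solves the resulting linear systems to get a complete table of relations $h_i h_j = \cdots$ in $B(\mathcal A)$. The isomorphism is then literally a line-by-line comparison of two tables. Your proposal replaces this with a GKM description of $\mathrm{H}^\bullet_T(\widetilde{X}^{!})$ and a uniform, Weyl-invariant argument matching Harish-Chandra projections of the Joseph relations to GKM edge conditions. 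That would be far more satisfying; indeed the paper's final paragraph explicitly wishes for such a closed formula on the $B$-algebra side and admits the authors did not find one.

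There is a concrete obstacle in your plan. In types $D$ and $E$ the torus acting on $\widetilde{\mathbb{C}^2/\Gamma}$ is only one-dimensional, so the standard GKM hypotheses (isolated fixed points and isolated one-dimensional orbits for a torus of rank at least two) are not available, and the ``piecewise polynomials on the Dynkin graph'' picture does not come for free. You would need either a substitute for GKM in rank one or to fall back on Bryan--Gholampour's formula, which already encodes the answer but was itself proved by explicit root-system computations. Likewise, the ``type-free'' matching of Joseph relations to edge conditions is precisely the step the paper could not carry out uniformly; the difficulty is that the Joseph ideal in types $D$ and $E$ involves the lowest weight vectors of several different Cartan components of $\mathrm{Sym}^2\mathfrak g$, and pushing these through the Harish-Chandra projection does not obviously organise itself by simple roots. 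Your dimension-count strategy for turning a surjection into an isomorphism is exactly what the paper does (via $\dim I^2_0 = \dim \mathrm{Sym}^2\mathfrak h$), so that part is sound.
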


In loc. cit. Hikita proved this conjecture in several cases, such as
hypertoric varieties, Spaltenstein varieties and the Hilbert schemes of points in
the plane. He then asked whether this phenomenon holds for
other examples of symplectic duality.
In \cite{KTW}, Kamnitzer et. al. proved Hikita's conjecture
for the case of Nakajima quiver varieties of type A and affine Grassmannian slices,
which are symplectic dual to each other. In loc. cit., they also
stated a conjecture of Nakajima, which generalized Hikita's conjecture
to the {\it equivariant} case (see \cite[\S8]{KTW}).
In his Ph.D. thesis \cite{Weekes}, Weekes proved this
conjecture for symplectic dual pairs studied in \cite{KTW}.
In a recent paper \cite{KS}, Krylov and Shlykov
called this conjecture the {\it Hikita-Nakajima conjecture}
and proved it for Gieseker varieties (the ADHM spaces).

In \cite{KMP},
Kamnitzer, McBreen and
Proudfoot further generalized the Hikita-Nakajima
conjecture
to the {\it quantum} case, and proved it for nilpotent cones
in ADE type Lie algebras
and hypertoric varieties.
A bit more precisely, in loc. cit., they introduced,
for a symplectic dual pair $X$ and $X^!$, two concepts:
one is the so-called {\it specialized quantum D-module},
which is induced by the equivariant quantum cohomology
of $X^!$, and the other is the {\it D-module of graded
traces} on $X$, which may be understood as the
``graded functions" (the universal source of graded traces)
of the quantized coordinate ring of $X$.
Kamnitzer et. al. conjectured that these
two types of D-modules are isomorphic, and called it
the {\it quantum Hikita conjecture}.

According to 3d mirror symmetry, the  minimal
nilpotent orbit closure $\overline{\mathcal O}_{min}$
in a simple Lie algebra
$\mathfrak{g}$ of ADE type is mirror to (or equivalently symplectic dual to)
the intersection of a Slodowy slice to the
subregular nilpotent orbit with the nilpotent cone in the same Lie algebra;
 see, for example, \cite{Webster} and \cite[Remark 10.6]{BLPW}.
This is highly related to the duality discovered by Spaltenstein
\cite{Spa} and Lusztig \cite{Lus} (see also \cite{CM} for more details).
Recall that by Brieskorn \cite{Br} and Slodowy \cite{S}, the latter
is isomorphic to the Kleinian singularity
$\mathbb C^2/\Gamma$ of the same type.
If we denote by $\widetilde{\mathbb C^2/\Gamma}$
the minimal resolution of $\mathbb C^2/\Gamma$,
then in a recent paper \cite{Sh},
Shlykov showed that
$$\mathrm H^\bullet(\widetilde{\mathbb C^2/\Gamma})
\cong\mathbb C[\overline{\mathcal O}_{min}^{\mathbb C^\times}]
$$
as graded algebras, and hence verified Hikita's conjecture in these cases.
The purpose of this paper is to
generalize his work to the quantum case.

\begin{theorem}\label{maintheorem0}
Let $\mathfrak g$ be a complex semisimple Lie algebra of ADE type,
and let $\overline{\mathcal O}_{min}$
be the closure of the minimal nilpotent orbit in $\mathfrak g$.
Let $\widetilde{{\mathbb C^2}/\Gamma}$ be the minimal resolution
of the singularity of the same type. Then the quantum Hikita
conjecture
holds for the pair $\widetilde{{\mathbb C^2}/\Gamma}$ and
$\overline{\mathcal O}_{min}$; more precisely,
\begin{enumerate}
\item[$(1)$] for $\mathbb C^2/\Gamma$ being the $A_n$ singularity,
there is an isomorphism
$$
\mathrm{QH}^\bullet_{(\mathbb C^{\times})^2}
(\widetilde{{\mathbb C^2}/\Gamma})
\cong Q(\mathscr A[\overline{\mathcal O}_{min}])$$
of graded modules over
$F_{\mathrm{reg}}\otimes \mathrm{Sym} \mathrm{H}^2_{(\mathbb C^{\times})^2}
(\widetilde{{\mathbb C^2}/\Gamma})$;

\item[$(2)$] for other types of singularity, there is an isomorphism
$$\mathrm{QH}^\bullet_{\mathbb C^{\times}}
(\widetilde{{\mathbb C^2}/\Gamma})
\cong Q(\mathscr A[\overline{\mathcal O}_{min}]),
$$
of graded modules over
$
 F_{\mathrm{reg}}\otimes \mathrm{Sym} \mathrm{H}^2_{\mathbb C^{\times}}
(\widetilde{{\mathbb C^2}/\Gamma})$,
\end{enumerate}
where $\mathrm{QH}^\bullet(-)$
is the specialized quantum D-module,
and $Q(\mathscr A(-))$ is the D-module of graded traces (see
\S\ref{sect:quatum D-module} and \ref{sect:D-module of graded traces}
for the definitions of these two concepts as well as the base rings).
\end{theorem}

In the above theorem, if we let the quantum parameter
$q=0$, then both the specialized quantum D-module
and the D-module of graded traces are algebras,
and therefore
we get the corresponding Hikita-Nakajima conjecture:

\begin{theorem}\label{maintheorem}
With the notations in Theorem \ref{maintheorem0}, the
Hikita-Nakajima
conjecture
holds for the pair $\widetilde{{\mathbb C^2}/\Gamma}$ and
$\overline{\mathcal O}_{min}$; that is,
there are isomorphisms of graded algebras:
$$
\begin{array}{ll}
\mathrm{H}^\bullet_{(\mathbb C^{\times})^2}
(\widetilde{{\mathbb C^2}/\Gamma})
\cong B(\mathscr A[\overline{\mathcal O}_{min}]),
&\mbox{if $\mathbb C^2/\Gamma$ is an $A_n$ singularity,}\\[2mm]
\mathrm{H}^\bullet_{\mathbb C^{\times}}
(\widetilde{{\mathbb C^2}/\Gamma})
\cong B(\mathscr A[\overline{\mathcal O}_{min}]),
&\mbox{otherwise,}
\end{array}
$$
where
$B(-)$ is the associated $B$-algebra (see \S\ref{sect:B-algebra} for the
definition).
\end{theorem}

The notion of the $B$-algebra of a graded associative algebra
is introduced by Braden et. al. in \cite{BLPW},
which is the quantization of the fixed point scheme
of a scheme with a torus action.
It plays an essential
role in the Hikita-Nakajima conjecture (see \cite{KMP,KTW,KS,Weekes}).

Let us also say some words about the torus actions.
In the above two theorems, the $A_n$ singularities
are toric varieties,
and hence we naturally consider the
$(\mathbb C^\times)^2$-equivariant quantum cohomology for them.
We also expect that the isomorphism in Theorem \ref{maintheorem} in the $A_n$ case should
be identical to the one proved by Weekes \cite{Weekes} by a different method.
For singularities of DE type,  there is only a natural $\mathbb C^\times$-action
on them, and we can only consider
their $\mathbb C^\times$-equivariant quantum cohomology, which
has been studied by Bryan and Gholampour in \cite{BG}.

On the other side, Joseph gave in \cite{Jo}
the quantizations of the minimal nilpotent orbit closures
in simple Lie algebras.
They are the quotients of the corresponding universal enveloping
algebras by some two-sided ideals, which
are nowadays called the Joseph ideals.
Later, Garfinkle in her thesis \cite{Ga} constructed
explicitly the Josephs ideals.
Interestingly enough, the Joseph ideals in the type A case
are not unique, but are parameterized by the complex numbers $\mathbb C$.
Thus in the type A case, if we view the number that parameterizes
the Joseph ideals as a formal variable, then the quantizations
of the minimal orbits in this case are over the ring of polynomials of two variables,
which exactly matches the base ring of the $(\mathbb C^\times)^2$-equivariant
cohomology of the dual side. For the other types of Lie algebras,
the Joseph ideals are uniquely determined, and hence
the quantizations are over the polynomials of one variable.

If we take the usual $\mathbb C^\times$-action on an $A_n$ singularity
similar to that
on the DE singularities
and take a specific Joseph ideal in its symplectic dual side (see Remark \ref{rem:JosephidealintypeA}),
then all the isomorphisms in the above two theorems become
\begin{equation}\label{iso:uniformform}
\mathrm{QH}^\bullet_{\mathbb C^{\times}}
(\widetilde{{\mathbb C^2}/\Gamma})
\cong Q(\mathscr A[\overline{\mathcal O}_{min}])
\quad\mbox{and}\quad
\mathrm{H}^\bullet_{\mathbb C^{\times}}
(\widetilde{{\mathbb C^2}/\Gamma})
\cong B(\mathscr A[\overline{\mathcal O}_{min}])
\end{equation}
respectively.

Moreover,
Theorems \ref{maintheorem0} and \ref{maintheorem} can be generalized
to the BCFG type singularities as follows. Recall that the Lusztig-Spaltenstein
duality says
the subregular nilpotent orbit
in a Lie algebra of BCFG type
is dual to
the minimal {\it special} nilpotent orbit
in its Langlands dual.
A theorem of Brylinski and Kostant \cite{BK}
says that the minimal special nilpotent orbits
in these cases are covered
by the minimal nilpotent orbits of $D_{n+1}$,
$A_{2n-1}$, $E_6$ and $D_4$ respectively,
with the deck transformations $\mathbb Z_2$
or $\mathfrak S_3$. On the other hand,
Slodowy showed in \cite{S} that the intersections of
Slodowy slices to the subregular nilpotent orbit
with the nilpotent cone
in these Lie algebras, which are also called
the {\it simple singularities of BCFG type}, is isomorphic to those in $A_{2n-1}$, $D_{n+1}$, $E_6$
and $D_4$, together with some extra symmetry also
given by either $\mathbb Z_2$ or $\mathfrak S_3$.
For these types of singularities,
Bryan and Gholampour constructed a version
of equivariant quantum algebra,
denoted by $\mathrm{QH}_R^\bullet(-)$, according to the associated
root systems/Dynkin diagrams. They also showed these algebras admit a
Frobenius algebra structure (see \cite{BG}).

Considering the associated D-modules of these varieties, we obtain the
following result, which is a corollary of Theorem
\ref{maintheorem0}:

\begin{theorem}\label{maintheorem2}
Let $\mathcal B_n$,
$\mathcal C_n$, $\mathcal F_4$ and
$\mathcal G_2$ be the minimal resolutions of
singularities of $B_n$, $C_n$, $F_4$ and $G_2$ respectively,
and let $\tilde{\mathcal O}_{ms}[B_n]$, $\tilde{\mathcal O}_{ms}[C_n]$,
$\tilde{\mathcal O}_{ms}[F_4]$ and
$\tilde{\mathcal O}_{ms}[G_2]$ be the normalizations
of the closures of the minimal special nilpotent orbits
in Lie algebras of BCFG type respectively.
Then
$$
\begin{array}{ll}
\mathrm{QH}^\bullet_{R}
(\mathcal B_n)\cong
Q(\mathscr A[\tilde{\mathcal{O}}_{ms}(C_n)]),&
\mathrm{QH}^\bullet_{R}
(\mathcal C_n)\cong
Q(\mathscr A[\tilde{\mathcal{O}}_{ms}(B_n)]),\\[2mm]
\mathrm{QH}^\bullet_{R}
(\mathcal F_4)\cong
Q(\mathscr A[\tilde{\mathcal{O}}_{ms}(F_4)]),&
\mathrm{QH}^\bullet_{R}
(\mathcal G_2)\cong
Q(\mathscr A[\tilde{\mathcal{O}}_{ms}(G_2)])
\end{array}
$$
as D-modules over the corresponding base rings.
\end{theorem}

Again, letting the quantum parameter $q=0$, we get the following
(see \S\ref{sect:BCFG} for more details about the notions):

\begin{theorem}\label{maintheorem3}
Let $\mathcal B_n$,
$\mathcal C_n$, $\mathcal F_4$ and
$\mathcal G_2$ be the minimal resolutions of
singularities of $B_n$, $C_n$, $F_4$ and $G_2$ respectively,
and let $\tilde{\mathcal O}_{ms}[B_n]$, $\tilde{\mathcal O}_{ms}[C_n]$,
$\tilde{\mathcal O}_{ms}[F_4]$ and
$\tilde{\mathcal O}_{ms}[G_2]$ be the normalizations
of the closures of the minimal special nilpotent orbits
in Lie algebras of BCFG type respectively.
Then
$$
\begin{array}{ll}
\mathrm H^\bullet_{\mathbb Z_2\times\mathbb C^\times}
(\mathcal B_n)\cong
B(\mathscr A[\tilde{\mathcal{O}}_{ms}(C_n)]),&
\mathrm H^\bullet_{\mathbb Z_2\times\mathbb C^\times}
(\mathcal C_n)\cong
B(\mathscr A[\tilde{\mathcal{O}}_{ms}(B_n)]),\\[2mm]
\mathrm H^\bullet_{\mathbb Z_2\times\mathbb C^\times}
(\mathcal F_4)\cong
B(\mathscr A[\tilde{\mathcal{O}}_{ms}(F_4)]),&
\mathrm H^\bullet_{\mathfrak S_3\times\mathbb C^\times}
(\mathcal G_2)\cong
B(\mathscr A[\tilde{\mathcal{O}}_{ms}(G_2)])
\end{array}
$$
as algebras over $\mathbb C[\hbar]$.
\end{theorem}

The rest of this paper is devoted to the proofs of the above
two theorems. It is organized as follows.
In \S\ref{sect:cohomologyofADE} we first recall some
basic facts on Kleinian singularities, and
then compute the equivariant quantum cohomology of
the minimal resolutions of these singularities.
In \S\ref{sect:quantization}
we go over the quantizations of the minimal nilpotent orbit closures
in Lie algebras of ADE type, which is due to Joseph \cite{Jo} and Garfinkle \cite{Ga}.
After that, in \S\ref{sect:Q(A)} we study with some details
the corresponding $B$-algebra of these quantizations.
In \S\ref{sect:thequantumHikitaconj}, we first recall Kamnitzer-McBreen-Proudfoot's
version of the quantum Hikita conjecture, and then
prove Theorems \ref{maintheorem0} and \ref{maintheorem}.
In \S\ref{sect:BCFG}, we study the quantizations
of the minimal special orbits
and the equivariant cohomologies of the minimal resolutions of BCFG type
singularities, and prove Theorems \ref{maintheorem2} and \ref{maintheorem3}.

\begin{ack}
In the spring of 2021, Professor Yongbin Ruan gave a series of lectures
at Zhejiang University
on his project on the mirror symmetry of nilpotent orbits
of semi-simple Lie algebras.
This paper is also motivated
by our study of his lectures. We are extremely grateful to him as well as
IASM, Zhejiang University
for inviting us to attend the lectures and for offering excellent working conditions.
We also thank Xiaowen Hu, Huazhong Ke and Yaoxiong Wen
for some very valuable discussions.
This work is supported by NSFC Nos. 11890663, 12271377 and 12261131498.
\end{ack}

\section{Equivariant quantum cohomology of ADE resolutions}\label{sect:cohomologyofADE}

In this section, we study the equivariant quantum
cohomology of the minimal resolutions of Kleinian singularities.
In \S\ref{subsect:Kleiniansing}
we briefly recall the definition of ADE singularities.
In \S\ref{subsect:Cstarequivcoho} we go over Bryan and Gholampour's result
in \cite{BG}
on the $\mathbb C^\times$-equivariant quantum cohomology of resolution of ADE singularities, and then \S\ref{subsect:typeA}
we further study the $(\mathbb C^\times)^2$-equivariant
quantum cohomology of the minimal resolution of $A_n$ singularities.
For $A_n$ singularities, we shall use both of them in later sections.

\subsection{Kleinian singularities}\label{subsect:Kleiniansing}

Let $\Gamma$ be a finite subgroup of
$\mathrm{SL}_2(\mathbb C)$.
It naturally acts on $\mathbb C^2$ via the canonical
action of $\mathrm{SL}_2(\mathbb C)$.
The singularity
$\mathbb C^2/\Gamma$ is called a Kleinian singularity,
and has been widely studied. The following table summarizes
the classification of Kleinian singularities:

\begin{center}
\begin{tabular}{p{2cm}p{4cm}p{2cm}p{5cm}}
\hline
Type & $\Gamma$ &  $|\Gamma|$ &Defining equation\\
\hline\hline
$A_n$& Cyclic Group $\mathbb{Z}_{n+1}$ & $n+1$ & $x^{n+1}-yz=0$\\
$D_n$& Binary Dihedral & $4(n-2)$&$x(y^2+x^{n-2})+z^2=0$\\
$E_6$& Binary Tetrahedral & $24$ &$x^4+y^3+z^2=0$\\
$E_7$& Binary Octahedral & $48$ &$x^3+xy^3+z^2=0$\\
$E_8$& Binary Icosahedral & $120$ &$x^5+y^3+z^2=0$\\
\hline
\end{tabular}
\end{center}

The singularity $\mathbb C^2/\Gamma$ has a unique minimal
resolution, denoted by $\widetilde{\mathbb C^2/\Gamma}$,
whose exceptional fiber is given by a tree of $\mathbb{CP}^1$'s.
The corresponding tree, whose vertices are the $\mathbb{CP}^1$'s
and whose edges between two given vertices are identified with the intersection
points of the corresponding $\mathbb{CP}^1$'s.
It turns out that the trees such constructed are exactly the Dynkin diagrams
of the Lie algebra of the same type.

There is another direct relationship between the Kleinian singularities
and the Lie algebras; namely, the Kleinian singularities
are exactly the Slodowy slices to the subregular nilpotent orbits
in the Lie algebra of the same type (see Slodowy \cite{S}
for more details).

Let $\mathfrak g$ be a Lie algebra. Recall that
the nilpotent cone of $\mathfrak g$, usually denoted
by $\mathcal N$, is the set
$$\mathcal N:=\left\{x\in\mathfrak g: (\mathrm{ad}_x)^n=0\,\,
\mbox{for some $n\in\mathbb N$}\right\}.$$

\begin{definition}[Slodowy slice \cite{S}]\label{def:Slodowyslice}
Let $x \in\mathfrak{g}$ be a nilpotent element, and extend it
to be an $\mathfrak{sl}_2(\mathbb{C})$
triple $\{ x, h, y\} \subseteq \mathfrak{g}$. The
Slodowy slice associated to $(\mathfrak{g}, x)$ is the
affine sub-variety $S = x + \ker[y, -] \subseteq \mathfrak{g}$.
\end{definition}

It is a transverse slice to the nilpotent orbit of the point $x$.

\begin{theorem}[Brieskorn \cite{Br} and Slodowy \cite{S}]
\label{Grothendieck-Brieskorn}
Let $\mathfrak{g}$ be simply-laced, $\mathcal{N} \subseteq \mathfrak{g}$
denote the nilpotent cone, and $S_x$ be a Slodowy slice to a subregular nilpotent
element $x\in\mathcal{O}_{sub}$. The intersection $S_x \cap \mathcal{N}$
is a Kleinian surface singularity with the same Dynkin diagram as
$\mathfrak{g}$. Moreover, the symplectic resolution
$\widetilde{S_x \cap \mathcal{N}}\rightarrow S_x \cap \mathcal{N}$
is the same as the minimal resolution of the Kleinian singularity
$\widetilde{\mathbb{C}^2/\Gamma}\rightarrow\mathbb{C}^2/\Gamma$.
\end{theorem}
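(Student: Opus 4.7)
The plan is to deduce the theorem from Grothendieck's simultaneous resolution of the adjoint quotient, combined with the Du Val classification of rational double point surface singularities.

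First I would set up the adjoint quotient $\chi\colon\mathfrak g\to\mathfrak h/W$, which is flat with central fiber $\mathcal N$. Since the Slodowy slice $S$ at a subregular $x$ has dimension $r+2$ (where $r=\mathrm{rk}\,\mathfrak g$, computed from the codimension of $\mathcal O_{sub}$ in $\mathcal N$), the restriction $\chi|_S\colon S\to\mathfrak h/W$ is a flat family of affine surfaces whose central fiber is $S_x\cap\mathcal N$. Transversality of $S$ to $\mathcal O_{sub}$ shows $S_x\cap\mathcal N$ is smooth away from $x$, so it is an isolated normal surface singularity.

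Next I would pull back along the Weyl cover $\mathfrak h\to\mathfrak h/W$ and apply the Grothendieck--Springer simultaneous resolution $\tilde{\mathfrak g}=\{(y,\mathfrak b):y\in\mathfrak b\}\to\mathfrak g$. Restricted to $S$, this yields a smooth family $\tilde S\to S\times_{\mathfrak h/W}\mathfrak h$, and specializing to $0\in\mathfrak h$ gives a proper birational morphism $\widetilde{S_x\cap\mathcal N}\to S_x\cap\mathcal N$ whose exceptional fiber is identified with the Springer fiber $\mathcal B_x$ over $x$. Since $S$ inherits a Poisson structure from the Kostant--Kirillov structure on $\mathfrak g$, this resolution is symplectic and in particular crepant.

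I would then invoke the classical description of the subregular Springer fiber in the simply-laced case: $\mathcal B_x$ is a tree of projective lines whose dual graph is the Dynkin diagram of $\mathfrak g$. By crepancy and the Gorenstein property of $S_x\cap\mathcal N$, each component is a $(-2)$-curve, so the exceptional divisor is an ADE configuration of $(-2)$-curves. The Du Val classification of rational double points then forces $S_x\cap\mathcal N\cong\mathbb C^2/\Gamma$ of the same Dynkin type, and since the minimal resolution of a normal surface singularity is unique, $\widetilde{S_x\cap\mathcal N}\to S_x\cap\mathcal N$ coincides with $\widetilde{\mathbb C^2/\Gamma}\to\mathbb C^2/\Gamma$.

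The principal obstacle is matching the dual graph of the subregular Springer fiber with the correct ADE diagram; this is the Lie-theoretic core of the statement. One can handle it uniformly via the Steinberg variety and the Weyl group action on $H^\bullet(\mathcal B_x)$, together with the fact that $\dim H^2(\mathcal B_x)$ equals the rank of $\mathfrak g$, or by direct computation in each ADE type using explicit subregular $\mathfrak{sl}_2$-triples. Once this identification is in place, the remainder is standard deformation theory and surface classification.
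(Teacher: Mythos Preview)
The paper does not prove this theorem; it is stated as a classical result and attributed to Brieskorn and Slodowy via the citations \cite{Br} and \cite{S}, with no argument given in the text. So there is no ``paper's own proof'' to compare against.

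That said, your outline is essentially the standard route taken in Slodowy's monograph: restrict the adjoint quotient to the slice, use the Grothendieck--Springer simultaneous resolution to obtain a crepant resolution whose exceptional locus is the subregular Springer fiber, identify that fiber as a Dynkin tree of $(-2)$-curves, and conclude by the Du Val classification. You have correctly isolated the nontrivial Lie-theoretic input, namely that the dual graph of $\mathcal B_x$ for subregular $x$ is the Dynkin diagram of $\mathfrak g$; this is precisely what Brieskorn announced and Slodowy worked out in detail. One small point worth tightening: the claim that the resolution $\widetilde{S_x\cap\mathcal N}\to S_x\cap\mathcal N$ is \emph{minimal} (not just crepant) follows because no exceptional component is a $(-1)$-curve, which you implicitly use but could state explicitly before invoking uniqueness of the minimal resolution.
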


\subsection{The $\mathbb C^\times$-equivariant
quantum cohomology}\label{subsect:Cstarequivcoho}

Suppose $T$ is a torus, $X$ is a variety with a $T$-action on it. The
$T$-equaivariant quantum cohomology ring $(\mathrm{QH}^{\bullet}_T(X), \star)$ of $X$ is a
deformation of $(\mathrm{H}^{\bullet}_T(X), \cup)$, which is induced by the genus 0
$T$-equivariant Gromov-Witten invariants of $X$.
More explicitly, the quantum product $\star$ is defined as follows (see e.g. \cite{KM}).
For any $e_\alpha, e_{\alpha'}, e_\gamma\in \mathrm{H}^{\bullet}_T(X)$
\begin{equation}\label{qproduct}
  (e_\alpha\star e_{\alpha'}, e_\gamma)= (e_\alpha\cup e_{\alpha'}, e_\gamma)+
  \sum_{\beta\in H_2(X, \Z)-\{0\}}\< e_\alpha,  e_{\alpha'}, e_\gamma\>^{X,T}_{0, \beta}\cdot q^{\beta}.
\end{equation}
Here $(-,-)$ is the Poincare pairing, $q$ is the quantum parameter and
$\< e_\alpha,  e_{\alpha'}, e_\gamma\>^{X,T}_{0, \beta}$ is some genus 0 $T$-equivariant Gromov-Witten invariant of $X$.

Now let $\widetilde{\mathbb C^2/\Gamma}$ be the minimal resolution
of an ADE singularity. Observe that the scalar $\mathbb C^\times$-action
on $\mathbb C^2$ commutes with the action of $\Gamma$,
and thus $\mathbb C^\times$ acts on $\mathbb C^2/\Gamma$.
It lifts to an action on $\widetilde{\mathbb C^2/\Gamma}$.
Let $\{E_1,E_2,\cdots, E_n\}$ be the set of irreducible components
in the exceptional fiber in
$\widetilde{\mathbb C^2/\Gamma}$, which gives a basis
of $\mathrm{H}_2(\widetilde{\mathbb C^2/\Gamma},\mathbb Z)$.
It is direct to see that they are invariant under the $\mathbb C^\times$-action,
and hence lifts to a basis $e_1, e_2, \cdots, e_n$ of the $\mathbb C^\times$-equivariant cohomology.
The intersection matrix $E_i\cap E_j$ defines a perfect pairing on
$\mathrm{H}_2(\widetilde{\mathbb C^2/\Gamma},\mathbb Z)$,
which coincides with the $\mathbb C^\times$-equivariant Poincare pairing $(e_i, e_j)$.

Let $\Delta$ be the root system associated
to the Dynkin diagram given by this pairing. Following \cite{BG}, we can identify both $E_1, \cdots, E_n$ and $e_1,\cdots,e_n$
with the simple roots $\alpha_1,\cdots,\alpha_n$ of ADE Lie algebra,
and the intersection matrix with minus of the Cartan matrix
$$E_i\cap E_j=(e_i, e_j)=-\langle \alpha_i,\alpha_j\rangle.$$
where  $\langle \alpha,\alpha'\rangle:=K(h_\alpha,h_{\alpha'})$ for
roots $\alpha,\alpha'\in\Delta$ and corresponding Cartan elements
$h_\alpha,h_{\alpha'}$ in $\mathfrak h$, $K(-,-)$ is the Killing form.
Bryan and Gholampour computed the $\mathbb C^\times$-equivariant
quantum cohomology ring
of $\widetilde{\mathbb C^2/\Gamma}$, which is given as follows.

\begin{theorem}[{\cite[Theorem 1]{BG}}]\label{thm:BryanGholam2}
For $e_\alpha$, $e_{\alpha'} \in \mathrm{QH}^\bullet(\widetilde{\mathbb C^2/\Gamma})$, the quantum product is given by
\begin{eqnarray}\label{eq: quantum coh DE}
e_\alpha\star e_{\alpha'}=-t^2|\Gamma|\langle \alpha ,\alpha'\rangle
+\sum_{\gamma\in\Delta^+}t\langle\alpha,\gamma\rangle\langle\alpha',
\gamma\rangle\dfrac{1+q^\gamma}{1-q^\gamma}  e_\gamma,
\end{eqnarray}
where $\Gamma$ is the subgroup of $\mathrm{SL}_2(\mathbb{C})$,
$e_\alpha=c_1e_1+\cdots+c_n e_n$, if $\alpha=c_1\alpha_1+\cdots c_n\alpha_n$, $c_1,\cdots, c_n\in\mathbb{N}$.
\end{theorem}

By the root data of Lie algebras of ADE type (see, for example,
Bourbaki \cite[PLATE I-VII]{Bo}), we may explicitly write down the cup product
in all the cases.

\subsection{The $(\mathbb C^\times)^2$-equivariant quantum cohomology of $A_n$ resolutions}\label{subsect:typeA}

In this subsection
we calculate the $(\mathbb C^\times)^2$-equivariant quantum cohomology of
the minimal resolution of the $A_n$ singularity.

\subsubsection{Equivariant cohomology}

We first calculate the equivariant cohomology.
The main reference we use here is \cite[Chapter 8]{AF}.

Let $\Gamma=\mathbb{Z}_{n+1}$ with
generator $\xi$. The finite group $\Gamma$ acts on $\mathbb C^2$ as
$$\xi \cdot (z_1, z_2)=\left(e^{\frac{2\pi i}{n+1}}z_1, e^{-\frac{2\pi i}{n+1}}z_2\right).$$
The $A_n$ singularity is given by $\mathbb C^2/\Gamma$.

Let $\mathcal{A}_n$ be the minimal resolution of $A_n$,
which is also called the
Hirzebruch-Jung resolution. By \cite{Ne}, $\mathcal{A}_n$ is a toric variety,
corresponding to a 2-dimensional fan $\Sigma$.
$\Sigma$ contains $n+1$ cones generated by the following $n+2$ rays
$\{\rho_i=\mathbb{C}v_i|0\leq i \leq n+1\}$ in $\R^2$, where $v_i$ is as follows:
\begin{equation}\label{fan}
	v_0=(1, 0), v_1=(0, 1), v_2=(-1,2), \cdots, v_{n+1}=(-n, n+1).
\end{equation}

Denote by $\langle\ ,\ \rangle_{\R^2}$ the canonical paring of $\R^2$.
Let $M\cong \Z^2$ be the character group of $(\mathbb{C}^\times)^2$.
Suppose $u_1=(1, 0)$ and $u_2=(0, 1)$ are basis of $M$.
By \cite{AF}, $u_1$ and $u_2$ can be treated as the equivariant
parameters of the torus action, which  corresponds to the regular
$(\mathbb{C}^\times)^2$-embedding in $\mathcal{A}_n$ given by the fan \eqref{fan}.

Set $\Lambda:=\mathrm H^{\bullet}_{(\mathbb{C}^\times)^2}(pt)=\mathbb{C}[u_1, u_2]$.
Let $X_0, X_1, \cdots, X_{n+1}$ be formal
variables, one for each ray of $\Sigma$.
The following $R_\Sigma$ is called the \textit{Stanley-Reisner ring}:
\begin{equation}\label{SR ring}
	R_\Sigma:=\Lambda[X_0, X_1, \cdots, X_{n+1}]/(I_\Sigma+J_\Sigma),
\end{equation}
where
\begin{itemize}
\item[$-$] the ideal $I_\Sigma$ is generated by all monomials $X_{i}\cdot X_{j}$
such that the corresponding rays $\rho_{i}, \rho_{j}$ do not span a cone,
that is, $|i-j|\geq 2$;

\item[$-$] the ideal $J_\Sigma$ is generated by the following two elements:
\begin{align}\label{SR ring-J}
	u_1-\sum \langle u_1, v_i \rangle_{\R^2} X_i, \quad u_2-\sum \langle u_2, v_i \rangle_{\R^2} X_i.
\end{align}
\end{itemize}
Denote the equivariant cohomology of
$\mathcal{A}_n$ by $\mathrm H^{\bullet}_{(\mathbb{C}^\times)^2}(\mathcal{A}_n)$.
There is a ring structure on it induced by the cup product $\cup$.
Define a homomorphism
\begin{equation}
	R_\Sigma\rightarrow \mathrm H^{\bullet}_{(\mathbb{C}^\times)^2}(\mathcal{A}_n)
\end{equation}
by $X_i\mapsto e_i$, where $e_i$ is the equivariant class of the
$(\mathbb{C}^\times)^2$-invariant divisor corresponding to the ray $\rho_i$.
The following proposition is proved in \cite[Theorem 8]{BCP} (see also \cite[Theorem 3.1]{AF}).

\begin{proposition}\label{An ring iso}
The homomorphism
$(R_\Sigma, \cdot)\rightarrow
\mathrm (\mathrm H^{\bullet}_{(\mathbb{C}^\times)^2}(\mathcal{A}_n), \cup)$
is a ring isomorphism.
\end{proposition}

According to \cite{Cox}, $\mathcal{A}_n$ can be viewed as a GIT quotient, namely,
\begin{equation*}
  \mathcal{A}_n\cong(\mathbb{C}^{n+2}-Z(\Sigma))/(\mathbb{C}^\times)^n,
\end{equation*}
where $Z(\Sigma)=\bigcap_{1\leq i \leq n+1}\{z_0\cdots \hat{z_i}\cdots z_{n+1}=0\}$, and
the $(\mathbb{C}^\times)^{n}$-action on $\mathbb{C}^{n+2}$ is as follows:
for any $(\lambda_1, \lambda_2, \cdots, \lambda_{n})\in (\mathbb{C}^\times)^{n}$,
$$(\lambda_1,  \cdots, \lambda_{n})\cdot (z_0, \cdots, z_{n+1})=(\lambda_1 z_0, \lambda_1^{-2}\lambda_2 z_1,
\lambda_1\lambda_2^{-2}\lambda_3z_2,  \cdots, \lambda_{n-1} \lambda_n^{-2} z_{n-1}, \lambda_n z_{n+1}).$$
Here we use the homogeneous coordinate $[z_0: z_1: \cdots : z_{n+1}]$ to
parametrize the $(\mathbb{C}^\times)^{n}$-orbit of $(z_0, \cdots, z_{n+1})$. Then the projection $\mathcal{A}_n\rightarrow \mathbb{C}^2/\Gamma$ can be written as
\begin{equation}\label{proj}
  [z_0: z_1: \cdots : z_{n+1}]\mapsto \left[\big(\prod_{i=0}^{n+1}z_i^{n+1-i}\big)^{1\over n+1}, 
  \big(\prod_{i=0}^{n+1}z_i^{i}\big)^{1\over n+1}\right].
\end{equation}
There is another natural $(\mathbb{C}^\times)^2$-action on $\mathcal{A}_n$: for $\eta\in \mathbb{C}^\times$,
\begin{equation}\label{C2action}
  \eta \cdot [z_0: z_1: \cdots z_n: z_{n+1}]=[\eta^{t_2} z_0: z_1: \cdots z_n: \eta^{t_1}z_{n+1}].
\end{equation}
This torus action is used to calculate equivariant Gromov-Witten invariants of
$\mathcal{A}_n$ in \cite{Mau}. We treat $t_1$, $t_2$ as equivariant parameters
of the above torus action, then $(u_1, u_2)$ is determined by $(t_1, t_2)$ as
following (see e.g \cite[Section 4.4]{Liu}):
\begin{equation}\label{coordinate change}
\begin{split}
u_1&=\<u_1, v_{n+1}\>_{\R^2}t_1+\<u_1, v_{0}\>_{\R^2}t_2=-nt_1+t_2,\\
 u_2&=\<u_2, v_{n+1}\>_{\R^2}t_1+\<u_2, v_{0}\>_{\R^2}t_2=(n+1)t_1.
\end{split}
\end{equation}
By Proposition \ref{An ring iso}, $\mathrm H^{\bullet}_{(\mathbb{C}^\times)^2}(\mathcal{A}_n)$ is a free
$\Lambda$-module generated by $1, e_1, e_2, \cdots, e_n$.

Recall that, for complex semisimple Lie algebra, there is an isomorphism
$\mathfrak h \rightarrow \mathfrak h^*$. In this section, for convenience to further discussion,
we identify $e_i$ with the $i$-th simple root $\alpha_i$ of $\mathfrak{sl}(n+1, \mathbb{C})$
and the cartan element $h_{\alpha_i}$, and identify the fundamental weights $\omega_i\in\mathfrak h^*$
with its linear dual $\omega_i^*\in\mathfrak h$. 	
Then the Cartan subalgebra $\mathfrak{h}={\rm Span}_\mathbb{C}\{e_1, e_2, \cdots, e_n\}$.
It is well know that $\mathfrak{h}$ can be embedded into $\mathbb{C}^{n+1}$.
And $e_i= \varepsilon_i-\varepsilon_{i+1}$, where $\{\varepsilon_j\}_{j=1, \cdots, n+1}$ is the canonical basis of
$\mathbb{C}^{n+1}$. Denote the euclidean pairing on $\mathbb{C}^{n+1}$ by $\<- ,- \>$,
of which restriction on $\mathfrak{h}$ is the Killing form.
\begin{equation}\label{euclidean}
  \<\varepsilon_i, \varepsilon_j\>=\delta_{ij}
\end{equation}

\begin{definition}\label{def: bilinear c}
Define a bilinear map
$(-,-)_c: \mathfrak{h}\otimes \mathfrak{h} \rightarrow \mathfrak{h}$ as follows: for
$\alpha=\sum_{i=1}^{n+1}x_i\varepsilon_i$, $\alpha'=\sum_{i=1}^{n+1}y_i\varepsilon_i$,
\begin{equation}\label{classical}
     (\alpha, \alpha')_c:=\sum_{i=1}^{n+1}x_iy_i(\omega_{i-1}-\omega_i),
\end{equation}
where $\omega_i$, $1\le i\le n$, is the $i$-th fundamental
weight satisfying $\<\omega_i , e_j\>=\delta_{ij}$,
and $\omega_0=\omega_{n+1}=0$.
\end{definition}

From the isomorphism of linear space: $\mathfrak h\cong \mathrm H^2_{(\mathbb{C}^\times)^2}(\mathcal{A}_n)$,
$(\alpha,\alpha')_c$ is in $H^2_{(\mathbb{C}^\times)^2}(\mathcal{A}_n)$, we rephrase the ring structure of
$\mathrm H^{\bullet}_{(\mathbb{C}^\times)^2}(\mathcal{A}_n)$ as follows.

\begin{theorem}\label{equ coh An}
$(\mathrm H^{\bullet}_{(\mathbb{C}^\times)^2}(\mathcal{A}_n), \cup)
\cong (\Lambda\otimes \mathrm{Sym}^\bullet\mathfrak{h})/
\widetilde{I}$, where $\widetilde{I}$ is generated by the following relation:
for any $e_\alpha, e_{\alpha'}\in \mathfrak{h}$,
\begin{align}\label{eqcup}
e_{\alpha} \cup e_{\alpha'}=-(n+1)\langle\alpha,\alpha'\rangle
t_1t_2-(n+1)\dfrac{t_1-t_2}{2}(\alpha,\alpha')_c+
\sum_{\gamma\in\Delta^+}\dfrac{t_1+t_2}{2}\langle\alpha,
\gamma\rangle\langle\alpha',\gamma\rangle e_\gamma,
\end{align}
where $\Delta^+=\{\varepsilon_i-\varepsilon_j|1\leq i< j \leq n+1\}$ is the set of positive roots.
\end{theorem}

\begin{proof}
Plugging \eqref{coordinate change} into \eqref{SR ring-J},
we have in $\mathrm H^{\bullet}_{(\mathbb{C}^\times)^2}(\mathcal{A}_n)$,
\begin{equation}\label{e0en+1}
	e_{n+1}=t_1-\frac{1}{n+1}\sum_{i=1}^n(ie_i), \quad e_0=t_2-\frac{1}{n+1}\sum_{i=1}^n(ie_{n+1-i}).
\end{equation}
Notice that the two handsides of \eqref{eqcup} are both commutative
$\Lambda$-bilinear map on $\mathfrak{h}$. It suffice to verify \eqref{eqcup}
on a basis of $\mathrm{Sym}^2 \mathfrak{h}$. And we choose a basis of
$\mathrm{Sym}^2 \mathfrak{h}$ which consists of the following four types of vectors:
\begin{enumerate}
\item[(a)] $e_ie_j, \quad 1\leq i<j\leq n, j-i\geq 2$;
\item[(b)] $(t_2-e_0)e_k,\quad 2\leq k\leq n$;
\item[(c)] $ (t_1-e_{n+1})e_l, \quad 1\leq l\leq n-1$;
\item[(d)] $(t_2-e_0)(t_1-e_{n+1})$.
\end{enumerate}
Now we check \eqref{eqcup} holds on these vectors. For type (a),
when $e_\alpha=e_i=\varepsilon_i-\varepsilon_{i+1}$,
$e_{\alpha'}=e_j=\varepsilon_j-\varepsilon_{j+1}$ in
\eqref{eqcup}, LHS vanishes by Proposition
\ref{An ring iso} and \eqref{SR ring}.
Notice that $\langle \alpha,\alpha'\rangle$ and $(\alpha,\alpha')_c$
vanish by $j-i>2$, so the first two terms in RHS vanish.
By \eqref{euclidean}, $\langle \alpha,\gamma\rangle\langle\alpha',
\gamma\rangle$ does not vanish only when $\gamma
=\varepsilon_i-\varepsilon_j$, $\varepsilon_i-\varepsilon_{j+1}$,
$\varepsilon_{i+1}-\varepsilon_j$ or $\varepsilon_{i+1}-\varepsilon_{j+1}$,
one can easily check the last term in RHS vanishes.
For type (d), set $\alpha=t_2-e_0=\frac{1}{n+1}(n\varepsilon_1-
\varepsilon_2-\cdots-\varepsilon_{n+1})=\omega_1$, $\alpha'=t_1-e_{n+1}
=\frac{1}{n+1}(\varepsilon_1+\cdots+\varepsilon_{n}-n\varepsilon_{n+1})=
\omega_{n}$,
then by Proposition \ref{An ring iso} and \eqref{SR ring},
$$\mathrm{LHS} =t_1t_2-t_1e_0-t_2e_{n+1}=-t_1t_2+t_1\omega_1+t_2\omega_2.$$
By \eqref{euclidean} and \eqref{classical}, we have
\begin{equation*}
\mbox{RHS}=-t_1t_2+\frac{t_1-t_2}{2}(\omega_1-\omega_n)+
\frac{t_1+t_2}{2}(\varepsilon_1-\varepsilon_{n+1})=-t_1t_2+t_1\omega_1+t_2\omega_2.
\end{equation*}
The verification for types (b) and (c) is left to readers.
\end{proof}

\subsubsection{Equivariant quantum cohomology}

In this subsection, we calculate the equivariant
quantum cohomology of $\mathcal{A}_n$.
The exceptional locus of $\mathcal{A}_n$ consists of a chain of $n$ rational curves
$E_1, \cdots, E_n$ with intersection matrix given by the minus
Cartan matrix for the $A_n$ root lattice.
Notice that $E_i$ is $(\mathbb{C}^\times)^2$-invariant, and $e_i$
is its equivariant lift in
$\mathrm{QH}^{2}_{(\mathbb{C}^\times)^2}(\mathcal{A}_n)$.
In \cite{Mau}, Maulik calculated all genus
$(\mathbb{C}^\times)^2$-equivariant Gromov-Witten invariant of
$\mathcal{A}_n$.
The following lemma is a special case of \cite[Theorem 1.1]{Mau}.

\begin{lemma}
The Gromov-Witten invariant does not vanish only when
$\beta=d(E_i+E_{i+1}+\cdots+E_{j-1})$, for some $1\leq i<j\leq n+1$,
$d\geq 0$. Furthermore, if  $\beta=d(E_i+E_{i+1}+\cdots+E_{j-1})$,
\begin{equation}\label{GWAn}
    \< e_\alpha,  e_{\alpha'}, e_\gamma\>^{\mathcal{A}_n,
    (\mathbb{C}^\times)^2}_{0, \beta}=(t_1+t_2)(e_\alpha, e_\eta)(e_{\alpha'}, e_\eta)(e_\gamma, e_\eta),
  \end{equation}
  where $e_\eta=e_i+e_{i+1}+\cdots+e_{j-1}$, $(-,-)$ is the
  Poincar\'e pairing satisfying $(e_k, e_l)=-\<\alpha_k, \alpha_l\>$.
\end{lemma}

\begin{remark}
In \cite[Theorem 1.1]{Mau},
Maulik calculated the reduced Gromov-Witten invariants of
$\mathcal{A}_n$. For $\beta\neq 0$, the reduced Gromov-Witten invariants coincides with
$(\mathbb{C}^\times)^2$-equivariant Gromov-Witten invariant up to a factor
$(t_1+t_2)$; see \cite[Section 2.2]{Mau}.
\end{remark}

Following \cite{BG}, we identify both $e_i$ and $E_i$ with the $i$-th
simple root $\alpha_i$ of $A_n$, then the Gromov-Witten
invariant $\< e_\alpha,  e_{\alpha'}, e_\gamma\>^{\mathcal{A}_n,
(\mathbb{C}^\times)^2}_{0, \beta}$ does not vanish
only when $\beta$ is a multiple of some positive root.

\begin{theorem}\label{thm:quantcohofA_n}
For any $e_\alpha, e_{\alpha'}\in
\mathrm{QH}^{2}_{(\mathbb{C}^\times)^2}(\mathcal{A}_n)$, we have
  \begin{equation}\label{qcohAn}
    e_\alpha\star e_{\alpha'}=-(n+1)\langle\alpha,
    \alpha'\rangle t_1t_2-(n+1)\dfrac{t_1-t_2}{2}(\alpha,\alpha')_c
    +\sum_{\gamma\in\Delta^+}\frac{(t_1+t_2)}{2}\langle\alpha,
    \gamma\rangle\langle\alpha',\gamma\rangle \frac{1+q^\gamma}{1-q^\gamma} e_\gamma.
  \end{equation}
\end{theorem}

\begin{proof}
By \eqref{qproduct} and \eqref{GWAn}, we have
  \begin{align}\label{eq14}
    e_\alpha\star e_\alpha'&=e_\alpha\cup e_\alpha'+
     \sum_{d>0}\sum_{\gamma\in\Delta^+}(t_1+t_2)\langle\alpha,\gamma\rangle\langle\alpha',
     \gamma\rangle e_\gamma \cdot q^{d\gamma}\nonumber\\
     &=e_\alpha\cup e_\alpha'+\sum_{\gamma\in\Delta^+}(t_1+t_2)
     \langle\alpha,\gamma\rangle\langle\alpha',\gamma\rangle \frac{q^\gamma}{1-q^\gamma}e_\gamma.
  \end{align}
Now combining \eqref{eqcup} with \eqref{eq14}, we obtain \eqref{qcohAn}.
\end{proof}

\begin{remark}
  In Theorem \ref{thm:BryanGholam2}, we consider the conical action on
  $\widetilde{\mathbb C^2/\Gamma}$ induced by scalar $\mathbb C^\times$-action
on $\mathbb C^2$. And in Theorem \ref{thm:quantcohofA_n},
$t_1, t_2$ corresponds to the following $(\mathbb C^\times)^2$-action on
$\mathbb C^2$ (see \eqref{proj} and \eqref{C2action})
$$\eta\cdot (z_1, z_2)=(\eta^{t_2}z_1, \eta^{t_1}z_2),\quad \eta\in \mathbb C^\times.$$
So the transformation between two equivariant parameter is $t_1=t, t_2=t$.
It is straightforward
to check that in \eqref{qcohAn} if we set $t_1=t_2$, then the quantum products
for $A_n$ singularities are identical to the ones given by \eqref{eq: quantum coh DE} .
\end{remark}

\begin{remark}\label{rem:redofequivquantcoh}
In both Theorems \ref{thm:BryanGholam2} and
\ref{thm:quantcohofA_n},
if we let $q=0$, then the equivariant quantum
cohomology reduces to the usual equivariant cohomology.
\end{remark}

\section{Quantization of the minimal nilpotent orbits}\label{sect:quantization}

In this section, we study the quantization of the minimal nilpotent
orbits of Lie algebras of ADE type.
In \cite{Jo}, Joseph studied
the quantizations of these orbits,
which are given by the quotients of the universal enveloping
algebras by the two-sided ideals called the {\it Joseph ideals}.
In her Ph.D. thesis \cite{Ga}, Garfinkle gave a new construction of the Joseph ideals,
by explicitly writing down the generators and relations.

In \S\ref{subsect:coordofminimal} we briefly go over Shlykov's result on
the minimal nilpotent orbits. In \S\ref{subsect:Joseph} we recall Joseph's result on the quantization
of the minimal nilpotent orbits and then in \S\ref{subsect:Garfinkle} we go over Garfinkle's construction
of Joseph's ideals.
In \S\ref{sect:B-algebra} we briefly
recall the $B$-algebra of the quantization
of the minimal nilpotent orbits.

\subsection{The coordinate ring of minimal orbits}\label{subsect:coordofminimal}

In this subsection, we assume $\mathfrak g$ is a complex semisimple Lie algebra,
and $\mathcal O_{min}$ is the minimal nilpotent orbit of $\mathfrak g$.
Let us first recall the following.

\begin{proposition}[{c.f. \cite[\S 8.3]{Jan}}]
Let $\mathfrak g$ be a complex semisimple Lie algebra
and $\mathcal{O}$ be a nilpotent orbit of $\mathfrak g$. Then
\[
\mathbb{C}[\mathcal{O}]=\mathbb{C}[\overline{\mathcal{O}}]
\]
if and only if $\overline{\mathcal{O}}$ is normal.
\end{proposition}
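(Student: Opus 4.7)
The plan is to reduce both directions to the statement that the boundary $\partial\mathcal{O} := \overline{\mathcal{O}}\setminus\mathcal{O}$ is small (codimension $\geq 2$) combined with the fact that the orbit $\mathcal{O}$ itself, being a homogeneous space of a connected algebraic group, is smooth and hence normal. First, I would recall/invoke the classical codimension estimate: any nilpotent orbit carries the Kirillov--Kostant--Souriau symplectic form, so it is even-dimensional, and the boundary of $\mathcal{O}$ inside $\overline{\mathcal{O}}$ is a finite union of strictly smaller nilpotent orbits, each of even dimension; consequently $\dim\partial\mathcal{O}\leq \dim\mathcal{O}-2$. Since $\overline{\mathcal{O}}$ is irreducible (it is the closure of an irreducible homogeneous space), $\mathbb{C}[\mathcal{O}]$ and $\mathbb{C}[\overline{\mathcal{O}}]$ both embed into the common field of rational functions on $\overline{\mathcal{O}}$, so the comparison in the statement makes sense.

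For the direction ($\Leftarrow$), assume $\overline{\mathcal{O}}$ is normal. Then Hartogs' extension for normal varieties (any regular function on the complement of a codimension $\geq 2$ closed subset extends) applies to $\mathcal{O}\subset\overline{\mathcal{O}}$ by the codimension fact, giving $\mathbb{C}[\mathcal{O}]=\mathbb{C}[\overline{\mathcal{O}}]$.

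For the direction ($\Rightarrow$), let $\pi\colon Y\to \overline{\mathcal{O}}$ denote the normalization. The map $\pi$ is finite and birational, and it is an isomorphism over the smooth locus, which contains $\mathcal{O}$; in particular, $\pi^{-1}(\mathcal{O})\cong\mathcal{O}$. Because $\pi$ is finite, $\dim\pi^{-1}(\partial\mathcal{O})=\dim\partial\mathcal{O}$, so $Y\setminus\pi^{-1}(\mathcal{O})$ still has codimension $\geq 2$ in the normal variety $Y$. Applying Hartogs to $Y$ yields $\mathbb{C}[Y]=\mathbb{C}[\pi^{-1}(\mathcal{O})]=\mathbb{C}[\mathcal{O}]$. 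Combined with the standing hypothesis $\mathbb{C}[\mathcal{O}]=\mathbb{C}[\overline{\mathcal{O}}]$, this gives $\mathbb{C}[\overline{\mathcal{O}}]=\mathbb{C}[Y]$; since both $\overline{\mathcal{O}}$ and $Y$ are affine, $\pi$ is an isomorphism and $\overline{\mathcal{O}}$ is normal.

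The only real input beyond formal commutative algebra is the codimension-two boundary condition, which is a standard consequence of the symplectic (hence even-dimensional) nature of nilpotent coadjoint orbits; I expect this to be the main point the reader is asked to accept, and everything else is a routine application of Hartogs' theorem and the universal property of normalization.
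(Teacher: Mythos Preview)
Your argument is correct and is essentially the standard proof. Note, however, that the paper does not supply its own proof of this proposition: it is stated with a reference to Jantzen \cite[\S 8.3]{Jan} and used as a black box, so there is no in-paper argument to compare against. Your write-up reproduces exactly the reasoning one finds in that reference: the key geometric input is that $\partial\mathcal{O}$ has codimension $\geq 2$ (even-dimensionality of nilpotent orbits via the KKS form), after which the $(\Leftarrow)$ direction is Hartogs on the normal variety $\overline{\mathcal{O}}$, and the $(\Rightarrow)$ direction is Hartogs on the normalization $Y$ together with the affineness of $\overline{\mathcal{O}}$ (as a closed subvariety of $\mathfrak{g}$) to conclude that $\mathbb{C}[Y]=\mathbb{C}[\overline{\mathcal{O}}]$ forces $\pi$ to be an isomorphism. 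Nothing is missing.
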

In particular, $\overline{\mathcal{O}}_{min}$ is normal with isolated singularity (see \cite{VP}), and hence
\[
\mathbb{C}[\mathcal{O}_{min}]=\mathbb{C}[\overline{\mathcal{O}}_{min}].
\]
Due to this proposition, in what follows we shall not distinguish
$\mathbb{C}[\mathcal{O}_{min}]$ and $\mathbb{C}[\overline{\mathcal{O}}_{min}]$.
The following result is proved by Shlykov in \cite{Sh}.

\begin{theorem}[{\cite[Theorem 2.2]{Sh}}] \label{Sh2}
Let $I$ be the defining ideal of $\overline{\mathcal{O}}_{min}$ in $\mathrm{Sym}(\mathfrak{g})$, i.e.,
	\[
	I:=\{\mu\in \mathrm{Sym}(\mathfrak{g})|\mu(\overline{\mathcal{O}}_{min})=0\},
	\]
then its image of the projection
$$
f: \mathrm{Sym}(\mathfrak{g})\rightarrow  \mathrm{Sym}(\mathfrak{h})
$$
induced by the inclusion $\mathfrak{h}^*\hookrightarrow \mathfrak{g}^*$
is given by  $\mathrm{Sym}^{\geq 2}\mathfrak{h}$. 	
\end{theorem}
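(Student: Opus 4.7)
I would prove the equality $f(I)=\mathrm{Sym}^{\geq 2}\mathfrak h$ via two inclusions. The containment $f(I)\subseteq\mathrm{Sym}^{\geq 2}\mathfrak h$ is straightforward: the orbit closure $\overline{\mathcal O}_{min}$ contains the origin and linearly spans $\mathfrak g^*$ (being $G$-stable and containing a root vector for every root), so $I\subseteq\mathrm{Sym}^{\geq 2}(\mathfrak g)$. Since $f$ is the graded ring homomorphism induced by the splitting $\mathfrak g=\mathfrak h\oplus(\mathfrak n_+\oplus\mathfrak n_-)$, it carries $\mathrm{Sym}^{\geq 2}(\mathfrak g)$ into $\mathrm{Sym}^{\geq 2}(\mathfrak h)$.

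For the reverse inclusion, note that $f$ is a surjective graded ring homomorphism (with kernel generated by $\mathfrak n_+\oplus\mathfrak n_-$), so $f(I)$ is a graded ideal in $\mathrm{Sym}(\mathfrak h)$. Hence it suffices to prove $\mathrm{Sym}^2\mathfrak h\subseteq f(I_2)$, since the ideal generated in $\mathrm{Sym}(\mathfrak h)$ by these quadratic forms is $\mathrm{Sym}^{\geq 2}\mathfrak h$.

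To establish $\mathrm{Sym}^2\mathfrak h\subseteq f(I_2)$, I would use the $G$-module decomposition $\mathrm{Sym}^2(\mathfrak g)=V(2\theta)\oplus I_2$, where $V(2\theta)$ is the Cartan component and $I_2$ is its canonical complement. Writing the weight-zero part of $\mathrm{Sym}^2\mathfrak g$ as
\[
(\mathrm{Sym}^2\mathfrak g)_0 \;=\; \mathrm{Sym}^2\mathfrak h\;\oplus\;\bigoplus_{\alpha\in\Phi^+}\mathbb C\cdot e_\alpha f_\alpha,
\]
the desired inclusion reduces to the surjectivity of the projection $\pi\colon(I_2)_0\to\mathrm{Sym}^2\mathfrak h$. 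To produce enough weight-zero elements of $I_2$, I would combine: (i)~the Casimir $\Omega=\sum_i h_i h^i+2\sum_{\alpha>0}e_\alpha f_\alpha\in V(0)\subset I_2$, whose projection is the Killing quadratic $\sum_i h_i h^i$; and (ii)~weight-zero vectors inside every other irreducible $G$-submodule of $I_2$, obtained by applying sequences of lowering operators $f_{\alpha_{i_1}}\cdots f_{\alpha_{i_r}}$ to the explicit highest-weight vectors of these submodules inside $\mathrm{Sym}^2\mathfrak g$, then reading off the $\mathrm{Sym}^2\mathfrak h$-components.

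The main obstacle is that the $G$-module decomposition of $\mathrm{Sym}^2\mathfrak g$ (and hence of $I_2$) depends on the type of $\mathfrak g$: for instance $V(\theta)\subset I_2$ when $\mathfrak g$ is of type $A_n$ with $n\geq 2$, whereas for $D_n$ and $E_n$ different irreducibles appear. Verifying that the projections collected in (i) and (ii) span $\mathrm{Sym}^2\mathfrak h$ is therefore a finite case analysis over the ADE types, relying on the explicit identification of $I_2$ due to Joseph \cite{Jo} and Garfinkle \cite{Ga}; this is carried out in Shlykov \cite{Sh}.
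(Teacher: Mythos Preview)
The paper does not give its own proof of this theorem; it is quoted from Shlykov \cite{Sh} and used as input. Your outline is correct and follows essentially the same route as Shlykov: reduce to degree two via the fact that $f(I)$ is a graded ideal, identify $f(I_2)$ with the image of $(I_2)_0$ under the projection $(\mathrm{Sym}^2\mathfrak g)_0\to\mathrm{Sym}^2\mathfrak h$ killing the $e_\alpha f_\alpha$ terms, and then verify surjectivity type by type using the explicit decomposition of $I_2$ as a $\mathfrak g$-module.

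One remark worth making is that the paper, while not proving Theorem~\ref{Sh2} directly, later supplies exactly the ingredients your final step needs. Proposition~\ref{Prop:I^2_0} establishes $\dim(I^2_0)=\dim(\mathrm{Sym}^2\mathfrak h)$ for each ADE type, so surjectivity of $\pi\colon (I_2)_0\to\mathrm{Sym}^2\mathfrak h$ is equivalent to injectivity; and the explicit elements of $J_0\cap\mathcal U^2$ constructed in \S\ref{subsection type A}--\S\ref{subsection En case} have, upon passing to the associated graded, $\mathrm{Sym}^2\mathfrak h$-components equal to the monomials $h_ih_j$, which visibly span $\mathrm{Sym}^2\mathfrak h$. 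So the case analysis you defer to \cite{Sh} is in fact reproduced (in quantized form) inside this paper.
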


Let $G$ be the corresponding Lie group of $\mathfrak g$. Then the adjoint action
$Ad: G\times\overline{\mathcal{O}}_{min}\to \overline{\mathcal{O}}_{min}
$
is Hamiltonian. Let $T$ be the maximal torus of $G$.
If we choose a generic action of $\mathbb{C}^\times$ on
$\overline{\mathcal{O}}_{min}$ such that the fixed point schemes for it and
for $T$ are the same, that is
$\overline{\mathcal{O}}_{min}^{\mathbb{C}^\times}\cong\mathfrak{h}\cap\overline{\mathcal{O}}_{min}$
as a scheme, then
the main result of Shlykov \cite{Sh} says
\begin{equation*}
\mathbb{C}[\overline{\mathcal{O}}_{min}^{\ \mathbb{C}^\times}]
\cong\mathbb{C}[\mathfrak{h}\cap
\overline{\mathcal{O}}_{min}]=\mathrm{Sym}(\mathfrak{h})/f(I)
=\mathrm{Sym}(\mathfrak{h})/\mathrm{Sym}^{\geq 2}\mathfrak{h}
\end{equation*}
is isomorphic to $\mathrm H^\bullet(\widetilde{\mathbb C^2/\Gamma})$,
where $\mathbb C^2/\Gamma$ is the Kleinian singularity with
the same type of $\mathfrak g$.

\subsection{Quantization of the minimal nilpotent orbits}\label{subsect:Joseph}

We now study the quantization of the minimal nilpotent orbits in Lie algebra of ADE type.	
We start with some basic concepts on the quantization of Poisson algebras;
see, for example, Losev \cite{Lo} for more details.

\begin{definition}[Filtered and graded quantizations]
Suppose $A$ is a commutative $\mathbb Z_{\ge 0}$-graded $k$-algebra,
equipped with a Poisson
bracket whose degree is $-1$, where $k$ is a field of characteristic zero.
\begin{enumerate}
\item[$(1)$]
A {\it filtered quantization} of $A$ is a filtered $k$-algebra
$\mathcal A=\bigcup_{i\ge 0}\mathcal A_{i}$ such that
the associated graded algebra $\mathrm{gr}\,\mathcal A$
is isomorphic to $A$ as graded Poisson algebras.

\item[$(2)$]
A {\it graded quantization} of $A$ is a graded $k[\hbar]$-algebra
$A_\hbar$ ($\mathrm{deg}\,\hbar=1$)
which is free as a $k[\hbar]$-module, equipped with an isomorphism of $k$-algebras:
$f: A_\hbar/\hbar\cdot A_\hbar \to A$
such that for any
$a, b\in A_\hbar$, if we denote their images in
$A_\hbar/\hbar\cdot A_\hbar$ by $\overline a, \overline b$ respectively,
then
$$f\left(\overline{\frac{1}{\hbar}[a, b]}\right)=\{f(\overline a), f(\overline b)\}.$$
\end{enumerate}
\end{definition}

Let $A$ be an filtered associative algebra. Recall that the {\it Rees algebra} of $A$ is the
graded algebra $Rees(A):=\bigoplus_{i\in\mathbb{Z}}A_{\leq i}\cdot\hbar^i$,
equipped with the multiplication $(a\hbar^i)(b\hbar^j)=ab\hbar^{i+j}$ for $a,b\in A$.
Now, suppose $\mathcal A$ is a filtered quantization of $A$, then
the associated Rees algebra $Rees(\mathcal A)$ is a graded quantization of $A$.

\begin{example}\label{example: quantization}
The universal enveloping algebra $\mathcal{U}(\mathfrak{g})$ is the filtered quantization of
$\mathbb{C}[\mathfrak{g}^*]=\mathrm{Sym}(\mathfrak{g})$, and the Rees algebra of
$\mathcal{U}(\mathfrak{g})$,
$Rees(\mathcal{U}(\mathfrak{g})):=
\bigoplus_{i\in\mathbb{Z}}\mathcal{U}(\mathfrak{g})_{\leq i}\cdot\hbar^i$
is the graded quantization of  $\mathrm{Sym}(\mathfrak{g})$.
On the other hand, there is an isomorphism of
$\mathfrak{g}$-modules:
\begin{eqnarray*}
  \beta: \mathrm{Sym}(\mathfrak{g})&\rightarrow & \mathcal{U}(\mathfrak{g}),\\
  	x_1\cdots x_k &\mapsto & \dfrac{1}{k!}\sum_{\pi\in S_n} x_{\pi(1)}\cdots x_{\pi(k)},
 \end{eqnarray*}
which is called {\it symmetrization}.
\end{example}

Since the universal enveloping algebra $\mathcal{U}(\mathfrak{g})$ is
the quantization of the symmetric algebra $\mathrm{Sym}(\mathfrak{g})$,
we need to study the quantization of the ideal $I$ of $\mathrm{Sym}(\mathfrak{g})$.
Joseph in \cite{Jo} found a two-sided ideal of $\mathcal{U}(\mathfrak{g})$
which plays the role of the quantization of $I$.

\subsubsection{Joseph's quantization of the minimal nilpotent orbits}

Let us first recall the result of Joseph \cite{Jo},
which is stated as follows.

\begin{theorem}[Joseph \cite{Jo} and Garfinkle \cite{Ga}]
Let $\mathfrak{g}$ be a complex semisimple Lie algebra.
 \begin{enumerate}
 \item[$(1)$]
If $\mathfrak{g}$ is the type A Lie algebra, then there exists a family of
completely prime two-sided primitive ideals $J^z$, parametrized
by $z\in\mathbb{C}$, such that
$$
\mathrm{gr}J^z=I(\overline{\mathcal{O}}_{min}).
$$

\item[$(2)$]	
If $\mathfrak{g}$ is not of type A,
 then there exists a unique completely prime two-sided primitive ideal $J$ such that
 $$
\mathrm{gr}J=I(\overline{\mathcal{O}}_{min}).
 $$
 \end{enumerate}
 \end{theorem}

In the above theorem,
a two-sided ideal $J$ of $\mathcal{U}(\mathfrak{g})$ is called
{\it primitive} if it is
the kernel of an irreducible representation $(\pi, V)$ of $\mathcal{U}(\mathfrak{g})$, i.e.,
$J$ is the annihilator of $V$,
	\[
	J=Ann(V)=\{u\in\mathcal{U}(\mathfrak{g})|\pi(u)\cdot V=0\}.
	\]
An ideal $J$ of $\mathcal{U}(\mathfrak{g})$ is called
{\it completely prime} if for all $u,v\in\mathcal{U}(\mathfrak{g})$, $uv\in J$
implies $u\in J$ or $v\in J$.
In literature, the ideals $J^z$ and $J$ are usually
called the {\it Joseph ideals}.

In fact, in the original paper \cite{Jo}, Joseph proved that the Joseph ideals in type A Lie algebras
are not unique. It is Garfinkle who
 gave the explicit constructions of the Joseph ideals in Lie algebras of all types,
and in particular, formulated the Joseph ideals in type A Lie algebras in the form given in the above
theorem.

Since
\[
\mathrm{gr} \big(\mathcal{U}(\mathfrak{g})/J\big)=\mathrm{gr}\big(\mathcal{U}(\mathfrak{g})\big)/gr(J)
=\mathrm{Sym}(\mathfrak{g})/I(\overline{\mathcal{O}}_{min})=\mathbb{C}[\overline{\mathcal{O}}_{min}],
\]
we have that, for the symplectic singularity $\overline{\mathcal{O}}_{min}$, the algebra
$\mathcal{U}(\mathfrak{g})/J$ is its filtered quantization.

By the above theorem, $Rees(\mathcal{U}(\mathfrak{g})/J)$ is the graded quantization of
$\overline{\mathcal{O}}_{min}$, and we sometimes write it
as $\mathscr A[\overline{\mathcal O}_{min}]$; that is,
$\mathscr A[\overline{\mathcal O}_{min}]=Rees\big(\mathcal U(\mathfrak g)/J\big)$.

\subsection{Garfinkle's construction of the Joseph ideals}\label{subsect:Garfinkle}

Garfinkle in her thesis \cite{Ga} gave an explicit construction of the Joseph ideals.
In this subsection, we go over her results with some details.

 \begin{notation}\label{notation}
Let us fix some notations in representation theory of Lie algebras.

Let $\mathfrak{g}$ be a complex semisimple Lie algebra, $\mathfrak{h}$ be a
Cartan subalgebra of $\mathfrak{g}$, $\Delta$ be the set of roots of $\mathfrak{h}$ in
$\mathfrak{g}$ and $\Delta^+$ be a fixed choice of positive roots.
Let $\Pi\subset \Delta^+$ be the set of the simple roots of $\mathfrak g$ and $Q:=\mathbb{Z}\Pi$ is the root lattice of $\mathfrak g$.
The Lie algebra $\mathfrak{g}$ has the root space decomposition
$\mathfrak{g}=\oplus_{\alpha\in\Delta}\mathfrak{g}_\alpha$, and let
\[
\mathfrak{n}^+=\oplus_{\alpha\in\Delta^+}\mathfrak{g}_\alpha,
\mathfrak{n}^-=\oplus_{\alpha\in\Delta^+}\mathfrak{g}_{-\alpha},
\mathfrak{b}=\mathfrak{h}\oplus\mathfrak{n}^+.
\]
denote the associated subalgebras of $\mathfrak{g}$.

Let $(\pi, V)$ be a representation of $\mathfrak{g}$; for any weight
$\lambda\in\mathfrak{h}^*$, let
$V^\lambda=\{v\in V|\pi(h)(v)=\lambda(h) v\;\mbox{for any}\; h\in\mathfrak{h}\}$. Let
$V^{\mathfrak{n}^-}:= \{v\in V|\pi(x)v=0\;\mbox{for any}\; x\in\mathfrak{n}^-\}$.

For any $\alpha\in \Delta^+$, fix a root vector $X_{\alpha}$ in $\mathfrak{g}_{\alpha}$, and denote by $Y_\alpha\in \mathfrak{g}_{-\alpha}$ the dual basis
of $X_{\alpha}$ with respect to the Killing form $K(-,-)$.  Denote by $h_i$ the element in
$\mathfrak{h}$ corresponding to $\alpha_i\in \Pi$ such that $\alpha_i(H)=K(H,h_i)$
for all $H\in\mathfrak{h}$. By the construction of
the Chevalley basis,
$h_i=[X_{\alpha_i}, Y_{\alpha_i}]$. Denote by $h_i^{\vee}$ the dual element of $h_i$
via the Killing form, i.e., $K(h_i^{\vee}, h_j)=\delta_{ij}$.

Let $C:=\sum_{\alpha\in\Delta^+} (X_\alpha Y_\alpha+Y_\alpha X_\alpha)
+\sum_{i=1}^n h_ih_i^{\vee}$ be the Casimir element of $\mathcal{U}(\g)$,
$n=\mathrm{rank}(\mathfrak h)$.

Let $\alpha_1,\cdots,\alpha_n\in\Pi$ with the subscripts the same as
\cite[PLATE I-VII]{Bo}). Denote by $\theta$ the highest root in $\Delta$.

\end{notation}

\subsubsection{Joseph ideal for type A Lie algebras}\label{subsect:JosephintypeA}

In \cite{Ga}, Garfinkle gave the explicit construction
of the Joseph's ideals. Let us recall her results.

\begin{proposition}[{\cite[Proposition 3.2]{Ga} and
\cite[\S4.4]{BJ}}]\label{structure of J}

For type $A_n$ Lie algebras $\mathfrak{g}$, we have the following decomposition of
irreducible representations:
\begin{eqnarray*}
\mathrm{Sym}^2(\mathfrak{g})\cong
V(2\theta)\oplus V(\theta+\alpha_2+\cdots+\alpha_{n-1})\oplus V(\theta)\oplus V(0).
\end{eqnarray*}
The ideal $I(\overline{\mathcal{O}}_{min})$
is generated by the lowest weight
vectors in $V(\theta+\alpha_2+\cdots+\alpha_{n-1})$, $V(\theta)$ and $V(0)$, where $V(0)$ is spanned by the
Casimir element $C$ of $\mathcal U(\mathfrak g)$.
\end{proposition}

Garfinkle showed that the Joseph ideal $J$ in the type A case
is generated by elements corresponding
to the three types of lowest weight vectors in the above proposition.
We examine them one by one.	

First, for the subrepresentation $V(\theta+\alpha_2+\cdots+\alpha_{n-1})$,
we have the following:

\begin{lemma}[\cite{Ga} \S IV.3 Theorem 2 and \S 5]\label{Garfinkle lemma1}
Let $v_0$ be the lowest weight of the representation
$V(\theta+\alpha_2+\cdots+\alpha_{n-1})$. Then
$\beta(v_0)$ is an
element of Joseph ideal $J$ of $\mathcal{U}(\mathfrak{g})$.
\end{lemma}

Next, we consider the lowest weight vector in $V(\theta)$.
For convenience, we fix a special choice of root vectors $X_\alpha$ and $Y_\alpha$ via elements in $gl(n+1, \mathbb{C})$. More explicitly, we set $X_{i\cdots j}=E_{i,j+1}$
and $Y_{i\cdots j}=E_{j+1,i}$ the root vectors in $\mathfrak{g}$ corresponding to the root
$\alpha_i+\alpha_{i+1}+\cdots+\alpha_j\in \Delta^+$ and the root
$-(\alpha_i+\alpha_{i+1}+\cdots+\alpha_j)$ respectively, where $E_{ij}\in\mathrm{M}_{(n+1)\times(n+1)}(\mathbb C)$.

\begin{lemma}\label{generalterm1}
The lowest weight vector of the subrepresentation $V(\theta)$ in Proposition \ref{structure of J} is
\begin{eqnarray}\label{vtheta}
v=-(n+1)(Y_1Y_{2\cdots n}+Y_{12}Y_{3\cdots n}+\cdots+Y_{1\cdots n-1}Y_n)+
\sum_{k=1}^n Y_\theta(2k-1-n)h_k.
\end{eqnarray}
\end{lemma}

\begin{proof}
It is straightforward to verify $[Y_{\alpha_i},v]=0$ for all $\alpha_i\in\Pi$,
and thus $v$ is the lowest weight vector.
\end{proof}

We next find the generator of $J$ corresponding to \eqref{vtheta}.
Recall that a subalgebra $\mathfrak{p}\subseteq\mathfrak{g}$ such that
$\mathfrak{p}\supseteq \mathfrak{b}$ is called a parabolic subalgebra.
Let $\Pi'\subset \Pi$, we define a parabolic subalgebra as follows:
Let $\Delta_\mathfrak{l}=\{\gamma\in\Delta|\gamma=\sum_{\alpha\in\Pi'}n_\alpha\alpha,
n_\alpha\in\mathbb{Z}\}$, $\Delta_{\mathfrak{u}^+}=\{\alpha\in\Delta^+|\alpha\notin\Delta_l \}$.
Then, let
$\mathfrak{l}=\mathfrak{h}\oplus\oplus_{\alpha\in\Delta_l}\mathfrak{g}_\alpha$, $\mathfrak{u}
=\oplus_{\alpha\in\Delta_{\mathfrak{u}^+}}\mathfrak{g}_\alpha$. We call $\mathfrak{p}
=\mathfrak{l}\oplus\mathfrak{u}$
the parabolic subalgebra defined by $\Pi'$.
The following lemma is straightforward.

\begin{lemma}\label{prop:lambda}
Let $\mathfrak g$ be a complex semisimple Lie algebra, and
$\mathfrak{p}$ be a parabolic subalgebra defined by $\Pi-\{\alpha_n\}$.
Suppose
$\lambda\in\mathfrak{h}^*$. Then the following two conditions are equivalent:
\begin{enumerate}
\item[$(1)$]
$\lambda$ can be extended to a character on $\mathfrak{p}$, i.e.,
$\lambda|_{[\mathfrak{p},\mathfrak{p}]}=0, \lambda|_\mathfrak{h}=\lambda$;
\item[$(2)$]
there exists a complex number
$z\in\mathbb C$ such that
$\lambda (h_n)=z$, while $\lambda (h_1)=\cdots=\lambda (h_{n-1})=0.$
\end{enumerate}
\end{lemma}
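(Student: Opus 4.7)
The plan is to reduce the statement to a direct analysis of the derived subalgebra $[\mathfrak{p},\mathfrak{p}]$, using the fact that a character (a Lie algebra homomorphism to the abelian Lie algebra $\mathbb{C}$) is precisely a linear functional on $\mathfrak{p}$ that annihilates $[\mathfrak{p},\mathfrak{p}]$.

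First, I would use the Levi decomposition $\mathfrak{p}=\mathfrak{l}\oplus\mathfrak{u}$ introduced in the excerpt, where $\mathfrak{l}=\mathfrak{h}\oplus\bigoplus_{\alpha\in\Delta_{\mathfrak{l}}}\mathfrak{g}^{\alpha}$ is reductive with root system generated by $\alpha_{1},\ldots,\alpha_{n-1}$, and $\mathfrak{u}=\bigoplus_{\alpha\in\Delta_{\mathfrak{u}^{+}}}\mathfrak{g}^{\alpha}$ is the nilpotent radical. The main structural step is to identify
$$[\mathfrak{p},\mathfrak{p}]=[\mathfrak{l},\mathfrak{l}]+\mathfrak{u}.$$
The inclusion $\supseteq$ follows because $[\mathfrak{l},\mathfrak{l}]\subseteq[\mathfrak{p},\mathfrak{p}]$, and because every root vector $X_{\alpha}$ with $\alpha\in\Delta_{\mathfrak{u}^{+}}$ lies in $[\mathfrak{h},\mathfrak{g}^{\alpha}]\subseteq[\mathfrak{p},\mathfrak{p}]$ (as $\alpha\neq 0$, some $h\in\mathfrak{h}$ satisfies $\alpha(h)\neq 0$). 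The inclusion $\subseteq$ is immediate from $[\mathfrak{l},\mathfrak{u}]\subseteq\mathfrak{u}$ and $[\mathfrak{u},\mathfrak{u}]\subseteq\mathfrak{u}$.

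Next, since $[\mathfrak{l},\mathfrak{l}]$ is the semisimple part of the reductive algebra $\mathfrak{l}$, generated by the Chevalley triples associated to $\alpha_{1},\ldots,\alpha_{n-1}$, we obtain
$$[\mathfrak{l},\mathfrak{l}]\cap\mathfrak{h}=\mathrm{span}_{\mathbb{C}}\{h_{1},\ldots,h_{n-1}\}.$$
Because no commutator of root vectors in $\mathfrak{u}$ lands in $\mathfrak{h}$ (such brackets are either in a nonzero root space or vanish), we get $[\mathfrak{p},\mathfrak{p}]\cap\mathfrak{h}=\mathrm{span}_{\mathbb{C}}\{h_{1},\ldots,h_{n-1}\}$. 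The condition that $\lambda$ extends to a character on $\mathfrak{p}$ is then equivalent to $\lambda(h_{i})=0$ for $i=1,\ldots,n-1$, while $\lambda(h_{n})$ remains unconstrained; this is exactly (2) with $z:=\lambda(h_{n})$.

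For the converse (2)$\Rightarrow$(1), I would explicitly construct the extension $\widetilde{\lambda}\colon\mathfrak{p}\to\mathbb{C}$ by declaring $\widetilde{\lambda}|_{\mathfrak{h}}=\lambda$ and $\widetilde{\lambda}(X_{\alpha})=0$ for every root vector in $\mathfrak{p}$. The hypothesis on $\lambda(h_{i})$ ensures $\widetilde{\lambda}$ vanishes on $[\mathfrak{p},\mathfrak{p}]=[\mathfrak{l},\mathfrak{l}]+\mathfrak{u}$, so $\widetilde{\lambda}$ is automatically a Lie algebra homomorphism to $\mathbb{C}$. There is no serious obstacle here: the statement is essentially bookkeeping about the derived subalgebra of a maximal parabolic, and the only point requiring care is the identification $[\mathfrak{p},\mathfrak{p}]\cap\mathfrak{h}=\mathrm{span}_{\mathbb{C}}\{h_{1},\ldots,h_{n-1}\}$, which leaves $h_{n}$ as the sole free direction.
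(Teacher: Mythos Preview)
Your proof is correct. The paper does not actually prove this lemma; it simply declares ``The following proposition is straightforward'' and states it without argument. Your approach --- identifying $[\mathfrak{p},\mathfrak{p}]=[\mathfrak{l},\mathfrak{l}]+\mathfrak{u}$ and observing that $[\mathfrak{p},\mathfrak{p}]\cap\mathfrak{h}=\mathrm{span}_{\mathbb C}\{h_1,\ldots,h_{n-1}\}$ --- is exactly the standard way to unpack this ``straightforward'' claim, and your argument is complete.
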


Based on this lemma, Garfinkle showed the following.

\begin{lemma}[{\cite[\S IV.3 Proposition 3, \S IV.6 Theorem 1 and \S V Theorem 1]{Ga}}]\label{Vtheta}
Let $v\in V(\theta)^{\mathfrak{n}^-}$, $\mathfrak{p}$ be the parabolic subalgebra of
$\mathfrak{g}$ defined by $\Pi-\{\alpha_n\}$, and $\lambda\in\mathfrak{h}^*$
satisfy the conditions in
Lemma \ref{prop:lambda}. Then there exists an element
$y\in\mathcal{U}_1(\mathfrak{g})^{\mathfrak{n}^-}$ depending on
$\lambda$ such that $\beta(v)-y\in I_{\mathfrak{p},\lambda}$, where $I_{\mathfrak{p},\lambda}$ be the left ideal of the universal enveloping algebra
$\mathcal{U}(\mathfrak{g})$ generated by $\{x-\lambda(x)|x\in\mathfrak{p}\}$. In this case, $\beta(v)-y\in J$.	
\end{lemma}

More explicitly, we have that
\begin{align}\label{vinU}
 \beta(v)-y=&-(n+1)(Y_{2\cdots n}Y_1+Y_{3\cdots n}Y_{12}+\cdots +Y_nY_{1\cdots n-1})\notag \\
&+Y_\theta\left(\sum_{l=1}^n (2l-1-n)h_l-\lambda\Big(\sum_{l=1}^n (2l-1-n)h_l\Big)\right)\notag\\
=&\dfrac{-(n+1)}{2}\sum_{k=2}^n(Y_{k\cdots n}Y_{1\cdots,k-1}
 +Y_{1\cdots k-1}Y_{k\cdots n})+ Y_\theta\left(\sum_{l=1}^n (2l-1-n)h_l\right)\notag\\
 &-\dfrac{(n-1)(n+1+2z)}{2}Y_\theta
 \end{align}
is an element in the Joseph ideal $J$.

Third, we find the generator of the Joseph ideal that corresponds to
the Casimir element of $\mathfrak g$. Let us denote by $C$ the Casimir element.
We have the following.

\begin{lemma}[{\cite[\S IV.3]{Ga}}]\label{prop:vtheta0An}
Let $\mathfrak g$ be the $A_n$ Lie algebra. Then
\begin{align}\label{vtheta0An}
 C-c_\lambda =&\sum_{\alpha\in\Pi}(X_\alpha Y_\alpha+ Y_\alpha X_\alpha)+
 \sum_{i=1}^n h_i\cdot\dfrac{1}{n+1}\Big((n-i+1)\big(h_1+2h_2+\cdots+(i-1)h_{i-1}\big)\notag\\
 &+i\big((n-i+1)h_i+(n-i)h_{i+1}+\cdots+h_n\big)\Big)-n\left(\dfrac{z}{n+1}+1\right)z
\end{align}
is a generator of $J$, where $c_\lambda=\langle\lambda,\lambda\rangle+\langle\lambda,2\delta\rangle$ and
$\delta$ is the half of the sum of positive roots. 	
\end{lemma}

\begin{proof}
The Casimir element is $C=\sum_{\alpha\in\Pi}X_\alpha Y_\alpha+
Y_\alpha X_\alpha+\sum_{i=1}^n h_ih_i^{\vee}$,
where $n$ is the rank of the corresponding Lie algebra.
	
For Lie algebra of $A_n$,
$2\delta=n\alpha_1+2(n-1)\alpha_2+\cdots+i(n-i+1)\alpha_i+\cdots+n\alpha_n$.
By Lemma \ref{prop:lambda}, we have $\lambda=z\lambda_n$. Thus
\[
c_\lambda=n\left(\dfrac{z}{n+1}+1\right)z.
\]
By \cite[\S IV.3 \S IV.6 Theorem 1 and \S V Theorem 1]{Ga}, $C-c_\lambda$
is an element of $J$.
\end{proof}

By Garfinkle\cite{Ga}, $J$ depends on an element $z\in\mathbb C$; to specify
its dependence on $z$, in what follows we shall write it as $J^z$.
Summarizing the above lemmas, we have the following:

\begin{theorem}[\cite{Ga}]\label{Prop:GaJosephAn}
Let $\mathfrak g$ be the type A Lie algebra. $v_0$ is the lowest weight vector in Lemma \ref{Garfinkle lemma1}.
For each $z\in\mathbb C$, there is a Joseph ideal in $\mathcal U(\mathfrak g)$, denoted by $J^z$, which is
generated by \eqref{vinU}, \eqref{vtheta0An} and $\beta(v_0)$,
where $v_0$ is given in Lemma \ref{Garfinkle lemma1}.
\end{theorem}

\subsubsection{Joseph ideal for type D and E Lie algebras}

Now we consider the Lie algebra $\mathfrak g$ of DE type.
Let $\alpha$ be the simple root not orthogonal to the highest root $\theta$;
in the case of type D and $E_6$, $E_7$, $E_8$, such an $\alpha$
is unique.

 \begin{proposition}[{see \cite{Ga}, \cite[\S 4.4]{BJ}  and \cite{GS}}]\label{structureofDE}
 Let $\mathfrak g$ be the complex semisimple Lie algebra of DE type.
 Let $\{\theta_i\}_i$ be the set of the highest roots of the complex semisimple Lie algebras obtained from
 $\mathfrak{g}$ by deleting $\alpha$ from the Dynkin diagram of $\mathfrak{g}$.
 Then we have the following decomposition of irreducible representations:
\begin{eqnarray*}
	\mathrm{Sym}^2(\mathfrak{g})= V(2\theta)
	\bigoplus\oplus_i V(\theta+\theta_i)\bigoplus V(0).
\end{eqnarray*}
\end{proposition}

For the type D Lie algebras, the unique simple root which is not
perpendicular to $\theta$ is precisely the simple root $\alpha_2$,
and thus we have the following:

\begin{fact}\label{fact 1}
For the $D_n (n> 4)$ Lie algebra $\mathfrak{g}$, we have the decomposition of
irreducible representations:
\begin{eqnarray*}
	\mathrm{Sym}^2(\mathfrak{g})\cong V(2\theta)\oplus V(\theta+\theta{'})\oplus V(\theta+\alpha_1)\oplus V(0),
\end{eqnarray*}
where $\theta{'}=\alpha_3+2\alpha_4+\cdots+2\alpha_{n-2}+\alpha_{n-1}+\alpha_n$ is the
highest root of the Lie algebra corresponding to the sub-Dynkin diagram $D_{n-2}$ of
$D_n$, which consists of the roots $\alpha_3,\cdots, \alpha_n$.

For the $D_4$ Lie algebra, we have the decomposition of
irreducible representations:
\begin{eqnarray*}
\mathrm{Sym}^2(\mathfrak{g})\cong V(2\theta)\oplus V(\theta+\alpha_1)
\oplus V(\theta+\alpha_3)\oplus V(\theta+\alpha_4)\oplus V(0).
\end{eqnarray*}
\end{fact}

For type E Lie algebras, we have the following.

\begin{fact}\label{fact 2}
\textup{(1)} For the $E_6$ Lie algebra $\mathfrak g$, $\alpha=\alpha_2$,
and
we have the following decomposition of representation:
 \begin{eqnarray*}
	\mathrm{Sym}^2(\mathfrak{g})\cong V(2\theta)\oplus V(\theta+\alpha_1+\alpha_3+
		\alpha_4+\alpha_5+\alpha_6)\oplus V(0),
		\end{eqnarray*}
where $\theta$ is the highest root of Lie algebra of type $E_6$.

\textup{(2)} For the $E_7$ Lie algebra $\mathfrak g$,
we have the following decomposition of representation:
\begin{eqnarray*}
		\mathrm{Sym}^2(\mathfrak{g})\cong V(2\theta)\oplus V(\theta+\alpha_2+
		\alpha_3+2\alpha_4+2\alpha_5+2\alpha_6+\alpha_7)\oplus V(0),
\end{eqnarray*}
where $\theta$ is the highest root of Lie algebra of type $E_7$, i.e.,
$\theta=2\alpha_1+2\alpha_2+3\alpha_3+4\alpha_4+3\alpha_5+2\alpha_6+\alpha_7$.

\textup{(3)} For the $E_8$ Lie algebra $\mathfrak g$
we have the following decomposition of representation:
 \begin{eqnarray*}
		\mathrm{Sym}^2(\mathfrak{g})= V(2\theta)\oplus
		V(\theta+2\alpha_1+2\alpha_2+3\alpha_3+
		4\alpha_4+3\alpha_5+2\alpha_6+\alpha_7)\oplus V(0),
		\end{eqnarray*}
where $\theta$ is the highest root of Lie algebra of type $E_8$, i.e.,
$\theta=2\alpha_1+3\alpha_2+4\alpha_3+6\alpha_4+5\alpha_5+4\alpha_6+
3\alpha_7+2\alpha_8$.
\end{fact}

By Kostant (see \cite{Ga} \S III.2), the ideal $I(\overline{\mathcal{O}}_{min})$ is generated by the lowest
weight vectors $v_i$ in each summand of $\oplus_i V(\theta+\theta_i)$ and $C$ in $V(0)$.
We have the following:

\begin{theorem}[{\cite[\S IV.3 Theorem 2, \S IV.6 Theorem 1 and \S V]{Ga}}]\label{Garfinkle lemma1 Dn}
Let $\g$ be the Lie algebra of type
D or E, let $v_i$ be a lowest weight vector of the irreducible
representation in $V(\theta+\theta_i)$ in
Proposition \ref{structureofDE}.
Then the Joseph ideal $J$ is generated by $\beta(v_i)$ and
$C-c_\lambda$, where in the $D_n$ case, $c_\lambda=2n-n^2$,
and in the $E_6$, $E_7$ and $E_8$ case,
$c_\lambda=-36,\ -84$ and $-240$ respectively.
\end{theorem}

\begin{remark}
According to {\cite[\S IV.4, \S IV.6 Theorem 1 and \S V]{Ga}}, in the $D_n$ case,
$\lambda(h_1)=-(n-2)$, $\lambda(h_2)=\cdots=\lambda(h_n)=0$.
In the $E_6$ case, $\lambda(h_6)=-3$, $\lambda(h_1)=\cdots=\lambda(h_5)=0$.
In the $E_7$ case,
$\lambda(h_7)=-4$, $\lambda(h_1)=\cdots=\lambda(h_6)=0$. And in the $E_8$ case,
 $\lambda(h_8)=-5$, $\lambda(h_1)=\cdots=\lambda(h_7)=0$.
 Recall that $c_\lambda=\langle\lambda,\lambda\rangle+\langle\lambda,2\delta\rangle$, where
$\delta$ is the half of the sum of positive roots, we get the values of $c_\lambda$ in the above
theorem.	
\end{remark}

\subsection{The $B$-algebras}\label{sect:B-algebra}

Suppose $\g$ is a simple Lie algebra, and $Q$ is the root lattice.
Let $\mathcal{U}(\g)$ be the universal enveloping algebra of $\g$, and
$J$ be the corresponding Joseph ideal. Recall that there is the PBW filtration of $\mathcal{U}(\g)$:
$$\mathcal{U}^0\subseteq \mathcal{U}^1 \subseteq \mathcal{U}^2\subseteq \cdots $$
 On the other hand, $\mathcal{U}(\g)$ have the following weight decomposition
$$\mathcal{U}(\g)=\bigoplus_{\mu\in Q}\mathcal{U}_\mu.$$
Furthermore, the Joseph ideal $J$ can be split as
\begin{equation}\label{J decomp}
	J=\bigoplus_{\mu\in Q}J_\mu=\bigoplus_{\mu\in Q}J\cap \mathcal{U}_\mu.
\end{equation}
Denote $\A=\A[\overline{\mathcal O}_{min}]:=Rees(\mathcal{U}(\g)/J)$,
and then there is a weight decomposition induced by that of $\mathcal{U}(\g)$,
$$\A=\bigoplus_{\mu\in Q}\A_\mu,$$
where $\A_\mu=\mathcal{U}_\mu/J_\mu$.

\begin{definition}\label{def:B-algebra}
The {\it B-algebra} of $\A[\overline{\mathcal O}_{min}]$ is defined to be
	\begin{equation*}
B(\A)=B(\A[\overline{\mathcal O}_{min}]):=\A_0\Big/\sum_{\mu\in\Delta^{+}}\{ab|a\in \A_\mu, b\in \A_{-\mu}\}.
	\end{equation*}
\end{definition}

\section{The quantum Hikita conjecture}\label{sect:thequantumHikitaconj}

As we have mentioned before, the quantum Hikita conjecture was proposed
by Kamnitzer, McBreen and Proudfoot in \cite{KMP}. Under some conditions,
the quantum Hikita conjecture implies the Hikita-Nakajima equivariant conjecture.
In this section, we first recall the two main objects in the quantum Hikita conjecture,
and then prove Theorems \ref{maintheorem0} and \ref{maintheorem}.

\subsection{Specialized quantum D-module}\label{sect:quatum D-module}
Let $X$ be a conical symplectic variety. Assume that $X$ admits
a $T\times\mathbb G_m$-equivariant projective symplectic resolution
$\tilde X$. There is a natural short exact sequence (see \cite[(6)]{KMP})
$$
0\to \mathrm H^2_{T\times\mathbb C^\times}(pt; \mathbb C)
\to \mathrm H^2_{T\times\mathbb C^\times}(\tilde X; \mathbb C)
\to\mathrm H^2(\tilde X; \mathbb C)
\to 0,$$
which is called the {\it cohomology exact sequence}.
The image of an element $u\in\mathrm H^2_{T\times\mathbb C^\times}(\tilde X; \mathbb C)$
in $\mathrm H^2(\tilde X; \mathbb C)$
is henceforth denoted by $\bar u$.

Now let $\mathrm H_2(\tilde X; \mathbb Z)_{\mathrm{free}}$ be the quotient
of $\mathrm H_2(\tilde X; \mathbb Z)$ by its torsion subgroup.
Okounkov conjectured that there is a finite set
$\Delta_{+}\subset\mathrm H_2(\tilde X; \mathbb Z)_{\mathrm{free}}$ and
an element
$L_\alpha\in\mathrm H^{2\dim X}(\tilde X\times_X\tilde X; \mathbb C)$
for each $\alpha\in\Delta_{+}$ such that
$$
u\star(-)=u\cup(-)+\hbar\sum_{\alpha\in\Delta_{+}}\langle\alpha,\bar u\rangle
\frac{q^\alpha}{1-q^\alpha}L_\alpha(-),
$$
for any $u\in\mathrm H^2_{T\times\mathbb C^\times}(\tilde X; \mathbb C)$,
where $\star$ is the quantum product. The minimal such subset $\Delta_{+}$ is called the set of positive
\textit{K\"ahler roots}.

\begin{remark}
Acording to \eqref{eq: quantum coh DE} and \eqref{qcohAn}, in our case,
the set $\Delta_{+}$ above is identified with the set of positive roots $\Delta^+$ of the
corresponding Lie algebra, after identifying $E_i$ with the simple roots $\alpha_i$.
And Okounkov's conjecture holds in our case, by setting
$L_\alpha=-(pr_1)^*\mathrm{PD}(\alpha)\cup (pr_2)^*\mathrm{PD}(\alpha)$ for each $\alpha\in \Delta_+$,
where $pr_i: \tilde X\times_X\tilde X\rightarrow \tilde X$ is the $i$-th projection,
$i=1,2$, and $\mathrm{PD}(\alpha)$ is the Poincar\'e dual of $\alpha$. Also,
$\hbar$ is $\dfrac{t_1+t_2}{2}$ in the type A case, and is $t$ in the type DE case.
\end{remark}

We next introduce several algebraic structures.
Let
$
F:=\mathbb C\{q^\alpha\,|\,\alpha\in\mathbb Z\Delta^{+}\}
$
and
$F_{\mathrm{reg}}:=F\left[\frac{1}{1-q^\alpha}\Big|\,\alpha\in\Delta^{+}\right]$.
Let
$E:=F\otimes \mathrm{Sym}\mathrm
H^2_{T\times \mathbb C^\times}(\tilde X; \mathbb C)
$ equipped with the multiplication satisfying
$uq^\alpha=q^\alpha(u+\hbar\langle\alpha,\bar u\rangle)$
for all $\alpha\in\mathbb N\Delta_+$ and $u\in\mathrm{Sym}
\mathrm H^2_{T\times\mathbb C^\times}(\tilde X; \mathbb C)$.
Let $E_{\mathrm{reg}}$ be the Ore localization with respect to the multiplication
set generated by $1-q^\alpha$ for $\alpha\in\Delta_+$
(it is showed in \cite[\S4.2]{KMP} that the multiplicative set
satisfies the Ore condition).

Let
$Q_{\mathrm{reg}}(\tilde X):=F_{\mathrm{reg}}\otimes
\mathrm H^\bullet_{T\times\mathbb C^\times}(\tilde X;\mathbb C)$.
Then $E_{\mathrm{reg}}$ acts on $Q_{\mathrm{reg}}$ as follows:
elements in $F_{\mathrm{reg}}\subset E_{\mathrm{reg}}$
acts by multiplication on the first tensor factor, while an element
$u\in \mathrm H^2_{T\times\mathbb C^\times}(\tilde X;\mathbb C)$
acts by the operator
$\hbar\partial_{\bar u}+u\star$, where $\partial_{\bar u}(q^\alpha)=
\langle \alpha,\bar u\rangle q^\alpha$.

\begin{definition}
The {\it specialized quantum D-module}
of $\tilde X$ is the $E_{\mathrm{reg}}$-module $Q_{\mathrm{reg}}(\tilde X)$.
\end{definition}

\begin{remark}\label{rem:quantumDmoduleinADEcase}
Let us move to the case of the minimal resolutions
of ADE singularities
$\widetilde{{\mathbb C^2}/\Gamma}$. In this case,
the quantum cohomology is generated by the exceptional
divisors, whose product contains no terms of degree higher than 2,
and therefore $Q_{\mathrm{reg}}(\widetilde{{\mathbb C^2}/\Gamma})$
is nothing but the equivariant quantum cohomology
algebra $\mathrm{QH}^\bullet(\widetilde{\mathbb C^2/\Gamma})$;
see also \cite[Remark 4.1]{KMP}.
Let $q=0$, then $Q_{\mathrm{reg}}(\widetilde{{\mathbb C^2}/\Gamma})$
becomes the equivariant cohomology ring $\mathrm{H}^\bullet(\widetilde{\mathbb C^2/\Gamma})$.
\end{remark}

\subsection{D-module of graded traces}\label{sect:D-module of graded traces}

We now introduce the notion of D-module of graded traces.
The general construction is quite complicated (see \cite[\S3]{KMP} for details),
and in
this section, we only focus on the case that $X$ is the minimal
nilpotent orbits in ADE type Lie algebras.

Let $X$ be a conical symplectic variety, suppose $T$ is a maximal torus of the
Hamiltonian action on $X$. Then there is an exact sequence (see \cite[\S 2.1]{KMP}):
\begin{eqnarray*}
0\rightarrow \mathrm{H}_2(\widehat{X}; \mathbb C)\oplus\mathbb C \hbar
\rightarrow\A^1_0\rightarrow\mathfrak t\rightarrow 0,
\end{eqnarray*}
where $\A^1_0$ denote the weight $0$ degree $1$ part of $\A$.
When $X=\mathcal{O}_{min}$ of type $A_n$, this exact sequence has the form:
\begin{eqnarray*}
0\rightarrow \mathbb C\oplus\mathbb C \hbar
\rightarrow\A^1_0\rightarrow\mathfrak{h}\rightarrow 0,
\end{eqnarray*}
and when $X=\mathcal{O}_{min}$ of DE type, this exact sequence has the form:
\begin{eqnarray*}
0\rightarrow \mathbb C \hbar\rightarrow\A^1_0\rightarrow\mathfrak{h}\rightarrow 0.
\end{eqnarray*}
All the $\mathfrak h$ above are the Cartan subalgebra
corresponding to the Lie algebra type.

\begin{remark}\label{element in A^0_1}
Notice that choosing a splitting of the exact sequence above is
equivalent
to choosing a quantum comoment map
$\mathrm{Sym}\mathfrak h\rightarrow\mathrm{Sym}\A_{0}^1$, and for simplicity,
the image of $h_\alpha\in\mathfrak h$ under this quantum comoment map
is also denote by $h_\alpha$, which is an element in $\A^1_0$.
\end{remark}

For
$X=\overline{\mathcal O}_{min}\subset\mathfrak g$,
let $\Delta^+$ be the set of positive roots of $\mathfrak g$,
which is called the {\it equivariant roots} of $X$.
Let $S:=\mathbb C\{q^\mu|\mu\in \mathbb N\Delta^{+}\}$ and
$S_{\mathrm{reg}}:=S\bigg[\dfrac{1}{1-q^\mu}\bigg|\mu\in  \Delta^+ \bigg]$ be the
localization of $S$.
Let $R:=S\otimes\mathrm{Sym}\mathscr A_0^1$ be the $\mathbb C[\hbar]$-algebra
with the multiplication satisfying
$xq^\mu=q^\mu(x+\hbar\langle \lambda,\bar x\rangle)$,
for all $\mu\in \mathbb N\Delta^+$ and $x\in \mathscr A_0^1$; here $\bar x\in\mathfrak h$
by the above exact sequence.
Let $\mathfrak S\subset R$ be the multiplicative set
generated by $1-q^\mu$ for all $\mu\in\Delta^+$; it
is shown in \cite[Lemma 3.4]{KMP} that $\mathfrak S$ satisfies
the Ore condition, and hence we may define
the Ore localization $R_{\mathrm{reg}}:=\mathfrak S^{-1}R$,
which, as a vector space, is isomorphic to $S_{\mathrm{reg}}\otimes\mathrm{Sym}\mathscr
A_0^1$.

Now we endow $S\otimes \A_0$ with the structure of an
$\mathbb N$-graded left $R$-module by
putting
\begin{eqnarray}\label{eq:h-action}
h_\alpha\cdot (q^\gamma\otimes a):=
q^\gamma\otimes(h_\alpha+\hbar\langle
\gamma,\alpha\rangle) a \quad\mbox{ and  }\quad
q^\mu(q^\gamma\otimes a):=q^{\mu+\gamma}\otimes a,
\end{eqnarray}
for all $h_\alpha\in\A_0^1$, $a\in\A_0$ and $\gamma,\mu\in\Delta^+$.
Let
\begin{eqnarray*}
\mathcal{I}_q:=\sum_{\mu\in\Delta^{+}}
S\cdot \left\{1\otimes ab-q^\mu\otimes ba | a\in\mathscr{A}_\mu, b
\in\mathscr{A}_{-\mu}\right\}\subset S\otimes\mathscr A_0,
\end{eqnarray*}
which turns out to be an $R$-submodule (see \cite[Proposition 3.5]{KMP}).

\begin{definition}[{\cite[\S 3.3]{KMP}}]\label{def:Dmodofgradedtraces}
Let $X=\overline O_{min}$ be the closure of the minimal nilpotent orbit
in $\mathfrak g$. Then the {\it D-module of graded traces}
of $X$
is $M:=(S\otimes\mathscr A_0)/\mathcal{I}_q$.
Its localization $M_{\mathrm{reg}}:=R_{\mathrm{reg}}\otimes_RM$ is denoted by
$Q(\mathscr{A}[\overline O_{min}])$ or simply
$Q(\mathscr{A})$.
\end{definition}

\begin{remark}\label{re:reductionofQ(A)}
(1) Since the Poisson bracket on $\mathbb C[\overline{\mathcal{O}}_{min}]$
has degree $-1$, the algebra $\A^1_0$ plays the role of $\A^2_0$ in \cite{KMP}.

(2) In fact, when $q=0$,
the submodule $\mathcal{I}_q$ reduces to the two-sided ideal
$$\sum_{\mu\in\Delta^{+}}\{ab|a\in \A_\mu, b\in \A_{-\mu}\}$$
 of $\A_0$ in Definition \ref{def:B-algebra}. By \cite[Proposition 3.8]{KMP},
$Q(\mathscr{A})|_{q=0}$ is isomorphic to degree 0 Hochshchild homology $\mathrm{HH}_0(B(\A))$ as $\mathrm{Sym} \mathscr A_0^1$-module. In our case, $B(\A)$ is commutative (see Lemma \ref{lemma:B-alg str} below), so $\mathrm{HH}_0(B(\A))\cong B(\A)$.
\end{remark}

\subsection{D-module of graded traces for minimal nilpotent orbits}\label{sect:Q(A)}

In  this subsection, we
study the D-module of graded traces of the minimal nilpotent
orbits in Lie algebras of ADE types.

Firstly we define a $S_{reg}$-linear map
\begin{align*}
\kappa: S_{reg}\otimes\mathcal{U}_0\rightarrow S_{reg}\otimes\mathcal{U}(\h).
\end{align*}
Fix a total order on the positive root set $\Delta^+$,
we obtain a PBW basis $\{\phi_i\}$ of $\mathcal{U}
=\mathcal{U}(\mathfrak{n}^{+})\otimes\mathcal{U}(\mathfrak{h})\otimes\mathcal{U}(\mathfrak{n}^{-})$.
Now, we define $\kappa$ inductively with respect to
the degree $k$ on the PBW filtration $\mathcal{U}^k_0=\mathcal{U}^k\cap \mathcal{U}_0$:

\begin{enumerate}
\item[(1)] For $k=0,1$, $\kappa$ is the identity map
(since $\mathcal{U}^1_0\cong\mathfrak h$).

\item[(2)] Assume that for $k-1$, $\kappa$ has been defined.
Now consider $\phi\in S_{reg}\otimes\mathcal{U}_0^k$. Suppose
$\phi=\sum_{i=1}^mk_i\phi_i$ under the PBW basis.
For $\phi_i\in\mathcal{U}(\h)$, set $\kappa(\phi_i)=\phi_i$;
for $\phi_i\notin\mathcal{U}(\h)$, since $\phi_i\in\mathcal{U}_0$ is a PBW basis,
$\phi_i\in \mathfrak n^{+}\mathcal{U}(\g)$, i.e., $\phi_i=X_\gamma\cdot a_{-\gamma}$
for $X_\gamma\in\g_\gamma$,
$a_{-\gamma}\in\mathcal{U}_{-\gamma}, \gamma\in\Sigma_{+}$,
set \begin{eqnarray*}
\kappa(\phi_i)=\kappa
\left(\dfrac{q^\gamma}{q^\gamma-1}[X_\gamma,a_{-\gamma}]\right).\end{eqnarray*}
 Since $[X_\gamma,a_{-\gamma}]\in\mathcal{U}_0^{k-2}$,
 this is well-defined by the induction assumption.
\end{enumerate}

\begin{lemma}\label{lem: kappa well-defined}
Set \begin{eqnarray*}
\widetilde{\mathcal{I}}_q:=\sum_{\mu\in\mathbb N\Delta^{+}}
S_{reg}\cdot \left\{1\otimes ab-q^\mu\otimes ba | a\in\mathcal{U}_\mu, b
\in\mathcal{U}_{-\mu}\right\}\subset S_{reg}\otimes\mathcal{U}_0.
\end{eqnarray*}
Then for $\phi\in\widetilde{\mathcal{I}}_q$, $\kappa(\phi)=0$.
\end{lemma}

\begin{proof}
Using the relation $a_{\mu}a_{-\mu}-a_{-\mu}a_{\mu}=[a_{\mu}, a_{-\mu}]$, 
one can deduce that $$\widetilde{\mathcal{I}}_q=\sum_{\mu\in\mathbb N\Delta^{+}}
S_{reg}\cdot \left\{1\otimes ab-\dfrac{q^\mu}{q^\mu-1}\otimes[a,b] | a\in\mathcal{U}_\mu, b
\in\mathcal{U}_{-\mu}\right\}.$$ It suffices to consider a monomial
$\phi=a_1\cdots a_n\in\mathcal{U}_0, a_i\in\g$. For any $a_\mu\in\mathcal{U}_\mu, a_{-\mu}
\in\mathcal{U}_{-\mu}$, such that $\phi=a_\mu a_{-\mu}$, we will prove
\begin{eqnarray}\label{kappa of module}
\kappa(\phi)=\kappa\left(\dfrac{q^\mu}{q^\mu-1}[a_\mu,a_{-\mu}]\right).
\end{eqnarray}
We show \eqref{kappa of module} by the induction on $k$ with $\phi\in\mathcal{U}^k_0$.
Since $\phi=a_\mu a_{-\mu}$, we have $k\geq 2$.

\noindent (1)
For $k=2$, $a_\mu\in\g_\mu$, $a_{-\mu}\in\g_{-\mu}$, $\phi$ is a PBW basis and
$\kappa(\phi)=\kappa\left(\dfrac{q^\mu}{q^\mu-1}[a_\mu,a_{-\mu}]\right)$.

\noindent (2)
Assume for $\phi=a_\mu a_{-\mu}\in\mathcal{U}^{k-1}_0$,
$\kappa(\phi)=\kappa\left(
\dfrac{q^\mu}{q^\mu-1}[a_\mu,a_{-\mu}]\right)$ .
\begin{itemize}
\item[(2a)]Consider $\phi=a_\mu a_{-\mu}\in\mathcal{U}^k_0$.
Then $a_\mu$ is of the form $a_\mu=b\cdot v\cdot c$,
where $v\in\mathfrak{n}^+$ and $b, c$ is some monomial in $\mathcal{U}$.
Set $\tilde{a}_\mu=v\cdot b\cdot c$,
$\tilde{\phi}=\tilde{a}_\mu a_{-\mu}$, then
$\phi-\tilde{\phi}=[b,v]\cdot c$, $a_{-\mu}\in \mathcal{U}_{-\mu}^{ k-1}$.
Furthermore,
\begin{align*}
\dfrac{q^\mu}{q^\mu-1}[a_\mu,a_{-\mu}]-\dfrac{q^\mu}{q^\mu-1}[\tilde{a}_\mu,a_{-\mu}]=\dfrac{q^\mu}{q^\mu-1}[[b,v],a_{-\mu}]
\end{align*}
By the induction assumption,
we have
$\kappa(\phi-\tilde{\phi})=\kappa\left(\dfrac{q^\mu}{q^\mu-1}[a_\mu-\tilde{a}_\mu,a_{-\mu}]\right)$.
Therefore, that \eqref{kappa of module} holds for $\phi$ is equivalent to that
\eqref{kappa of module} holds for $\tilde{\phi}$.
This argument reduces the proof of \eqref{kappa of module} to the case of $\phi=a_\mu a_{-\mu}$, where
both $a_\mu$ and $a_{-\mu}$ are PBW basis.

\item[(2b)] Suppose $a_\mu$ is a PBW basis,
then $a_\mu=a_{\mu+\gamma} a_{-\gamma}$,
where $\mu, \gamma\in\mathbb N\Delta^{+}$, and $a_{\mu+\gamma}\in \mathcal{U}(\mathfrak n^{+})$. We claim that
\begin{eqnarray}\label{kappa of module 2}
\kappa\left(\dfrac{q^\mu}{q^\mu-1}[a_\mu,a_{-\mu}]\right)
=\kappa\left(\dfrac{q^{\mu+\gamma}}{q^{\mu+\gamma}-1}
[a_{\mu+\gamma},a_{-\gamma}\cdot a_{-\mu}]\right).
\end{eqnarray}
In fact, we have
\begin{eqnarray*}
\kappa\left(\dfrac{q^\mu}{q^\mu-1}[a_{\mu+\gamma}\cdot a_{-\gamma},a_{-\mu}]\right)
=\dfrac{q^\mu}{q^\mu-1}\kappa\left([a_{\mu+\gamma},a_{-\mu}]a_{-\gamma}+a_{\mu+\gamma}[a_{-\gamma},a_{-\mu}]\right).	
\end{eqnarray*}
By the induction assumption, it equals
\begin{eqnarray*}
\dfrac{q^\mu}{q^\mu-1}\cdot\dfrac{q^\gamma}{q^\gamma-1}\kappa
\left(\big[[a_{\mu+\gamma},a_{-\mu}],a_{-\gamma}\big]\right)+	
\dfrac{q^\mu}{q^\mu-1}\cdot\dfrac{q^{\mu+\gamma}}{q^{\mu+\gamma}-1}\kappa
\left(\big[a_{\mu+\gamma},[a_{-\gamma},a_{-\mu}]\big]\right).
\end{eqnarray*}
Similarly, \begin{align*}
&\kappa\left(\dfrac{q^{\mu+\gamma}}{q^{\mu+\gamma}-1}
[a_{\mu+\gamma}, a_{-\gamma}\cdot a_{-\mu}]\right)\\
&=\dfrac{q^{\mu+\gamma}}{q^{\mu+\gamma}-1}
\cdot\dfrac{q^\mu}{q^\mu-1}\kappa\left(\big[[a_{\mu+\gamma},a_{-\gamma}],a_{-\mu}\big]\right)
+\dfrac{q^{\mu+\gamma}}{q^{\mu+\gamma}-1}
\cdot\dfrac{1}{q^{\gamma}-1}\kappa\left(\big[[a_{\mu+\gamma},a_{-\mu}],a_{-\gamma}]\big]\right).
\end{align*}
Then it is direct to check that \eqref{kappa of module 2} holds by the Jacobi identity.
Equation \eqref{kappa of module 2} reduces the proof of
\eqref{kappa of module} to the case of $\phi=a_\mu a_{-\mu}$, where
 $a_\mu\in\mathcal{U}(\mathfrak n^{+})$.

\item[(2c)] Now suppose  $\phi=a_\mu a_{-\mu}$, where
$a_\mu\in\mathcal{U}(\mathfrak n^{+})$. By a similar argument  in (2a), it suffices to prove \eqref{kappa of module}
in the case $a_{-\mu}$ is a PBW basis.
By argument in (2b), it suffices to consider the case  $\phi=a_\mu a_{-\mu}$, where
 $a_\mu\in \mathcal{U}(\mathfrak n^{+})$, $a_{\mu}\in \mathcal{U}(\h)\otimes \mathcal{U}(\mathfrak n^{-})$.
 By the argument in (2a) again, we can further assume  $a_{\mu}$ and $a_{\mu}$ are PBW basis, which makes $\phi$ be a PBW basis itself.

\item[(2d)]  Finally, perform the argument in (2b) on the $\phi$ with assumption in the end of (2c),
we reduce the proof of \eqref{kappa of module} to the case $\phi=a_{\mu}a_{-\mu}$,
where $\phi$ is a PBW basis and  $a_{\mu}\in \mathfrak n^{+}$. Then \eqref{kappa of module} holds by the definition of $\kappa$.
\end{itemize}
The proof of the lemma is now complete.
\end{proof}

\begin{remark}\label{rmk:kappa}
It is easy to see that $\kappa$ does not depend on the choice of the
PBW basis. In fact, one can calculate $\kappa(\phi)$ 
via the decomposition (cf. \cite{Ga})
\begin{eqnarray}\label{key decomp}
\mathcal{U}(\g)=(\mathfrak{n}^+\mathcal{U}(\g)+\mathcal{U}(\g)\mathfrak{n}^-)\oplus \mathcal{U}(\h).
\end{eqnarray}
For any $\phi \in\mathcal{U}(\g)_0$, we have $\phi =\tilde{\phi}+\phi_0$
where
$\phi_0\in\mathcal{U}(\mathfrak h)$, $\tilde{\phi}
\in(\mathfrak{n}^+\mathcal{U}(\g)+\mathcal{U}(\g)\mathfrak{n}^-)_0$.
Thus $\tilde{\phi}=\sum_{\mu\in\mathbb N\Delta^+}a_\mu a_{-\mu}$, and
\[\kappa(\phi)=\phi_0+\sum_{\mu\in\mathbb N\Delta^+}
\kappa\left(\dfrac{q^\mu}{q^\mu-1}[a_\mu, a_{-\mu}]\right).\]
The calculation of $\kappa([a_\mu, a_{-\mu}])$ is by repeating the above progress.
Furhtermore, by $a_{\mu}a_{-\mu}-a_{-\mu}a_{\mu}
=[a_{\mu}, a_{-\mu}]$, if $\phi=a_{-\mu} a_{\mu}$,
we have $\kappa(\phi)=\frac{1}{q^{\mu}-1}\kappa([a_\mu , a_{-\mu}])$.
\end{remark}

By Lemma \ref{lem: kappa well-defined}, $\kappa$ induces an $S_{reg}$-linear map
\begin{eqnarray*}
\overline{\kappa}: S_{reg}\otimes\mathcal{U}_0/\mathcal{I}_q
\rightarrow S_{reg}\otimes\mathcal{U}(\h).
\end{eqnarray*}

\begin{proposition}\label{prop:kappa iso}
$\overline{\kappa}$ is an isomorphism of
$R_{reg}:=S_{reg}\otimes\mathrm{Sym}\mathscr A_0^1$-modules.	
\end{proposition}

\begin{proof}
The injectivity of $\overline{\kappa}$ is by the definition of $\kappa$. The surjectivity $\overline{\kappa}$ is by the natural embedding $\mathcal{U}(\h)\hookrightarrow \mathcal{U}(\mathfrak{g})$. In what follows, we check
the compatibility between $\overline{\kappa}$ and the $h$-action, where $h\in\A_0^1$.

We perform an induction argument on the degree $k$ of PBW filtration of $\mathcal{U}(\mathfrak{g})$. For $k=0$ and $1$, the compatibility holds obviously.
Now it suffices to check that
$\overline{\kappa}(h\cdot a_\gamma a_{-\gamma})
=\overline{\kappa}\left(h\cdot\big(\dfrac{q^\gamma}{q^\gamma-1}
[a_\gamma,a_{-\gamma}]\big)\right)$. By \eqref{kappa of module},
\begin{align*}
\overline{\kappa}(h\cdot a_\gamma a_{-\gamma})
&=\overline{\kappa}\left(\dfrac{q^\gamma}{q^\gamma-1}[ha_\gamma,a_{-\gamma}]\right)\\
&=\overline{\kappa}\left(\dfrac{q^\gamma}
{q^\gamma-1}(h[a_\gamma,a_{-\gamma}]+[h,a_{-\gamma}]a_\gamma)\right)\\
&=\overline{\kappa}\left(\dfrac{q^\gamma}{q^\gamma-1}h[a_\gamma,a_{-\gamma}]
+\dfrac{q^\gamma}{q^\gamma-1}\cdot\dfrac{1}{q^\gamma-1}
\big[a_\gamma,[h,a_{-\gamma}]\big]\right)\\
&=\overline{\kappa}\left(\dfrac{q^\gamma}
{q^\gamma-1}h[a_\gamma,a_{-\gamma}]-\dfrac{q^\gamma}
{(q^\gamma-1)^2}\langle h,\gamma\rangle [a_\gamma,a_{-\gamma}]\right).
\end{align*}
On the other hand, by \eqref{eq:h-action},
\begin{align*}
h\cdot\left(\dfrac{q^\gamma}{q^\gamma-1}[a_\gamma,a_{-\gamma}]\right)
&=\dfrac{q^\gamma}{q^\gamma-1}h\cdot[a_\gamma,a_{-\gamma}]-\dfrac{\langle h,\gamma\rangle
q^\gamma(q^\gamma-1)+\langle h,\gamma\rangle q^\gamma\cdot q^\gamma}{(q^\gamma-1)^2}
[a_\gamma,a_{-\gamma}]\\
&=\dfrac{q^\gamma}{q^\gamma-1}h[a_\gamma,a_{-\gamma}]
-\dfrac{\langle h,\gamma\rangle q^\gamma}{(q^\gamma-1)^2}[a_\gamma,a_{-\gamma}].
\end{align*}
Applying $\overline{\kappa}$ to it and comparing with the previous formula,
we get the desired identity.
\end{proof}

\begin{lemma}\label{lem: Q(A) structure}
$
Q(\A)\cong (S_{reg}\otimes\mathcal{U}_0/\widetilde{\mathcal{I}}_q)/
\big(S_{reg}\otimes J/(S_{reg}\otimes J)\cap\widetilde{\mathcal{I}}_q\big).
$
\end{lemma}
	
\begin{proof}
One can check that, via the fact that $J$ is an ideal,
$Q(\A)\cong S_{reg}\otimes\mathcal{U}_0/(S_{reg}\otimes J+\widetilde{\mathcal{I}}_q)$,
which is induced by the projection $\mathcal{U}_0\rightarrow \mathcal{U}_0/J$.
On the other hand,
\begin{eqnarray*}
S_{reg}\otimes\mathcal{U}_0/(S_{reg}\otimes J+
\widetilde{\mathcal{I}}_q)\cong (S_{reg}\otimes\mathcal{U}_0/\widetilde{\mathcal{I}}_q)
/(S_{reg}\otimes J+\widetilde{\mathcal{I}}_q)/\widetilde{\mathcal{I}}_q,	
\end{eqnarray*}
and $(S_{reg}\otimes J+\widetilde{\mathcal{I}}_q)/\widetilde{\mathcal{I}}_q
\cong S_{reg}\otimes J/\big((S_{reg}\otimes J)\cap\widetilde{\mathcal{I}}_q\big)$.
The lemma follows.
\end{proof}

By Proposition \ref{prop:kappa iso} and \ref{lem: Q(A) structure}, we have
\begin{eqnarray}\label{eq:Q(A)}
	Q(\A)\cong S_{reg}\otimes Rees(\mathcal{U}(\mathfrak h)/\mathcal{J}_{\mathfrak{h}}),
\end{eqnarray}
where $\mathcal{J}_{\mathfrak{h}}$ is the image of $S_{reg}\otimes J/(S_{reg}\otimes J\cap\widetilde{\mathcal{I}}_q)$ via tha map $\overline{\kappa}$.

\begin{proposition}\label{prop:J_h generator}
As an $R_{reg}$-module,	$\mathcal{J}_{\mathfrak{h}}$
is generated by $\kappa\big(1\otimes(J\cap \mathcal{U}_0^2)\big)$.
\end{proposition}

\begin{proof}
Pick an arbitrary element $a\in S_{reg}\otimes J$,
it is sufficient
to prove that, there exists $w_i\in J\cap\mathcal{U}_0^2$ and 
$u_i\in S_{reg}\otimes\mathcal{U}(\mathfrak{h})$ such that 
$a=\sum_i u_i\cdot w_i$ in $S_{reg}\otimes\mathcal{U}_0/\mathcal{I}_q$.
In fact, by Theorems \ref{Prop:GaJosephAn} and \ref{Garfinkle lemma1 Dn}, 
$a=\sum_{j=1}^m b_j\cdot\tilde{w}_j$ in $S_{reg}\otimes\mathcal{U}_0$,
by some $\tilde{w}_j\in J\cap\mathcal{U}^2$, $b_j\in S_{reg}\otimes\mathcal{U}$.
Without loss of generality, suppose $m=1$, i.e., $a=b\cdot\tilde{w}$.
Assume $a\in S_{reg}\otimes\mathcal{U}^k_0$ and we perform the induction on
$k$.

\noindent (1) For $k=2$, $a=\tilde{w}$, then $\tilde{w}\in J\cap\mathcal{U}^2_0$.

\noindent (2) Assume for $k-1$, the claim holds.
Consider the case $a\in S_{reg}\otimes\mathcal{U}^{k}$.	
Suppose $a=b_1b_2\cdots b_{k-2}\cdot\tilde{w}$, $b_i\in\g$.
\begin{enumerate}	
\item[(2a)]
If $b_1\in\g_\mu$, $\mu\in\Delta^{+}$,
then $a=\dfrac{q^\mu}{q^\mu-1}[b_1,b_2\cdots b_{k-2}\tilde{w}]$
in $S_{reg}\otimes\mathcal{U}_0/\mathcal{I}_q$.
We then apply the induction assumption on
$\dfrac{q^\mu}{q^\mu-1}[b_1,b_2\cdots b_{k-2}\tilde{w}]$.

\item[(2b)]
If $b_1\in\g_{-\mu}$, then $a=\dfrac{1}{q^\mu-1}[b_2\cdots b_{k-2}\tilde{w},b_1]$, then
similar to (2a), we apply
the induction assumption.
	
\item[(2c)]	
If $b_1\in\mathfrak h$, we consider $b_1b_2\in\g_{\mu}$ or in $\g_{-\mu}$ and perform a similar argument as before.

\item[(2d)]	
Finally, we are left to consider the case where all
$b_i\in\mathfrak h$. In this case, $\tilde{w}\in J\cap\mathcal{U}_0^2$.
\end{enumerate}
Thus we proved that $a=b\cdot\tilde{w}$,
where $b\in\mathcal{U}(\mathfrak h)$, $\tilde{w}\in J\cap\mathcal{U}_0^2$.
\end{proof}

Proposition \ref{prop:J_h generator} together with \eqref{eq:Q(A)} tells us that,
to calculate $Q(\A)$, we only need to calculate $J_0\cap \mathcal{U}^2$.

We briefly come back to the study of B-algebra $B(\A)$.
\begin{lemma}\label{lemma:B-alg str}
  As an algebra, $B(\A)\cong \mathcal{U}(\mathfrak h)/(\mathcal{J}_{\mathfrak{h}}|_{q=0})$.
\end{lemma}
\begin{proof}
  By setting $q=0$ in \eqref{eq:Q(A)}, we know $B(\A)\cong \mathcal{U}(\mathfrak h)/\mathcal{J}_{\mathfrak{h}}|_{q=0}$ as vector spaces. Furthermore, it is easy to check that $\kappa$ becomes a morphism between algebras and $\mathcal{J}_{\mathfrak{h}}$ becomes an ideal when $q=0$.
\end{proof}

\subsubsection{Some calculations on dimension}
In this subsection we calculate the dimension $J_0\cap \mathcal{U}^2$. Suppose $I={\rm gr}J=\bigoplus_{k\in \N}I^k$,
where $I^k=(J\cap \mathcal{U}^k)/(J\cap \mathcal{U}^{k-1})$. Then $I$ is an ideal of the polynomial ring
${\rm gr}(\mathcal{U}(\g))= \mathrm{Sym}(\g)$, and the degree of elements in $I^k$ is $k$.
For $\mu\in Q$, set $I^k_{\mu}$ be the component of $I^k$ with weight $\mu$.
	
\begin{lemma}\label{lemma:dimJU2}With the above notations, we have
\begin{equation}\label{dimJU2}
\dim(J_0\cap \mathcal{U}^2)=\dim I^2_0.
\end{equation}
\end{lemma}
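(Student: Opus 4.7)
The plan is to reduce the computation to showing that $J$ contains no nonzero elements of filtration degree $\le 1$. First I would exploit the fact that the $\mathfrak{h}$-weight decomposition is compatible with the PBW filtration: both $J$ and each $\mathcal{U}^k$ are stable under $\mathrm{ad}(\mathfrak{h})$, so taking weight-$0$ components is exact on the filtration subquotients. Consequently
$$I^2_0 \;=\; \bigl((J\cap \mathcal{U}^2)\big/(J\cap \mathcal{U}^1)\bigr)_0 \;=\; \frac{(J\cap \mathcal{U}^2)_0}{(J\cap \mathcal{U}^1)_0} \;=\; \frac{J_0\cap \mathcal{U}^2}{J_0\cap \mathcal{U}^1}.$$
Thus the lemma follows at once from the claim that $J_0\cap \mathcal{U}^1=0$; in fact it will be no harder to show the stronger statement $J\cap \mathcal{U}^1=0$.

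Next I would verify $J\cap \mathcal{U}^1=0$. Since $\mathcal{U}^1=\mathbb{C}\oplus \mathfrak{g}$ and $J$ is ad-stable (being a two-sided ideal), the intersection $J\cap \mathcal{U}^1$ is a $\mathfrak{g}$-submodule of $\mathbb{C}\oplus \mathfrak{g}$. It cannot contain a nonzero scalar, because $J$ is a proper ideal (the annihilator of an irreducible representation, so $1\notin J$). And it cannot contain a nonzero element of $\mathfrak{g}$: if it did, then by simplicity of $\mathfrak{g}$ it would contain all of $\mathfrak{g}$, and passing to the associated graded would give $\mathfrak{g}\subseteq \mathrm{gr}\,J=I(\overline{\mathcal{O}}_{min})$. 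But a nonzero linear functional on $\mathfrak{g}^*$ that vanishes on $\overline{\mathcal{O}}_{min}$ must vanish on the $G$-invariant span of $\mathcal{O}_{min}$, which is all of $\mathfrak{g}^*$ by simplicity of $\mathfrak{g}$ — contradiction.

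Combining the two steps yields $\dim I^2_0 = \dim(J_0\cap \mathcal{U}^2)$, which is the desired equality \eqref{dimJU2}. The only conceptual point is the compatibility of the weight decomposition with the PBW filtration in Step~1, and this is immediate from $\mathrm{ad}(\mathfrak{h})$-equivariance of both structures; there is no real obstacle. The argument never uses the specific form of the Joseph generators — only that $J$ is a proper two-sided ideal whose associated graded is $I(\overline{\mathcal{O}}_{min})$ — so it applies uniformly in all ADE types.
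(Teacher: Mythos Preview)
Your proof is correct and follows essentially the same route as the paper: first establish $J\cap\mathcal{U}^1=0$, then use compatibility of the weight decomposition with the PBW filtration to conclude $J_0\cap\mathcal{U}^2\simeq I^2_0$. The only difference is in how $J\cap\mathcal{U}^1=0$ is justified: the paper invokes Kostant's theorem (that $I=\mathrm{gr}\,J$ is generated by its degree-$2$ part, forcing $I^1=0$), whereas you argue directly that no nonzero linear form can vanish on $\overline{\mathcal{O}}_{min}$ because its linear span is a nonzero $G$-stable subspace of $\mathfrak{g}^*$, hence all of $\mathfrak{g}^*$ by simplicity. Your argument is more self-contained for this particular step; the paper's citation of Kostant is shorter but imports a stronger result than is needed here.
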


\begin{proof}
Since $J\neq \mathcal{U}(\g)$, $1\notin J$, $J\cap \mathcal{U}^0=\{0\}$. Then as a vector space,
\begin{align*}
 J\cap \mathcal{U}^1	\cong I^1=(J\cap \mathcal{U}^1)/(J\cap \mathcal{U}^0).
\end{align*}
By a theorem of Kostant (see \cite[Theorem III.2.1]{Ga}),
as an ideal of $ \mathrm{Sym}(\g)$, $I$ is generated by $I^2$,
which contains the homogenous elements with degree 2.
Therefore $I^1=\{0\}$, and $ J\cap \mathcal{U}^1	=\{0\}$.

Now we consider $ J\cap \mathcal{U}^2$, as a vector space,
\begin{align*}
 J\cap \mathcal{U}^2\cong I^2=(J\cap \mathcal{U}^2)/(J\cap \mathcal{U}^1).
\end{align*}
Since the projection $J\cap \mathcal{U}^k\rightarrow
(J\cap \mathcal{U}^k)/(J\cap \mathcal{U}^{k-1})$
is compatible with the decomposition \eqref{J decomp},
we have $J_0\cap \mathcal{U}^2\cong I^2_0$.
\end{proof}

By Konstant, we have the following decomposition of $\g$-module (see \cite{Ga} or \cite{Sh}).

\begin{theorem}[Kostant]\label{thm: Kostant}
Suppose $\g$ is a semisimple Lie algebra, and $\theta$ is the highest weight of the adjoint
representation $\g$. Then as a $\g$-module,
\begin{equation*}
	{\rm Sym}^2 \g\cong V(2\theta)\oplus L_2,
\end{equation*}
where $V(2\theta)$ is the irreducible representation of highest weight $2\theta$,
and $L_2$ is a representation with underlying space $I^2$.
\end{theorem}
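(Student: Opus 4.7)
The theorem is equivalent, via complete reducibility of $\mathrm{Sym}^2\mathfrak{g}$ as a finite-dimensional $\mathfrak{g}$-module (Weyl's theorem), to the statement that $\mathbb{C}[\overline{\mathcal{O}}_{min}]_2 \cong V(2\theta)$ as $\mathfrak{g}$-modules, where the left side denotes the degree two component of the coordinate ring. My plan is to produce $V(2\theta)$ as a $\mathfrak{g}$-submodule of $\mathrm{Sym}^2\mathfrak{g}$ that complements $I^2$, and then identify this complement with the quotient $\mathrm{Sym}^2\mathfrak{g}/I^2$.

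First I would exhibit the submodule explicitly. Let $X_\theta$ be a nonzero root vector for the highest root. Since $\alpha+\theta$ is not a root for any positive $\alpha$, one has $[\mathfrak{n}^+,X_\theta]=0$, so the derivation action gives $X_\alpha\cdot X_\theta^2 = 2[X_\alpha,X_\theta]\cdot X_\theta = 0$ in $\mathrm{Sym}^2\mathfrak{g}$. Hence $X_\theta^2$ is a highest weight vector of weight $2\theta$, and the $\mathfrak{g}$-submodule $M$ it generates, being a finite-dimensional highest weight module, is forced to be isomorphic to the simple module $V(2\theta)$.

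Next I would show $M\cap I^2 = 0$: since $M$ is simple it suffices to check $X_\theta^2 \notin I^2$. Identifying $\mathrm{Sym}(\mathfrak{g})$ with polynomial functions on $\mathfrak{g}^*\cong\mathfrak{g}$ via the Killing form, the function $X_\theta^2$ evaluated at a root vector $Y_\theta\in\mathfrak{g}^{-\theta}$ equals $B(X_\theta,Y_\theta)^2\neq 0$; since $-\theta$ is Weyl-conjugate to $\theta$ we have $Y_\theta \in \mathcal{O}_{min}$, and hence $X_\theta^2$ does not vanish on $\overline{\mathcal{O}}_{min}$.

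The main obstacle is then to show $M+I^2 = \mathrm{Sym}^2\mathfrak{g}$, equivalently that $\mathbb{C}[\overline{\mathcal{O}}_{min}]_2 \cong V(2\theta)$. I would invoke the classical description of the minimal orbit closure as the affine cone over the homogeneous projective variety $G/P$ (the variety of highest weight lines), where $P$ is the stabilizer of the line $\mathbb{C}\cdot X_\theta$, embedded by the line bundle $\mathcal{L}_\theta$ associated to the character $\theta$. Since $\overline{\mathcal{O}}_{min}$ is normal, the coordinate ring decomposes as $\bigoplus_{k\geq 0} H^0(G/P,\mathcal{L}_\theta^{\otimes k})$; Borel--Weil then identifies the $k$-th summand with $V(k\theta)^*$, which is in turn isomorphic to $V(k\theta)$ via the self-duality of the adjoint representation. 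Specializing to $k=2$ gives the required identification and, combined with the previous two steps, completes the proof.
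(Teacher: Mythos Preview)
The paper does not supply its own proof of this statement; it is quoted as a known result attributed to Kostant, with a citation to Garfinkle's thesis and Shlykov's preprint. So there is nothing in the paper to compare your argument against.

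Your argument is correct and is essentially the standard one. Each step is sound: $X_\theta^2$ is a highest weight vector of weight $2\theta$; it does not lie in $I^2$ because $Y_\theta\in\mathcal{O}_{min}$ and $B(X_\theta,Y_\theta)\neq 0$; and the identification $\mathbb{C}[\overline{\mathcal{O}}_{min}]_k\cong V(k\theta)$ via normality of the cone and Borel--Weil is exactly the classical computation (this is the Vinberg--Popov result the paper cites for normality). One small remark on the last step: you need projective normality of $G/P$ in $\mathbb{P}(\mathfrak{g})$, not merely normality of $G/P$ itself, to identify the homogeneous coordinate ring with the section ring $\bigoplus_k H^0(G/P,\mathcal{L}_\theta^{\otimes k})$; but projective normality of $G/P$ is equivalent to normality of the affine cone $\overline{\mathcal{O}}_{min}$, so the logic is fine once stated carefully. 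The self-duality argument for $V(2\theta)$ is also correct: $-w_0\theta=\theta$ (since the adjoint representation is self-dual) forces $-w_0(2\theta)=2\theta$.
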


For a $\g$-module $V$, denote by $V_0$ the subspace of $V$ with weight $0$,
then we have
the following.

\begin{lemma} With the notations as above, we have:
\begin{equation}\label{dimension of I}
		\dim(I^2_0)=\dim({\rm Sym}^2 \g)_0- \dim V(2\theta)_0,
\end{equation}
where
\begin{equation}\label{dimSymh}
	\dim({\rm Sym}^2 \g)_0=\frac{\dim\g-\dim\h}{2}+\dim ({\rm Sym}^2\h).
\end{equation}
\end{lemma}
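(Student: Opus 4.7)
The plan is to prove the two displayed identities separately using Kostant's theorem (stated just above) for the first and the root space decomposition for the second. Both are really just bookkeeping in representation theory, and the only content beyond definitions is the invocation of Kostant's decomposition.

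For the first identity, I would start from Kostant's isomorphism $\mathrm{Sym}^2\g\simeq V(2\theta)\oplus L_2$, which is an isomorphism of $\g$-modules and in particular of $\h$-modules. Since an isomorphism of $\h$-modules is compatible with the weight space decomposition, taking the weight-$0$ subspaces yields
\begin{equation*}
(\mathrm{Sym}^2\g)_0 \;=\; V(2\theta)_0 \,\oplus\, (L_2)_0.
\end{equation*}
Because $L_2$ has underlying space $I^2$ by the theorem, $(L_2)_0 = I^2_0$, and comparing dimensions gives the stated formula. This step uses nothing beyond the theorem quoted immediately above the lemma.

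For the second identity, I would use the root space decomposition $\g=\h\oplus\bigoplus_{\alpha\in\Delta}\g^\alpha$ to expand
\begin{equation*}
\mathrm{Sym}^2\g \;=\; \mathrm{Sym}^2\h \,\oplus\, \bigl(\h\otimes\textstyle\bigoplus_{\alpha}\g^\alpha\bigr) \,\oplus\, \mathrm{Sym}^2\!\bigl(\textstyle\bigoplus_\alpha\g^\alpha\bigr)
\end{equation*}
as $\h$-modules. The first summand lies entirely in weight $0$ and contributes $\dim\mathrm{Sym}^2\h$. The middle summand has all weights of the form $\alpha\neq 0$, so it contributes nothing. In the last summand, a product $\g^\alpha\cdot\g^\beta$ has weight $\alpha+\beta$, which equals $0$ precisely when $\beta=-\alpha$; since each root space is one-dimensional and the pairs $\{\alpha,-\alpha\}$ number $|\Delta|/2=(\dim\g-\dim\h)/2$, the weight-$0$ contribution from this piece is exactly $(\dim\g-\dim\h)/2$. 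Summing gives the formula.

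Neither step presents a real obstacle: the first identity is a direct consequence of the already-quoted Kostant decomposition, and the second is a counting argument on the root space decomposition. The only thing to be careful about is distinguishing $(\mathrm{Sym}^2\g)_0$, which is a weight-zero subspace and not the invariant subspace $(\mathrm{Sym}^2\g)^\g$, so one should not confuse zero-weight vectors with $\g$-invariants. With that caveat in mind, the lemma is established.
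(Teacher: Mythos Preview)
Your proposal is correct and follows essentially the same approach as the paper: the first identity comes directly from taking weight-$0$ parts in Kostant's decomposition, and the second from observing that the weight-$0$ elements of $\mathrm{Sym}^2\g$ are spanned by the products $x_\alpha x_{-\alpha}$ for $\alpha\in\Delta^+$ together with $\mathrm{Sym}^2\h$. The paper's own proof is a single sentence to this effect, so your write-up is a fuller version of the same argument.
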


\begin{proof}
Just notice that all elements in $({\rm Sym}^2 \g)_0$ is a linear combination of
$x_\mu x_{-\mu}$, $x_\mu\in \g_{\mu}$ and $h_i h_j$, $h_i, h_j\in \h$.
\end{proof}

The calculation of $\dim V(2\theta)_0$ is more difficult. We just state the result here 
and defer the proof to appendix \ref{appendix}.

\begin{lemma}\label{lemmma:dimV0}
For the ADE type Lie algebra $\g$,
	\begin{equation}\label{dimV0}
		\dim V(2\theta)_0=\frac{\dim\g-\dim\h}{2}.
	\end{equation}
\end{lemma}

Combining \eqref{dimJU2}, \eqref{dimension of I}, \eqref{dimSymh} and \eqref{dimV0}, we have
the following.

\begin{proposition}\label{Prop:I^2_0}
If $\g$ is of ADE type, then
\begin{equation}\label{eq: I_0^2}
\dim(J_0\cap \mathcal{U}^2)=\dim (L_2)_0=\dim({\rm Sym}^2\h).	
\end{equation}
\end{proposition}

\begin{remark}
\eqref{eq: I_0^2} does not hold at least in $G_2$ case. We use the notation in \cite{Hum}. In $G_2$ case, $\theta=3\alpha_1+2\alpha_2$, $\dim \g =14$, $\dim \h =2$. By \cite[Section 22.4, Table 2]{Hum}, $\dim V(2\theta)_0=5$. By \eqref{dimension of I} and \eqref{dimSymh}, $\dim(J_0\cap \mathcal{U}^2)=\dim I^2_0=4$, but $\dim({\rm Sym}^2\h)=3$.
\end{remark}

\subsubsection{Analysis on $\mathfrak{g}$-module $L_2$}

Suppose $\g$ is a semi-simple Lie algebra of ADE type, denote by
$V(\lambda)$ the irreducible $\g$ representations with
highest weight $\lambda$.
First let us recall the following well known result:

\begin{lemma}\label{lem: Casimir scaling}
Let $C$ be the Casimir element of $\g$. Then
$C$ acts on $V(\lambda)$ as a scaling
$c_\lambda=\langle 2\delta,\lambda\rangle+\langle\lambda,\lambda\rangle$,
where $\delta$ is the half of the sum of positive roots.	
\end{lemma}
\begin{proof}
  The Casimir element $C$ lies in the center of $\mathcal{U}(\mathfrak{g})$, so it act as a scaling on the $V(\lambda)$. To calculate the scaling $c_\lambda$, just perform the $C$-action on the highest weight vector of $V(\lambda)$.
\end{proof}
Let us recall a result of Kostant (see \cite{FH}), which in fact holds for any Lie algebras:

\begin{lemma}\label{lem: Casimir}
Suppose $V=V_1\oplus V_2$ is a spliting of $\mathfrak{g}$-module,
$C$ acts on $V_1$ as scaling $c_1$ and acts on $V_2$ as scaling
different from $c_1$, then as a vector space, $V_2=Im(C-c_1\mathrm{Id})$.	
\end{lemma}

Next, we have the following lemma.

\begin{lemma}\label{lem: change root}
$\sum_{\gamma\in\Delta^+}\langle\alpha,\gamma\rangle h_\gamma
=\dfrac{1}{2}\langle 2\delta+\theta,\theta\rangle h_\alpha$, for any $h_\alpha\in\mathfrak{h}$.
\end{lemma}

\begin{proof}
Consider the $\mathfrak{g}$-module $V(\theta)=\mathfrak{g}$,
for any $\alpha\in\mathfrak{h}$, we have
\begin{align*}
C\cdot h_\alpha
&=\big(\sum_{\gamma\in\Delta^+}X_\gamma
Y_\gamma+ Y_\gamma X_\gamma+\sum_{i=1}^n h_ih_i^{\vee} \big)\cdot h_\alpha\\
&=\big(2\sum_{\gamma\in\Delta^+}X_\gamma Y_\gamma\big)\cdot h_\alpha\\
&=2\sum_{\gamma\in\Delta^+}\langle\gamma,\alpha\rangle  h_\gamma.
\end{align*}
By Lemma \ref{lem: Casimir scaling} Casimir operator $C$ acts on $V(\lambda)$ as a scaling
$c_\lambda=\langle 2\delta,\lambda\rangle+\langle \lambda,\lambda\rangle$, we have
\[
C\cdot h_\alpha=(\langle 2\delta,\theta\rangle+\langle
\theta,\theta\rangle) h_\alpha=\langle 2\delta+\theta,\theta\rangle \cdot h_\alpha.\qedhere
\]
\end{proof}

Notice that,
for complex semisimple Lie algebra, there is an isomorphism
$$
\mathfrak h \rightarrow\mathfrak h^*:
h \mapsto  K(h,-),
$$
where $K(-,-)$ is the Killing form. For Lie algebra of type ADE,
the preimage of $\alpha$ under this map is $h_\alpha$, and
thus we have $\langle \alpha,\alpha'\rangle=K(h_\alpha,h_{\alpha'})$.

\begin{lemma}\label{lem: element of W}
Let $(L_2)_0$ be the weight 0 subspace of $L_2$ in Theorem \ref{thm: Kostant}. Then
$(L_2)_0$ is spanned by vectors of the following form:
\begin{eqnarray*}
h_{\beta_1}h_{\beta_2}+\sum_{\gamma\in\Delta^{+}}\langle \beta_1,
\gamma\rangle\langle \beta_2,\gamma\rangle X_\gamma Y_{\gamma},
\end{eqnarray*}
where $\beta_1,\beta_2\in\mathfrak h^*$.
\end{lemma}

\begin{proof}
For any two distinct simple roots $\alpha_i,\alpha_j\in\Pi$, let
\begin{eqnarray*}
v_{ij}:=h_ih_j+\sum_{\gamma\in\Delta^{+}}\langle \alpha_i,
\gamma\rangle\langle \alpha_j,\gamma\rangle X_\gamma Y_{\gamma}.
\end{eqnarray*}
Since these vectors are linearly independent, and
$\#\{h_ih_j\}_{1\leq i\leq j\leq n}=\dim\mathrm{Sym}^2\mathfrak h=\dim (L_2)_0$,
to show the lemma it suffices to show $v_{ij}\in (L_2)_0$.
	
In fact, we have
\begin{align*}
& \big(C-c_{2\theta}\cdot \mathrm{Id}\big)(h_ih_j)\\
&= \big(\sum_{\gamma\in\Delta^+}X_\gamma Y_\gamma+ Y_\gamma X_\gamma
+\sum_{i=1}^n h_ih_i^{\vee}-C_{2\theta}\cdot \mathrm{Id}\big)(h_ih_j)\\
&=\big(2\sum_{\gamma\in\Delta^+}X_\gamma Y_\gamma-c_{2\theta}\cdot \mathrm{Id}\big)(h_ih_j)\\
&= 2\sum_{\gamma\in\Delta^+}\big(-2\langle\alpha_i,\gamma\rangle\langle\gamma,
\alpha_j\rangle X_\gamma Y_\gamma+\langle\gamma,\alpha_i \rangle h_jh_\gamma
+\langle \gamma,\alpha_j \rangle h_i h_\gamma\big)-\big(\langle 2\delta, 2\theta\rangle
+\langle 2\theta,2\theta\rangle\big)h_ih_j.
\end{align*}
By Lemma \ref{lem: change root}, we have
\begin{align*}
&\big(C-c_{2\theta}\cdot \mathrm{Id}\big)(h_ih_j)\\
&=-4\sum_{\gamma\in\Delta^+}\langle\alpha_i,\gamma\rangle\langle\gamma,\alpha_j\rangle
X_\gamma Y_\gamma+2\langle2\delta+\theta,\theta\rangle h_ih_j-\big(\langle 2\delta,
2\theta\rangle+\langle 2\theta,2\theta\rangle\big)h_ih_j\\
&=-4\sum_{\gamma\in\Delta^+}\langle\alpha_i,\gamma\rangle\langle\gamma,
\alpha_j\rangle X_\gamma Y_\gamma-4h_ih_j\\
&=-4v_{ij}.
\end{align*}
Set $V(2\theta)=V_1$, $L_2=V_2$ in Lemma \ref{lem: Casimir}, we know $v_{ij}\in L_2$,
and the weight of $v_{ij}$ is $0$, thus $v_{ij}\in(L_2)_0$.
\end{proof}
By Proposition \ref{Prop:I^2_0} and Lemma \ref{lem: element of W} , we have the following:

\begin{corollary}\label{cor: (L_2)_0}
There is an isomorphism of vector spaces:
\begin{eqnarray*}
\Psi:\mathrm{Sym}^2\mathfrak{h}&\rightarrow &(L_2)_0,\\
h_{\beta_1}h_{\beta_2}&\mapsto & h_{\beta_1}h_{\beta_2}
+\sum_{\gamma\in\Delta^{+}}\langle \beta_1,\gamma\rangle\langle \beta_2,\gamma\rangle X_\gamma Y_{\gamma}.
\end{eqnarray*}
\end{corollary}

Considering the Killing form $K(-,-):\mathfrak h\otimes\mathfrak h\rightarrow \mathbb C$,
since it is a symmetric bilinear form, it induces a linear functional
$K:\mathrm{Sym}^2\mathfrak h\rightarrow\mathbb{C}$.
Recall that the subrepresentation $L_2$ has a further $\mathfrak g$-module decomposition
\[
L_2=W_2\oplus V(0),
\]
where $W_2=\oplus_iV(\lambda_i)$ with $c_{\lambda_i}\neq 0$
and $V(\lambda_i)$ is the highest weight representation (see Facts \ref{fact 1} and \ref{fact 2}).
We then have the following:

\begin{lemma}\label{lem: v in W_2}
Suppose $v\in (W_2)_0$, then $K\circ \Psi^{-1}(v)=0$.
\end{lemma}

\begin{proof}
Set $V_1=V(0)$, $V_2=W_2$ in Lemma \ref{lem: Casimir},
we know that $v=C\cdot u$ for some $u\in(L_2)_0$.
By Lemma \ref{lem: element of W}, without loss of generality,
assume $u=h_{\beta_1}h_{\beta_2}+\sum_{\gamma\in\Delta^{+}}\langle
\beta_1,\gamma\rangle\langle \beta_2,\gamma\rangle X_\gamma Y_{\gamma}$,
then
\begin{align*}
v&=2\langle 2\delta+\theta,\theta\rangle h_{\beta_1}
h_{\beta_2}-2\sum_{\gamma\in\Delta^{+}}\langle \beta_1,
\gamma\rangle\langle \beta_2,\gamma\rangle h_\gamma^2+
\textup{terms of }X_\gamma Y_\gamma,\\
\Psi^{-1}(v)&=2\langle 2\delta+\theta,\theta\rangle h_{\beta_1}
h_{\beta_2}-2\sum_{\gamma\in\Delta^{+}}\langle \beta_1,
\gamma\rangle\langle \beta_2,\gamma\rangle h_\gamma^2,\\
K\circ\Psi^{-1}(v)&=2\big(\langle 2\delta+\theta,\theta\rangle
\langle \beta_1,\beta_2\rangle-\sum_{\gamma\in\Delta^{+}}
\langle \beta_1,\gamma\rangle\langle \beta_2,
\gamma\rangle\langle\gamma,\gamma\rangle\big).
\end{align*}
Now to show the lemma it suffices to check that
\begin{eqnarray*}
2\sum_{\gamma\in\Delta^{+}}\langle \beta_1,\gamma\rangle\langle
\beta_2,\gamma\rangle=\langle 2\delta+\theta,\theta\rangle \langle \beta_1,\beta_2\rangle.
\end{eqnarray*}
In fact, by Lemma \ref{lem: change root}, we have
\begin{align*}
2\sum_{\gamma\in\Delta^{+}}\langle \beta_1,\gamma\rangle\langle \beta_2,\gamma\rangle
=2\big<\sum_{\gamma\in\Delta^{+}}\langle\beta_1,\gamma\rangle \gamma,\beta_2\big>
=\big<\langle 2\delta+\theta,\theta \rangle \beta_1,\beta_2\big>
=\langle 2\delta+\theta,\theta\rangle \langle \beta_1, \beta_2\rangle.
\end{align*}	
The lemma follows.
\end{proof}

\begin{corollary}
$\Psi|_{\ker K}:\ker K\rightarrow (W_2)_0$ is an isomorphism of vector spaces.
\end{corollary}

\begin{proof}
Since $K$ is surjective, $\dim\ker K=\dim \mathrm{Sym}^2\mathfrak h-1$.
On the other hand, $(L_2)_0=(W_2)_0\oplus V(0)$, and $\dim V(0)=1$, so by
Proposition \ref{Prop:I^2_0}, $\dim(W_2)_0=\dim(L_2)_0-1=\dim\ker K$.
Then by Lemma \ref{lem: v in W_2}, $\Psi^{-1}\big((W_2)_0\big)\subseteq\ker K$,
and thus $\Psi^{-1}|_{\ker K}$ is an isomorphism.
\end{proof}

\subsubsection{D-module of graded traces in the type DE cases}
Now, we fix the notation $J_0^2:=J_0\cap \mathcal{U}^2$. We consider $\mathfrak{g}$ is Lie algebra of DE type.
Recall that $\beta$ is the symmetrization map given in Example \ref{example: quantization}.

\begin{lemma}\label{lem: beta(v)}
If $v\in (W_2)_0$, then $\beta(v)\in J^2_0$.
\end{lemma}

\begin{proof}
Suppose 	$v\in (W_2)_0$. Recall that $W_2=\oplus_{i=1}^k V(\lambda_i)$, and thus
$v=v_1+\cdots+v_k$ such that $v_i\in V(\lambda_i)$.

Since $V(\lambda_i)$ is irreducible, there exists
an element $T_i\in \mathcal U(\g)$ such that $v_i=T_i\cdot v_{-\lambda_i}$.
Thus $v=\sum_{i=1}^k T_i(v_{-\lambda_i})$, and
since by Theorem \ref{Garfinkle lemma1 Dn}, $\beta(v_{-\lambda_i})\in J_0^2$, we have
\[
\beta(v)=\sum_{i=1}^k\beta\circ T_i(v_{-\lambda_i})=\sum_{i=1}^k T_i\circ \beta(v_{-\lambda_i})\in J_0^2.\qedhere
\]
\end{proof}

\begin{proposition}\label{prop:basis of J20 DE}
For $\mathfrak{g}$ is of DE type, $J_0^2$ is spanned by vectors of the following form
\begin{equation}
    h_{\alpha_i}h_{\alpha_j}+\dfrac{1}{2}\sum_{\gamma\in\Delta^{+}}\langle
\alpha_i,\gamma\rangle\langle \alpha_j,\gamma\rangle
(X_\gamma Y_{\gamma}+Y_\gamma X_{\gamma})+\dfrac{1}{4}|\Gamma|\<\alpha_i, \alpha_j\>,
\end{equation}
where $\{\alpha_1, \ldots, \alpha_n\}$ is the set of simple root of $\mathfrak{g}$ and $i, j$ run 
from $1$ to $n$.
\end{proposition}
\begin{proof}
By Corollary \ref{cor: (L_2)_0}, we can choose a basis of
$\mathrm{Sym}^2\mathfrak h$: $\phi_1,\cdots,\phi_l$ for $l=
\dim \mathrm{Sym}^2\mathfrak h$,
such that $\Psi(\phi_1),\cdots,\Psi(\phi_{l-1})$ are
a basis of $(W_2)_0$ and $\Psi(\phi_l)$ is a basis of $V(0)$ such that
$\beta(\Psi(\phi_l))=C$.
	
By Lemma \ref{lem: beta(v)}, for $i=1,\cdots,l-1$, $\beta\big(\Psi(\phi_i)\big)\in J_0^2$, which a linear combination of vectors
\begin{eqnarray*}
  h_{\alpha_i}h_{\alpha_j}+\dfrac{1}{2}\sum_{\gamma\in\Delta^{+}}\langle
\alpha_i,\gamma\rangle\langle \alpha_j,\gamma\rangle
(X_\gamma Y_{\gamma}+Y_\gamma X_{\gamma}).
\end{eqnarray*}
 By Lemma \ref{lem: v in W_2}, $K(\phi_i)=0$ ,
which means that, for $i=1,\cdots,l-1$, $\beta\big(\Psi(\phi_i)\big)$ is also a linear combination of vectors
\begin{eqnarray}\label{basis of J02}
    h_{\alpha_i}h_{\alpha_j}+\dfrac{1}{2}\sum_{\gamma\in\Delta^{+}}\langle
\alpha_i,\gamma\rangle\langle \alpha_j,\gamma\rangle
(X_\gamma Y_{\gamma}+Y_\gamma X_{\gamma})+\dfrac{1}{4}|\Gamma|\<\alpha_i, \alpha_j\>.
\end{eqnarray}

Now for $\beta(\Psi(\phi_l))=C$, it gives the following element in $J_0^2$ 
by Theorem \ref{Garfinkle lemma1 Dn}
\begin{eqnarray*}
C-c_\lambda=\sum_{\gamma\in\Delta^+}(X_\gamma Y_\gamma+
Y_\gamma X_\gamma)+\sum_{i=1}^n h_ih_i^{\vee}-c_\lambda .
\end{eqnarray*}	
Combining
the table in \S 2.1 with Theorem \ref{Garfinkle lemma1 Dn},
we get $\dfrac{c_\lambda}{n}=-\dfrac{|\Gamma|}{4}$,
where $n=rank (\mathfrak h)$, and then by a direct calculation, 
\begin{equation*}
 C-c_\lambda=\sum_{i=1}^n h_ih_i^{\vee}+\sum_{\gamma\in\Delta^+}(X_\gamma Y_\gamma+
Y_\gamma X_\gamma)+\dfrac{n}{4}|\Gamma|,
\end{equation*}
which is also a linear combination of vectors in \eqref{basis of J02}.

By Proposition \ref{Prop:I^2_0}, we know $J_0^2$ is 
spanned by  $\beta\big(\Psi(\phi_i)\big)$, 
$i=1,\cdots,l-1$ and $\beta(\Psi(\phi_l))-c_\lambda$. 
Denote by $\widetilde{V}$ the subspace of $\mathcal{U}_0^2$ 
spanned by the vectors in the form \eqref{basis of J02}, 
then  $J_0^2\subset \widetilde{V}$. Furthermore, 
$\dim J_0^2=\dim \widetilde{V}=\dim \mathrm{Sym}^2\mathfrak h$, 
so $J_0^2= \widetilde{V}$. The proof is completed.
\end{proof}

\begin{theorem}\label{thm: D-module of DE}
    For $\mathfrak{g}$ is of DE type, $Q(\A)\cong S_{reg}\otimes (\mathbb{C}\oplus \mathfrak{h})$ with the following relation
    \begin{equation}\label{eq:quantum product DE}
        h_{\alpha}\cdot (1\otimes h_{\alpha'})=-\frac{\hbar^2}{4}|\Gamma|\langle \alpha ,\alpha'\rangle
+\sum_{\gamma\in\Delta^+}\frac{\hbar}{2}\langle\alpha,\gamma\rangle\langle\alpha',
\gamma\rangle\dfrac{1+q^\gamma}{1-q^\gamma}  h_\gamma
    \end{equation}
\end{theorem}

\begin{proof}
By \eqref{eq:Q(A)}, $Q(\A)\cong S_{reg}\otimes 
Rees(\mathcal{U}(\mathfrak h)/\mathcal{J}_{\mathfrak{h}})$. 
And $\mathcal{J}_{\mathfrak{h}}$ is generated by $\kappa(1\otimes J^2_0)$ 
due to Proposition \ref{lem: Q(A) structure}. 
By Proposition \ref{prop:basis of J20 DE} and the definition of $\kappa$,  
$\kappa(1\otimes J^2_0)$ is spanned by 
\begin{align*}
& \kappa\Big( h_{\alpha_i}h_{\alpha_j}+\dfrac{1}{2}\sum_{\gamma\in\Delta^{+}}\langle
\alpha_i,\gamma\rangle\langle \alpha_j,\gamma\rangle
(X_\gamma Y_{\gamma}+Y_\gamma X_{\gamma})+\dfrac{1}{4}|\Gamma|\<\alpha_i, \alpha_j\>\Big)\\
=& h_{\alpha_i}h_{\alpha_j}+\frac{1}{4}|\Gamma|\langle \alpha_i ,\alpha_j\rangle
-\sum_{\gamma\in\Delta^+}\frac{1}{2}\langle\alpha_i,\gamma\rangle\langle\alpha_j,
\gamma\rangle\dfrac{1+q^\gamma}{1-q^\gamma}  h_\gamma.
\end{align*}
This completes the proof.
\end{proof}

\subsubsection{D-module of graded traces in the type A case}

Now we consider the case $\mathfrak{g}$ is a Lie algebra of type A.
Recall the bilinear map $(-,-)_c:\mathfrak h\otimes\mathfrak h\rightarrow \mathfrak h$ defined in Definition \ref{def: bilinear c}.
 Since it is a symmetric, it induces a linear map
 $K_c:\mathrm{Sym}^2\mathfrak h\rightarrow\mathfrak h$.
 Recall that from
 \S \ref{subsect:JosephintypeA}, the subrepresentation $L_2$ has a further $\mathfrak g$-module decomposition
\[
(L_2)_0=V(\theta+\alpha)_0\oplus V(\theta)_0\oplus V(0)_0.
\]
Then we have the following lemma:

\begin{lemma}\label{lem: v in V_2}
Suppose $v\in V(\theta+\alpha)_0\oplus V(0)_0$ in 
Proposition \ref{structure of J}, then $K_c\circ \Psi^{-1}(v)=0$.
\end{lemma}

\begin{proof}
Set $V(\theta)=V_1$, $V(\theta+\alpha)\oplus V(0)=V_2$ in Lemma \ref{lem: Casimir}, we have
\begin{eqnarray*}
v=\big(C-(\langle 2\delta,\theta\rangle+\langle \theta,\theta\rangle)\mathrm{Id}\big)(u),
\end{eqnarray*}
for some $u\in L_2$.
	
By Lemma \ref{lem: element of W},  without loss of generality,
assume \[u=h_{\beta_1}h_{\beta_2}
+\sum_{\gamma\in\Delta^{+}}\langle \beta_1,
\gamma\rangle\langle \beta_2,\gamma\rangle X_\gamma Y_{\gamma},\]
then we have:
\begin{align*}
\Psi^{-1}(v)&=2\langle 2\delta+\theta,\theta\rangle h_{\beta_1}h_{\beta_2}
-2\sum_{\gamma\in\Delta^{+}}\langle \beta_1,\gamma\rangle\langle \beta_2,\gamma\rangle h_\gamma^2,\\
K_c\circ\Psi^{-1}(v)&=2\big(\langle 2\delta+\theta,\theta\rangle K_c(\beta_1,\beta_2)
-\sum_{\gamma\in\Delta^{+}}\langle \beta_1,\gamma\rangle\langle \beta_2,\gamma\rangle(\gamma,\gamma)_c\big).
\end{align*}
We only need to check the following identity on a basis of $\mathrm{Sym}^2\mathfrak h$:
\begin{eqnarray*}
\langle\delta,\theta\rangle(\beta_1,\beta_2)_c
=\sum_{\gamma\in\Delta^{+}}\langle \beta_1,\gamma\rangle\langle \beta_2,\gamma\rangle(\gamma,\gamma)_c.
\end{eqnarray*}
In fact, choose a basis containing the following vectors $h_{\alpha_i}h_{\alpha_j}$ ($j-i\geq 2$),
$h_{\alpha_i}h_{\alpha_{i+1}}$ ($1\leq i\leq n-1$) and
$h_{\alpha_i}^2$ ($1\leq i\leq n$). It is straightforward to check 
the above identity holds on these vectors. We leave it to the readers.
\end{proof}	

Now consider a subspace $J_\theta\subseteq J_0^2$, which is defined by
\[
J_\theta:=\mathcal{U}(\g)\cdot\big(\beta(v_{-\theta})-y\big)\cap J_0^2.
\]
We have:

\begin{lemma}\label{lem: beta(v)-y}
For $v\in V(\theta)_0$, there is a unique $y_v\in
\mathcal{U}^1(\g)\cap \mathcal{U}_0(\g)\cong \mathfrak h$
such that $\beta(v)+y_v$ is in $J_\theta.$
\end{lemma}

\begin{proof}
Since $V(\theta)$ is irreducible, there exists an element
$L\in \mathcal{U}(\g)$ such that $v=L(v_{-\theta})$.
Hence $\beta(v)=\beta\circ L(v_{-\theta})=L(\beta(v_{-\theta}))$.
By Lemma \ref{Vtheta}, $L(\beta(v_{-\theta})-y))\in J_\theta$.
Denote by $y_v:=-L(y)$, then $\beta(v)+y_v\in J_\theta$.
	
Assume there is another $\tilde{y}_v$ such that $\beta(v)+\tilde{y}_v\in J_\theta$,
then $y_v-\tilde{y}_v\in J_0^1$. However, from the proof of Lemma \ref{lemma:dimJU2},
$J_0^1=J_0\cap\mathcal{U}^1\subseteq J\cap \mathcal{U}^1=\{0\}$, we get $\tilde{y}_v=y_v$.
\end{proof}

By Lemma \ref{lem: beta(v)-y}, we get a symmetric bilinear map
\begin{eqnarray*}
	K_Q:\Psi^{-1}(V(\theta)_0)\rightarrow \mathfrak h:
	\Psi^{-1}(v) \mapsto  y_v,
\end{eqnarray*}
and then we have:

\begin{lemma}\label{lem: K_Q}
Restricted on $\Psi^{-1}\big(V(\theta)_0\big)$, $K_Q=-\left(\dfrac{n+1}{2}+z\right)K_c$.	
\end{lemma}

\begin{proof}
Recall that we have obtained $\beta(v_{-\theta})-y$ in \eqref{vinU}. For $k=2,3,\cdots,n$, by applying	
$ad_{X_{12\cdots k-1}}\circ ad_{X_{k,k+1,\cdots n}}$ to \eqref{vinU}, we obtain
\begin{align*}
&-(n+1)(h_k+h_{k+1}+\cdots+h_n)\cdot(h-1+h_2+\cdots+h_{k-1})\\
&+\big(\sum_{l=1}^n Y_\theta(2l-1-n)h_l\big)(h_1+h_2+\cdots+h_{k-1})
+\textup{terms of }X_\gamma Y_\gamma\\
&-\dfrac{n-1}{2}(n+1+2z)(h_1+h_2+\cdots+h_{k-1}) \in J_0^2.
\end{align*}
Then,
\begin{align*}
&K_Q\bigg(-(n+1)(h_k+h_{k+1}+\cdots+h_n)\cdot(h-1+h_2+\cdots+h_{k-1})\\
&+\big(\sum_{l=1}^n Y_\theta(2l-1-n)h_l\big)(h_1+h_2+\cdots+h_{k-1})\bigg)\\
=&-\dfrac{n-1}{2}(n+1+2z)(h_1+h_2+\cdots+h_{k-1}).
\end{align*}
On the other hand,
\begin{align*}
&K_c\bigg(-(n+1)(h_k+h_{k+1}+\cdots+h_n)\cdot(h-1+h_2+\cdots+h_{k-1})\\
&+\big(\sum_{l=1}^n Y_\theta(2l-1-n)h_l\big)(h_1+h_2+\cdots+h_{k-1})\bigg)\\
=&(n-1)(\omega_1^*+\omega_{k-1}^*-\omega_k^*)\\
=&(n-1)(h_1+h_2+\cdots+h_{k-1}).
\end{align*}
Comparing the above two terms,
we see that they differ up to a scaler $-\left(\frac{n+1}{2}+z\right)$.

Now for $k=1$, denote $ad_{X_{12\cdots k-1}}=\mathrm{Id}$, by applying	
$ad_{X_{12\cdots n}}$ to \eqref{vinU}, we obtain the same result.

Observe that
the following $n$ vectors in $\mathrm{Sym}^2\mathfrak h$ are linearly independent
\begin{eqnarray*}
	&&v_k:=-(n+1)(h_k+h_{k+1}+\cdots+h_n)(h_1+h_2+\cdots+h_{k-1}),\ k=2,3,\cdots,n\\
	&&v_1:=(h_1+h_2+\cdots+h_n)\big(\sum_{l=1}^n (2l-1-n)h_l\big),
\end{eqnarray*}
and $\dim \Psi^{-1}(V(\theta)_0)=\dim(V(\theta)_0)=n$.
Since $V(\theta)_0\cong\mathfrak h$ and
$v_1,v_2,\cdots,v_n$ is a basis of $\Psi^{-1}(V(\theta)_0)$,
we have $K_Q=-\left(\dfrac{n+1}{2}+z\right)K_c$ on
the whole vector space $\Psi^{-1}\big(V(\theta)_0\big)$.
\end{proof}

\begin{proposition}
 For $\mathfrak{g}$ is of $A_n$ type, $J_0^2$ is spanned by vectors of the following form
  \begin{equation}
    h_{\alpha_i}h_{\alpha_j}+\dfrac{1}{2}\sum_{\gamma\in\Delta^{+}}\langle
\alpha_i,\gamma\rangle\langle \alpha_j,\gamma\rangle
(X_\gamma Y_{\gamma}+Y_\gamma X_{\gamma})-(\dfrac{n+1}{2}+z)(\alpha_i, \alpha_j)_c+\dfrac{1}{4}|\Gamma|\<\alpha_i, \alpha_j\>,
  \end{equation}
  where $\{\alpha_1, \ldots, \alpha_n\}$ is the set of simple roots of $\mathfrak{g}$ and $i, j$ run over $1$ to $n$.
\end{proposition}
\begin{proof}The proof is similar as that of Proposition \ref{prop:basis of J20 DE}.
By Corollary \ref{cor: (L_2)_0}, we can choose a basis of
$\mathrm{Sym}^2\mathfrak h$: $\phi_1,\cdots,\phi_l$, where
$l=\dim (n^2-n-2)/2$, and $\psi_1,\cdots,\psi_n$,
such that $\Psi(\phi_1),\cdots,\Psi(\phi_{l-1})$ are a basis of
$V(\theta+\alpha)_0$,
$\Psi(\phi_l)$ is a basis of $V(0)_0$ such that $\beta(\Psi(\phi))=C$,
and $\Psi(\psi_1),\cdots,\Psi(\psi_n)$
is a basis of $V(\theta)_0$.

First, by Theorem \ref{Prop:GaJosephAn} and Lemma \ref{lem: beta(v)}, 
for $i=1, 2, \ldots, l-1$, $\beta(\Psi(\phi_i))\in J_0^2$ is a linear combination of the following vectors
\begin{eqnarray*}
 h_{\alpha_i}h_{\alpha_j}+\dfrac{1}{2}\sum_{\gamma\in\Delta^{+}}\langle
\alpha_i,\gamma\rangle\langle \alpha_j,\gamma\rangle
(X_\gamma Y_{\gamma}+Y_\gamma X_{\gamma}),
\end{eqnarray*}
By Lemmas \ref{lem: v in W_2} and \ref{lem: v in V_2},  
since $ \Psi(\phi_i)\in V(\theta+\alpha)_0$, we have
then $K(\phi_i)=0$ and $K_c(\phi_i)=0$. Thus
$\beta(\Psi(\phi_i))$ is also a linear combination of the following vectors
\begin{equation}\label{basis of J02 A}
   h_{\alpha_i}h_{\alpha_j}+\dfrac{1}{2}\sum_{\gamma\in\Delta^{+}}\langle
\alpha_i,\gamma\rangle\langle \alpha_j,\gamma\rangle
(X_\gamma Y_{\gamma}+Y_\gamma X_{\gamma})-\left(\dfrac{n+1}{2}+z\right)
(\alpha_i, \alpha_j)_c+\dfrac{1}{4}|\Gamma|\<\alpha_i, \alpha_j\>.
\end{equation}

Second, by Lemma \ref{lem: beta(v)-y}, we know that
$\beta(\Psi(\psi_j))+y_{\Psi(\psi_j)}\in J_{0}^2$. More explicitly, 
it is a linear combination of the following vectors
\begin{equation*}
  h_{\alpha_i}h_{\alpha_j}+\dfrac{1}{2}\sum_{\gamma\in\Delta^{+}}\langle
\alpha_i,\gamma\rangle\langle \alpha_j,\gamma\rangle
(X_\gamma Y_{\gamma}+Y_\gamma X_{\gamma})+K_Q(h_{\alpha_i}h_{\alpha_j}).
\end{equation*}
 By Lemmas \ref{lem: v in W_2} and \ref{lem: K_Q}, 
 since $ \psi_j\in V(\theta)_0$, $K(\psi_j)=0$ and 
$K_Q( \psi_j)=-\left(\dfrac{n+1}{2}+z\right)K_c(\psi_j)$.
Thus $\beta(\Psi(\psi_j))+y_{\Psi(\psi_j)}$ is also a linear 
combination of vectors of the form \eqref{basis of J02 A}.
	
Finally, the argument on 
$\beta(\Psi(\phi_l))-c_\lambda$ is similar to that in Proposition 
\ref{prop:basis of J20 DE}. And the remaining part of the proof is by 
the same dimension counting argument as in the proof of 
Proposition \ref{prop:basis of J20 DE}.
\end{proof}

\begin{theorem}\label{thm:quantum action An}
For $\mathfrak{g}$ is of $A_n$ type, $Q(\A)\cong 
S_{reg}\otimes (\mathbb{C}\oplus \mathfrak{h})$ with the following relation 
\begin{equation}\label{eq:quantum action An}
h_{\alpha}\cdot (1\otimes h_{\alpha'})=-\frac{\hbar^2}{4}|\Gamma|\langle \alpha ,\alpha'\rangle
+(\dfrac{n+1}{2}+z)\hbar(\alpha, \alpha')_c
+\sum_{\gamma\in\Delta^+}\frac{\hbar}{2}\langle\alpha,\gamma\rangle\langle\alpha',
\gamma\rangle\dfrac{1+q^\gamma}{1-q^\gamma}  h_\gamma.
\end{equation}
\end{theorem}

\begin{proof}
Similar to the proof of Theorem \ref{thm: D-module of DE}.
\end{proof}

\subsection{Proof of Theorems \ref{maintheorem0} and \ref{maintheorem}}\label{sect:proofofmainthm}

The quantum Hikita conjecture, proposed
by Kamnitzer, McBreen and Proudfoot in \cite{KMP},
is stated as follows.

\begin{conjecture}[The quantum Hikita conjecture]
Suppose $X$ and $X^!$ are symplectic dual to each other.
Then there is an isomorphism
$M_{\mathrm{reg}}(X)\cong Q_{\mathrm{reg}}(X^!)$ of graded modules over
$R_{\mathrm{reg}}(X)\cong E_{\mathrm{reg}}(X^!)$ sending $1\in M_{\mathrm{reg}}(X)$
to $1\in Q_{\mathrm{reg}}(X^!)$.
\end{conjecture}

The Kleinian singularities
and the minimal nilpotent orbits are expected to
be symplectic dual to each other.
Recall that by
Remark \ref{rem:quantumDmoduleinADEcase}, in the ADE singularities case, the quantum D-module
is nothing but the equivariant quantum cohomology, and thus the
above conjecture is exactly Theorem \ref{maintheorem0} of the current paper.

\begin{proof}[Proof of Theorem \ref{maintheorem0}]
In the $A_n$ case,  let
\begin{equation}\label{iso:typeAcase}
t_1\mapsto-\dfrac{z\hbar}{n+1},\ t_2\mapsto
\dfrac{z+n+1}{n+1}\hbar,\ e_\alpha\mapsto h_\alpha,\ e_{\alpha'}\mapsto h_{\alpha'}.
\end{equation}
By comparing \eqref{qcohAn} with \eqref{eq:quantum action An}, we get the isomorphism.
In the DE case, let
\begin{eqnarray*}
	\hbar \mapsto 2t,\  e_\alpha\mapsto h_\alpha,\ e_{\alpha'}\mapsto h_{\alpha'}.
\end{eqnarray*}
By comparing \eqref{eq: quantum coh DE} with \eqref{eq:quantum product DE}, we get the
isomorphism.
\end{proof}

\begin{proof}[Proof of Theorem \ref{maintheorem}]
Let $q=0$. Then
the quantum D-modules
reduces to the equivariant cohomology
and the D-module of graded traces
$Q(\A)$ reduces to $B(\A)$ (see Remarks \ref{rem:redofequivquantcoh}
and \ref{re:reductionofQ(A)}).
Also, by Lemma \ref{lemma:B-alg str}, $B(\A)$ is generated in degree 2,
and thus by \cite[Remark 5.3]{KMP}, Theorem \ref{maintheorem0} reduces to Theorem \ref{maintheorem}.
\end{proof}

\begin{remark}\label{rem:JosephidealintypeA}
In the above two theorems, for type A Lie algebras,
we have assumed the Joseph ideals $J^z$ are parametrized
by the formal parameter $z$, which is also called
the {\it K\"ahler parameter}.
If we specify
$z=-(n+1)/2$ in the quantization of the minimal nilpotent orbits
in the $A_n$ Lie algebra, then for all ADE singularities,
the isomorphisms in the above theorems have a uniform form
$$
\mathrm{QH}^\bullet_{\mathbb C^\times}(\widetilde{\mathbb C^2/\Gamma})
\cong Q\big(\mathscr A[\overline{\mathcal O}_{min}]\big)
\quad\mbox{and}\quad
\mathrm{H}^\bullet_{\mathbb C^\times}(\widetilde{\mathbb C^2/\Gamma})
\cong B\big(\mathscr A[\overline{\mathcal O}_{min}]\big).
$$
\end{remark}

\section{Generalization to BCFG type singularities}\label{sect:BCFG}

In this section, we generalize the isomorphisms in Theorems \ref{maintheorem0}
and \ref{maintheorem}
to the cases of BCFG type singularities.

First recall that for the ADE type Lie algebras, the minimal nilpotent orbits are the same
as the minimal special nilpotent orbits, but for the other types of Lie
algebras, they are different.
The Lusztig-Spaltenstein duality says that for BCFG type Lie algebras,
the minimal {\it special} orbits are dual to the subregular nilpotent orbits.
Also, recall that for BC type Lie algebras, they are Langlands dual to each other,
and for simple Lie algebras of the rest types, they are Langlands self-dual.


\subsection{Quantization of the minimal special nilpotent orbits}\label{subsect:quantms}

In this subsection, we denote by $\mathcal O_{ms}$ the minimal special
nilpotent orbit in a Lie algebra of BCFG type.
A theorem of Panyushev in \cite{P} says that
the normalization $\tilde{\mathcal{O}}_{ms}$ of the closure of
$\mathcal{O}$ has symplectic singularities, and hence it makes
sense to study the symplectic duality for $\tilde{\mathcal O}_{ms}$.

Let $\mathfrak g$ be a simple Lie algebra. Automorphisms of the Dynkin diagram
of $\mathfrak g$ induce automorphisms of the root vectors of $\mathfrak g$
and hence of $\mathfrak g$.
Dynkin diagrams with nontrivial automorphisms are only those of
$A_{n} (n>1)$, $D_{n} (n>2)$ and $E_6$. They are given by (c.f. \cite[\S7.9]{Kac}):
\begin{enumerate}
\item[$-$] For $A_n$, the automorphism group is $\mathbb Z_2$
with the nontrivial automorphism given by $\alpha_i\mapsto
\alpha_{n+1-i}$;
\item[$-$] For $D_{4}$, there are two types
of automorphisms, the first automorphism group is $\mathfrak S_3$
which is generated by the permutations
of $\alpha_1,\alpha_3$ and $\alpha_4$, while the second automorphism group is
$\mathbb Z_2$ given by permuting
$\alpha_3$ and $\alpha_4$. For $D_{n+1}$, $n\ge 2$ and $n\ne 3$,
the automorphism group is $\mathbb Z_2$, which is
given by permuting $\alpha_{n}$ and $\alpha_{n+1}$.
\item[$-$] For $E_6$, the automorphism is $\mathbb Z_2$,
given by permuting $\alpha_1$ and $\alpha_6$,
$\alpha_3$ and $\alpha_5$, with $\alpha_2$ and $\alpha_4$ fixed.
\end{enumerate}
Taking the quotients of the Dynkin diagrams by the above
group actions, we obtain the Dynkin diagrams
of Lie algebras of the other type, which is summarized in the following table:
\begin{center}
\begin{tabular}{p{3.5cm}p{3cm}p{3cm}}
\hline
Original diagram &Automorphism & Quotient diagram\\
\hline\hline
$D_{n+1}$ & $\mathbb Z_2$ & $B_n$ \\
$A_{2n-1}$&$\mathbb Z_2$ &$ C_n $\\
$A_{2n}$&$\mathbb Z_2$ &$ A_n $\\
$E_6$&$\mathbb Z_2$& $F_4$\\
$D_4$&$\mathfrak S_3$&$G_2$ \\
 \hline
\end{tabular}
\end{center}
The above actions lift to automorphisms $\nu$ of $\mathfrak g$ defined by
$\nu(X_\alpha)=X_{\nu(\alpha)}$ and $\nu(Y_\alpha)
=Y_{\nu(\alpha)}$. The following theorem is due to Brylinski and Kostant:

\begin{theorem}[{\cite{BK}}]
The minimal nilpotent orbit $\mathcal O_{min}$ in $D_{n+1}$, $A_{2n-1}$ and $E_6$
is the double cover of the minimal special nilpotent orbit $\mathcal O_{ms}$
of type $B_n$, $C_{2n}$ and $F_4$ respectively,
while the minimal nilpotent orbit $\mathcal O_{min}$ in
$D_4$ is the 6-fold cover of minimal special nilpotent orbit $\mathcal O_{ms}$ in $G_2$
with the deck transformation $\mathfrak S_3$.
\end{theorem}

The result is summarized by the following table:
\begin{center}
\begin{tabular}{p{3cm}p{3cm}p{4cm}}
\hline
Type of $\mathcal O_{ms}$ & Covering &  Deck transformation\\
\hline\hline
$\mathcal O_{ms}(B_n)$ & $\mathcal O_{min}(D_{n+1})$ & $\mathbb Z_2$\\
$\mathcal O_{ms}(C_n)$ & $\mathcal O_{min}(A_{2n-1})$ & $\mathbb Z_2$\\
$\mathcal O_{ms}(F_4)$ & $\mathcal O_{min}(E_6)$ & $\mathbb Z_2$\\
$\mathcal O_{ms}(G_2)$ & $\mathcal O_{min}(D_4)$ & $\mathfrak S_3$\\
 \hline
\end{tabular}
\end{center}
In other words, we have the following isomorphisms:
$$\begin{array}{ll}
\mathbb C[\tilde{\mathcal{O}}_{ms}(B_n)]=
\mathbb C[\mathcal{O}_{min}(D_{n+1})]^{\mathbb Z_2},
& \mathbb C[\tilde{\mathcal{O}}_{ms}(C_{n})]=
\mathbb C[\mathcal{O}_{min}(A_{2n-1})]^{\mathbb Z_2},\\[2mm]
\mathbb C[\tilde{\mathcal{O}}_{ms}(F_4)]= \mathbb C[\mathcal{O}_{min}(E_6)]^{\mathbb Z_2},
& \mathbb C[\tilde{\mathcal{O}}_{ms}(G_2)]= \mathbb C[\mathcal{O}_{min}(D_4)]^{\mathfrak S_3}.
\end{array}
$$
In \cite{Huang}, Huang studied the quantization of the minimal special nilpotent orbits
in these Lie algebras.
For Lie algebras of DE type, the Joseph ideals
are maximal and are stable under the
actions of $\mathbb Z_2$ or $\mathfrak S_3$.
For Lie algebras of type A, the Joseph
ideals $J^z$ are not unique, but we have the following.

\begin{proposition}
Suppose $\mathfrak g$ is the Lie algebra $A_{2n-1}$. Let $J^z$ be the Joseph
ideals parameterized by $z\in\mathbb C$.
Then there is a unique Joseph ideal which is stable
under the action of $\mathbb Z_2$.
More precisely, such an ideal is $J^{-n}$ in Theorem \ref{Prop:GaJosephAn}.
\end{proposition}

\begin{proof}
For the
$A_{2n-1}$ Lie algebra, recall from Theorem \ref{Prop:GaJosephAn}
that its Joseph ideals are generated by three types of elements
$\beta(v_0)$,
\eqref{vinU} and \eqref{vtheta0An}.
It is direct to check that, under the $\mathbb Z_2$-action,
$\beta(v_0)$ and \eqref{vtheta0An} 	
are mapped to elements in $J^z$.
Applying the nontrivial element of $\mathbb Z_2$ to
\eqref{vinU}, which is now
\begin{align}\label{Z2UAn0}
&-2n\big(Y_{2\cdots 2n-1}Y_1+Y_{3\cdots 2n-1}Y_{12}+\cdots
+Y_nY_{1\cdots 2n-1}\big)\notag\\
&+Y_\theta\left(\sum_{k=1}^{2n-1} (2k-2n)h_k+(2-2n)z\right),
\end{align}
we obtain
\begin{align}\label{Z2UAn}
& 2n\big(Y_{2\cdots 2n-1}Y_1+Y_{3\cdots 2n-1}Y_{12}+\cdots
+Y_nY_{1\cdots 2n-1}-(2n-2)Y_\theta\big)\notag\\
&-Y_\theta\left(\sum_{k=1}^{2n-1} (2k-2n)h_k-(2-2n)z\right).
\end{align}
Comparing
\eqref{Z2UAn0} with \eqref{Z2UAn} we see
that \eqref{Z2UAn} is an element in $J^z$ if and only if $z=-n$.
\end{proof}

\begin{convention}[Compare with Remark \ref{rem:JosephidealintypeA}]
{\it From now on},
we take the Joseph ideal for $A_{2n-1}$
to be $J^{-n}$.
And on the dual side, the equivariant cohomology of the minimal
resolution of the $A_{2n-1}$ singularity is the $\mathbb C^\times$-equivariant
cohomology.
\end{convention}

Now we study the D-module of graded
traces $Q(\mathscr A)$ for these nilpotent orbits.
Let $\Lambda$ be the index of simple roots
of an ADE type Lie algebra $\mathfrak g$.
Denote by $\bar{\mathfrak g}$ the Lie algebra
corresponding to the Dynkin diagram obtained
from the one of $\mathfrak g$ by modulo its automorphism
described above. Denote the automorphism group by $\Phi$.
The simple roots of $\bar{\mathfrak g}$
is indexed by $\bar\Lambda:=\Lambda/\Phi$.
Then there is a projection of simple roots
\begin{eqnarray*}
	\Lambda \rightarrow \bar\Lambda:
	i \mapsto  [i].
	\end{eqnarray*}
And we have a map between Chevalley basis of $\bar{\mathfrak{g}}$ and $\mathfrak{g}$ (see \cite[Proposition 7.9]{Kac})
\begin{equation}\label{Kacconstruction}
X_{[i]}\mapsto\sum_{\nu\in\Phi}
X_{\nu(i)} ,\,
 Y_{[i]}\mapsto
\sum_{\nu\in\Phi}Y_{\nu(i)},\quad i\in\Lambda,
\end{equation}
which naturally induces a Lie algebra isomorphism $\bar{\mathfrak{g}}\cong \mathfrak{g}^\Phi$ and an embedding $\mathcal{U}(\bar{\mathfrak{g}})\hookrightarrow (\mathcal{U}(\mathfrak{g}))^{\Phi}$.

Furthermore, the above embedding will induce an isomorphism between quantization of nilpotent orbit $\A=\mathcal{U}(\mathfrak{g})/J$. The following theorem is due to Huang:

\begin{lemma}[Huang {\cite[page 318]{Huang}}]\label{lem: Huang}
The  following algebras
\begin{eqnarray*}
(\A_{D_{n+1}})^{\mathbb Z_2},
\ (\A_{A_{2n-1}})^{\mathbb Z_2},
\ (\A_{E_6})^{\mathbb Z_2},
\ (\A_{D_4})^{\mathfrak S_3}
\end{eqnarray*}
are the filtered quantizations of
$\tilde{\mathcal{O}}_{ms}(B_n), \tilde{\mathcal{O}}_{ms}(C_n),
\tilde{\mathcal{O}}_{ms}(F_4), \tilde{\mathcal{O}}_{ms}(G_2)$
respectively.
\end{lemma}

Let us fix a notation: for $\mu=\sum_{i=1}^n k_i\alpha_i$,
$\alpha_i$ is a simple root, let $q^{\mu}:=\prod_{i=1}^nq_i^{k_i}$.
Now in the $Q(\mathscr A)$ of the minimal special nilpotent
orbit in $\mathfrak g$,
specifying $q_{i}=q_{[i]}$, we obtain the following.

\begin{proposition}\label{prop:minimalspecialinBCFG}
Denote by $\tilde{\mathcal{O}}_{ms}(B_n)$,
$\tilde{\mathcal{O}}_{ms}(C_n)$,
$\tilde{\mathcal{O}}_{ms}(F_4)$,
$\tilde{\mathcal{O}}_{ms}(G_2)$ the normalization
of the minimal special nilpotent orbits in Lie algebras of BCFG type respectively.
Then we have the following isomorphisms of D-modules of graded traces:
\begin{align*}
Q(\mathscr A[\tilde{\mathcal{O}}_{ms}(B_n)])
&\cong
\left(Q(\mathscr A[\overline{\mathcal O}_{min}(D_{n+1})])|_{q_{i}\rightarrow q_{[i]}}\right)^{\mathbb Z_2},\\
Q(\mathscr A[\tilde{\mathcal{O}}_{ms}(C_n)])
&\cong \left(Q(\mathscr A[\overline{\mathcal O}_{min}(A_{2n-1})])
|_{q_{i}\rightarrow q_{[i]}}\right)^{\mathbb Z_2},\\
Q(\mathscr A[\tilde{\mathcal{O}}_{ms}(F_4)])
&\cong \left(Q(\mathscr A[\overline{\mathcal O}_{min}(E_6)])
|_{q_{i}\rightarrow q_{[i]}}\right)^{\mathbb Z_2},\\
Q(\mathscr A[\tilde{\mathcal{O}}_{ms}(G_2)])
&\cong
\left(Q(\mathscr A[\overline{\mathcal O}_{min}(D_4)])
|_{q_{i}\rightarrow q_{[i]}}\right)^{\mathfrak S_3}.
\end{align*}
\end{proposition}	

To simplify the notations, let us denote the above isomorphisms to be
\begin{eqnarray*}
Q(\mathscr A_{BCFG})
\cong
\left(Q(\mathscr A_{ADE})|_{q_{i}\rightarrow
q_{[i]}}\right)^{\Phi}.	
\end{eqnarray*}
We prove the isomorphisms
in the rest of this subsection.

Recall that $Q(\A)=\A_0/\mathcal{I}_q$, where
$$\mathcal{I}_q
=\sum_{\mu\in\mathbb N\Delta^{+}}
S_{reg}\cdot \left\{a_\mu a_{-\mu}-q^\mu a_{-\mu}a_{\mu} | a_{\mu}\in\mathscr{A}_\mu,
a_{-\mu}
\in\mathscr{A}_{-\mu}\right\}\subset S_{reg}\otimes\mathscr A_0.$$

To avoid confusion, we denote by $\mathcal I_{[q]}$ the submodule
\[
\sum_{\mu\in \mathbb{N}\Delta^{+}}
S_{reg}\cdot \left\{1\otimes ab-q^{[\mu]}\otimes ba | a\in\mathscr{A}_\mu, b
\in\mathscr{A}_{-\eta}, [\mu]=[\eta]\right\}\subset S_{reg}\otimes\mathscr A_{[0]}.\]
\begin{lemma}\label{lem: weight change}
$(S_{reg}\otimes(\mathscr A_{ADE})_0)/\mathcal{I}_q
\cong(S_{reg}\otimes(\mathscr A_{ADE})_{[0]})/\mathcal{I}_{[q]}$,
\end{lemma}
\begin{proof}
The embedding $S_{reg}\otimes(\mathscr A_{ADE})_0\hookrightarrow S_{reg}\otimes\mathscr A_{[0]}$ and $\mathcal{I}_q\hookrightarrow I_{[q]}$ naturally gives us a morphism 
$$(S_{reg}\otimes(\mathscr A_{ADE})_0)/\mathcal{I}_q
\rightarrow(S_{reg}\otimes(\mathscr A_{ADE})_{[0]})/\mathcal{I}_{[q]}.$$
The injectivity of the morphism is induced by $\mathcal{I}_q=(S_{reg}\otimes(\mathscr A_{ADE})_0)\cap \mathcal{I}_{[q]}$. In the following we prove the surjectivity.

By the weight decomposition, we have
$(\mathscr A_{ADE})_{[0]}=(\mathscr A_{ADE})_0
\oplus\mathscr A'$. If we take $a\in S_{reg}
\otimes(\mathscr A_{ADE})_{[0]}$, we have $a=a_0+a'$
for $a_0\in S_{reg}\otimes(\mathscr A_{ADE})_0$,
$a'\in\mathscr A'$. Now, denote by $(\mathscr A')^k$
the degree $k$ part of $\mathscr A'$.
We claim that $a'\in S_{reg}\otimes\mathcal I_{[q]}$ and prove this claim by
 induction on $k$.
\begin{enumerate}
\item[(1)] For $k=0$, it is easy to see the claim holds.
		
\item[(2)] Suppose for $a'\in S_{reg}\otimes(\A')^{k-1}$, the claim holds. Now consider $a'\in S_{reg}\otimes(\A')^k$.
Take $a_\mu a_{-\eta}\in S_{reg}\otimes(\A')^k$, then we have
\begin{eqnarray*}
a_\mu a_{-\eta}=\dfrac{q^{[\mu]}}
{q^{[\mu]}-1}[a_\mu,a_{-\eta}]-\dfrac{1}{q^{[\mu]}-1}
(a_\mu a_{-\eta}-q^{[\mu]} a_{-\eta}a_\mu).
\end{eqnarray*}
Since $[a_\mu,a_{-\eta}]\in S_{reg}\otimes(\A')^{k-1}$
and $a_\mu a_{-\eta}-q^{[\mu]} a_{-\eta}a_\mu\in I_{[q]}$,
the claim holds for $a'\in S_{reg}\otimes(\A')^k$.
Then we know that the morphism is surjective.
\end{enumerate}
\end{proof}

By Lemma \ref{lem: Huang}, $(\mathscr A_{BCFG})\cong
(\mathscr A_{ADE})^{\Phi}$, which is denoted by $f$. Furthermore, since $f$ is induced by $\mathcal{U}(\mathfrak{g}_{BCFG})\hookrightarrow \mathcal{U}(\mathfrak{g}_{ADE})^{\Phi}$, it preserves the weight decomposition of BCFG type. Then $S_{reg}\otimes(\mathscr A_{BCFG})_{0}\cong S_{reg}\otimes(\mathscr A_{ADE})^\Phi_{[0]}$. By Lemma \ref{lem: weight change} and  exactness of the $\Phi$-invariant functor, we have
\begin{eqnarray*}
(Q(\A_{ADE})|_{q_i\rightarrow 
q_{[i]}})^\Phi=(S_{reg}\otimes(\mathscr A_{ADE})_{[0]})/\mathcal{I}_{[q]})^\Phi=\big(S_{reg}\otimes(\mathscr A_{ADE})_{[0]})\big)^\Phi/(\mathcal{I}_{[q]})^\Phi.	
\end{eqnarray*}
Since $S_{reg}\otimes(\mathscr A_{BCFG})_{0}\cong S_{reg}\otimes(\mathscr A_{ADE})^\Phi_{[0]}$, and it is direct to check $f((\mathcal{I}_{BCFG})_q)\subseteq\mathcal{I}_{[q]}^\Phi$, we obtain the following: 
\begin{lemma}
  There is a natural surjection  
  $$p: Q(\A_{BCFG})\rightarrow (Q(\A_{ADE})|_{q_i\rightarrow 
q_{[i]}})^\Phi.$$
\end{lemma}

In the rest of this subsection we will further prove $p$ is an isomorphism with the help of gradient module.

First notice that
the PBW filtration of $\mathcal{U}$ induces a filtration on $\A$:
$$\A^0\subset \A^1\subset\cdots\subset \A^k\subset\cdots\subset \A,$$
which makes $Q(\mathscr A)$
into a filtered $R_{reg}$-module.
The degree of polynomial ring $\mathrm{Sym}\A^1_0$ induces a filtered structure on $R_{reg}=S_{reg}\otimes\mathrm{Sym}\A^1_0$:
\[
R^0\subset R^1\subset\cdots\subset R^k\subset\cdots\subset R_{reg}.
\]
This makes $Q(\A)$ into a filtered $R_{reg}$-module, and makes $\mathrm{gr}(Q(\A))$ into a graded $\mathrm{gr}(R_{reg})$-module.

\begin{lemma}\label{lem: grading of Q(A)}
There are isomorphisms
\begin{eqnarray*}
 \mathrm{gr}(Q(\A_{ADE}))\cong S_{reg,ADE}\otimes(\mathbb{C}\oplus\mathfrak{h}_{ADE})	
\end{eqnarray*}
and
\begin{eqnarray*}
 \mathrm{gr}\big((Q(\A_{ADE})|_{q_i\rightarrow q_{[i]}})^\Phi\big)\cong
 S_{reg,BCFG}\otimes(\mathbb{C}\oplus\mathfrak{h}_{BCFG}),	
\end{eqnarray*}
where the $S_{reg}-$action is free and $h_i(1\otimes h_j)=0$.
\end{lemma}

\begin{proof}
	This is a corollary of Theorem \ref{thm: D-module of DE} and Theorem \ref{thm:quantum action An}.
\end{proof}

Set $M:=\mathrm{gr}(\A_0)/I$, where $I:=\sum_{\mu\in\mathbb N\Delta^{+}}
S_{reg}\cdot \left\{\bar{a}_\mu \bar{a}_{-\mu}| \bar{a}_{\mu}\in \mathrm{gr}(\mathscr{A})_\mu, \bar{a}_{-\mu}
\in\mathrm{gr}(\mathscr{A})_{-\mu}\right\}$.

\begin{lemma}
There is a natural surjection $\pi:M\rightarrow\mathrm{gr}(Q(\A))$.
\end{lemma}

\begin{proof}
Notice that
$\mathrm{gr}(Q(\A))=\mathrm{gr}(\A_0)/\mathrm{gr}(\mathcal{I}_q)$,
and by definition of $I$, we have $I\subseteq\mathrm{gr}(\mathcal{I}_q)\subseteq \mathrm{gr}(\A_0)$.
\end{proof}

Recall that we have a surjection of $R_{reg,BCFG}$-modules:
\[
p:Q(\A_{BCFG})\rightarrow (Q(\A_{ADE})|_{q_i\rightarrow q_{[i]}})^\Phi,
\]
which induces a surjection
\[
\mathrm{gr}(p):\mathrm{gr}\big(Q(\A_{BCFG})\big)\rightarrow \mathrm{gr}\big((Q(\A_{ADE})|_{q_i\rightarrow q_{[i]}})^\Phi\big).
\]
Now, we consider the composition
\[
\mathrm{gr}(p)\circ\pi:M_{BCFG}\rightarrow\mathrm{gr}\big((Q(\A_{ADE})|_{q_i\rightarrow q_{[i]}})^\Phi\big).
\]

\begin{proposition}\label{prop:isomorphism of R module}
$\mathrm{gr}(p)\circ\pi$ is an isomorphism of $\mathrm{gr}(R_{reg,BCFG})$-modules.
\end{proposition}

\begin{proof}
We first compute $M_{BCFG}=\mathrm{gr}\big((\A_{BCFG})_0\big)/I$.
Recall that by Lemma \ref{lem: Huang},
$\A_{BCFG}\cong\A_{ADE}^\Phi$; taking the associated graded algebra, we get
\begin{align*}
\mathrm{Sym}(\g_{BCFG})/\mathrm{gr}(J_{BCFG})
&\cong\big(\mathrm{Sym}(\g_{ADE})/(\mathrm{gr}(J_{ADE})\big)^\Phi\\
&=\big(\mathrm{Sym}(\g_{ADE})\big)^\Phi/\big((\mathrm{gr}(J_{ADE})\big)^\Phi.
\end{align*}	
This isomorphism is induced by
$\mathrm{Sym}(\g_{BCFG})\hookrightarrow(\mathrm{Sym}(\g_{ADE}))^\Phi$,
and thus
\begin{equation}\label{eq:JBCFG}
\mathrm{gr}(J_{BCFG})=\mathrm{gr}(J_{ADE})^\Phi\cap\mathrm{Sym}(\g_{BCFG})
=\mathrm{gr}(J_{ADE})\cap\mathrm{Sym}(\g_{BCFG}).
\end{equation}
From the definition of $I$, we have
\begin{align}
\mathrm{gr}((\A_{BCFG})_0)/I
&=\mathrm{Sym}(\g_{BCFG})|_{\mathfrak{h}^{*}_{BCFG}}
/\mathrm{gr}((J_{BCFG})_0)|_{\mathfrak{h}^{*}_{BCFG}}\nonumber\\
&=\mathrm{Sym}(\mathfrak{h}_{BCFG})/\mathrm{gr}((J_{BCFG})_0)|_{\mathfrak{h}^{*}_{BCFG}}.\label{eq:A_BCFG/I}
\end{align}
In this quotient, we have
\begin{align*}
\mathrm{gr}((J_{BCFG})_0)|_{\mathfrak{h}^{*}_{BCFG}}
&\stackrel{\eqref{eq:JBCFG}}=\mathrm{gr}((J_{ADE})\cap\mathrm{Sym}(\g_{BCFG}))|_{\mathfrak{h}^{*}_{BCFG}}\\
&=\mathrm{gr}((J_{ADE})|_{\mathfrak{h}^{*}_{ADE}}\cap\mathrm{Sym}(\mathfrak{h}_{BCFG}),
\end{align*}
which, by Theorem \ref{Sh2}, is further equal to
\begin{align}
\mathrm{Sym}^{\geq 2}(\mathfrak{h}_{ADE})\cap\mathrm{Sym}(\mathfrak{h}_{BCFG})
=\mathrm{Sym}^{\geq 2}(\mathfrak{h}_{BCFG}).\label{eq:hADE}
\end{align}
Thus combining \eqref{eq:A_BCFG/I} and \eqref{eq:hADE} we get
\begin{equation}\label{eq:MBCFG}
M_{BCFG}
=\mathrm{Sym}(\mathfrak{h}_{BCFG})/\mathrm{Sym}^{\geq 2}(\mathfrak{h}_{BCFG})
=S_{reg}\otimes(\mathbb C\oplus\mathfrak{h}_{BCFG}).
\end{equation}

On the other hand, by Lemma \ref{lem: grading of Q(A)},
\begin{equation}\label{QAADE}
\mathrm{gr}\big((Q(\A_{ADE})|_{q_i\rightarrow q_{[i]}})^\Phi\big)=S_{reg,BCFG}\otimes(\mathbb C\oplus \mathfrak h_{BCFG}).
\end{equation}
Notice that the identity of the right-hand sides
of \eqref{eq:MBCFG} and \eqref{QAADE} is exactly given by $\mathrm{gr}(p)\circ\pi$, which proves the proposition.
\end{proof}

\begin{proof}[Proof of Proposition \ref{prop:minimalspecialinBCFG}]
With the above notations, we have the following exact sequence
\begin{eqnarray*}
0\rightarrow\ker p\rightarrow Q(\A_{BCFG})\rightarrow (Q(\A_{ADE})|_{q_i\rightarrow q_{[i]}})^\Phi\rightarrow 0,
\end{eqnarray*}
 and then the following exact sequence
\begin{eqnarray*}
0\rightarrow \mathrm{gr}(\ker p)\rightarrow \mathrm{gr}(Q(\A_{BCFG}))
\rightarrow \mathrm{gr}((Q(\A_{ADE})|_{q_i\rightarrow q_{[i]}})^\Phi)\rightarrow 0.
\end{eqnarray*}	
On the other hand, by Proposition \ref{prop:isomorphism of R module} we have the following commutative diagram
\[
\xymatrix{
                & M_{BCFG} \ar[dl]_{\pi}\ar[dr]^{\cong}              \\
 \mathrm{gr}(Q(\A_{BCFG})) \ar[rr]_{\mathrm{gr}(p)} & &
 \mathrm{gr}\big( (Q(\A_{ADE})|_{q_i\rightarrow q_{[i]}})^\Phi\big),
 }
 \]
 which implies $\pi$ is isomorphism. Thefere $ \mathrm{gr}(p)$
 is an isomorphism and $\mathrm{gr}(\ker p)=0$.
 From this we get
 $\ker p=0$, and thus
 $p$ is an isomorphism.
\end{proof}

\begin{remark}
In \cite{Lo2}, Losev showed that the moduli spaces of these
quantizations are isomorphic to $\mathrm{H}^2(\widehat{\mathcal O}_{ms},\mathbb C)$,
where
$\widehat{\mathcal O}_{ms}$ denotes the smooth loci of the $\mathbb Q$-terminalizations
of $\widetilde{\mathcal O}_{ms}$.
By the works of Fu \cite{Fu2} and Namikawa \cite{Na}, for Lie algebras of types CDEFG,
$\widehat{\mathcal O}_{ms}=\mathcal O_{ms}$, and therefore
the quantizations of these nilpotent orbits are unique since
$\mathrm H^2(\mathcal O_{ms},\mathbb C)\cong\{0\}$ (see \cite{BC} and \cite{Ju}).
Thus these quantizations are isomorphic to the ones presented in the paper.
For type A Lie algebras, since $\widehat{\mathcal O}_{ms}\cong
T^*\mathbb{P}^{n}$ by \cite{Fu1},
the quantizations of their minimal
nilpotent orbits are parameterized by $\mathrm H^2(T^*\mathbb{P}^{n},\mathbb C)\cong\mathbb C$,
and hence are also isomorphic to the ones given in \S\ref{subsect:JosephintypeA}.
For type B Lie algebras, we have only given a quantization
of the minimal special nilpotent orbits that comes from the
one in Lie algebras of type D.
However, in this case, the quantizations are not unique, since the moduli space
is $\mathrm H^2(\widehat{\mathcal O}_{ms},\mathbb C)
\cong\mathrm H^2(T^*\mathbb{Q}^{2n-1},\mathbb C)\cong\mathbb C$
by \cite{Fu1} (see also \cite[Example 1.1]{FRW}).
\end{remark}

\subsection{Equivariant cohomology of the minimal resolutions}

Now let us turn to the Slodowy slices in the BCFG type Lie algebras.
In \cite{S}, Slodowy showed that the intersections of
Slodowy slices to the subregular
nilpotent orbit with the nilpotent cone
are characterized by a pair of subgroups $\Gamma, \Gamma'$
in $\mathrm{SL}_2(\mathbb C)$, where $\Gamma$ is a normal subgroup
of $\Gamma'$. It is given by the following table:
\begin{center}
\begin{tabular}{p{3.5cm}p{3cm}p{3cm}p{2cm}}
\hline
Type of Lie algebra & Type of $\Gamma$ & Type of $\Gamma'$ & $\Gamma'/\Gamma$\\
\hline\hline
$B_n$ & $A_{2n-1}$ & $D_{n+2}$
& $\mathbb Z_2$\\
$C_{n}$ & $D_{n+1}$ & $D_{2n}$ &
 $\mathbb Z_2$\\
 $F_4$ & $E_6$ & $E_7$ &
 $\mathbb Z_2$\\
  $G_2$ & $D_4$ & $E_7$ &
 $\mathfrak S_3$\\
 \hline
\end{tabular}
\end{center}
Slodowy proved the following.

\begin{theorem}[{\cite{S}}]
Suppose $\mathfrak g$ is a Lie algebra of BCFG type
and $\Gamma, \Gamma'\in \mathrm{SL}_2(\mathbb C)$ are given in the above table.
Then for any $x$ in the subregular nilpotent orbit,
we have the following isomorphism:
\begin{eqnarray*}
S_{x}\cap\mathcal N\cong \mathbb C^2/\Gamma,
\end{eqnarray*}
under which the action of $\Gamma'/\Gamma$ on $\mathbb C^2/\Gamma$
corresponds to the action of the stabilizer, denoted by $G_{x,y}$, of
$x$ and $y$ (recall from Definition \ref{def:Slodowyslice}
that $\{x,h,y\}$ is the $\mathfrak{sl}_2(\mathbb C)$ triple) on $S_{x}\cap\mathcal N$.
\end{theorem}

More precisely, in local coordinates (recall the table in
\S\ref{subsect:Kleiniansing}),
\begin{enumerate}
\item[$-$] for the $A_{2n-1}$ singularity $x^{2n}-yz=0$,
the $\mathbb Z_2$-action is given by
$x\mapsto -x,\, y\mapsto -y,\, z\mapsto -z.$

\item[$-$] for the $D_{n+1}$ singularity $xy^2+x^n+z^2=0$,
the $\mathbb Z_2$-action is given by
$x\mapsto x,\, y\mapsto y,\, z\mapsto -z.$

\item[$-$] for the $E_6$ singularity $x^4+y^3+z^2=0$,
the $\mathbb Z_2$-action is given by
$
x\mapsto-x, \, y\mapsto y,\,z\mapsto -z.
$

\item[$-$] for the $D_4$ singularity $xy^2+x^3+z^2=0$,
the $\mathfrak S_3$-action is generated by
$
x\mapsto(-x+\sqrt{-1}y)/2, \, y\mapsto (3\sqrt{-1}x-y)/2,\,z\mapsto z
$
and $x\mapsto x,\,y\mapsto -y,\,z\mapsto -z$.
\end{enumerate}
According to Slodowy, these varieties together with the above symmetries
are called {\it simple singularities} of types $B_n$, $C_n$, $F_4$ and $G_2$
respectively.
Slodowy also showed in \cite[\S6.2]{S} that the group actions lift to the minimal
resolutions of these singularities, and the lifted actions on the irreducible
components on the exceptional fibers are exactly identical to the
ones on the associated Dynkin diagrams, described in the previous subsection.
The following definition is now reasonable.

\begin{definition}
Let $\mathcal B_n$, $\mathcal C_n$, $\mathcal F_4$
and $\mathcal G_2$ be the minimal resolutions of
$B_n$, $C_n$, $F_4$ and $G_2$ respectively.
Their equivariant cohomology algebras
are defined to be
$
\mathrm{H}^\bullet_{\mathbb Z_2\times
\mathbb C^\times}(\mathcal B_n)$,
$
\mathrm{H}^\bullet_{\mathbb Z_2\times\mathbb C^\times}(\mathcal C_n)
$,
$
\mathrm{H}^\bullet_{\mathbb Z_2\times\mathbb C^\times}(\mathcal F_4)
$
and
$
\mathrm{H}^\bullet_{\mathfrak S_3\times\mathbb C^\times}(\mathcal G_2)
$
respectively.
\end{definition}

Next, we turn to the equivariant quantum cohomology, which we denote
by $\mathrm{QH}_R^\bullet $. Let us first recall that
in \cite{BG}, Bryan and Gholampour construct,
for any irreducible and reduced root system,
a Frobenius algebra $(\mathrm {QH}^\bullet_{R}, \star)$
which generalizes Theorem \ref{thm:BryanGholam2}.
Let us go over their construction.

Let $R$ be an irreducible and reduced rank $n$
root system and $\{\alpha_1,\cdots,\alpha_n\}$ be a system
of simple roots.
Let $\mathrm H_R=\mathbb Z\oplus \mathbb Z\alpha_1
\oplus\cdots\oplus\mathbb Z\alpha_n$ and let
$\mathrm{QH}_R^\bullet=\mathrm H_R\otimes
\mathbb Z[t][\![q_1,\cdots, q_n]\!]$.
We associate a group $\Gamma\in\mathrm{SL}_2(\mathbb C)$ listed in the table
in \S\ref{subsect:Kleiniansing} as follows:
the type of $\Gamma$ is the same as the type of a simply-laced Lie algebra
which itself is ADE or
which gives the Lie algebra of BCFG type via the isomorphism \eqref{Kacconstruction}.

\begin{definition}[Bryan and Gholampour \cite{BG}]
Define a product $\star$ on $\mathrm{QH}_R^\bullet$ as follows:
\begin{eqnarray*}
e_\alpha\star e_{\alpha'}=-t^2|\Gamma|\langle e,e'\rangle
+\sum_{\gamma\in\Delta^+}t\langle\alpha,\gamma\rangle\langle\alpha',
\gamma^\vee\rangle\dfrac{1+q^\gamma}{1-q^\gamma}  e_\gamma,
\end{eqnarray*}
 where $\gamma^\vee=\dfrac{2}{\langle \gamma,\gamma\rangle}\gamma$,
$e_\gamma=c_1e_1+\cdots+c_n e_n$
if the root $\gamma=c_1\alpha_1+\cdots +c_n \alpha_n$
with $\alpha_1,\cdots, \alpha_n$ being the simple roots,
$\alpha, \alpha'$ being the positive roots corresponding to
$e_\alpha$ and $e_{\alpha'}$, and
$\langle -,-\rangle$ is the inner product in the root system.	
\end{definition}

Bryan and Gholampour showed that the product thus defined
is associative and $(\mathrm{QH}_R^\bullet,\star)$ forms a Frobenius algebra
(see \cite[Theorem 6]{BG}).
Observe that if $R$ is of ADE type, $\mathrm{QH}_R^\bullet$
is exactly the $\mathbb C^\times$-equivariant quantum cohomology
of the minimal resolution of the ADE singularity (see Theorem \ref{thm:BryanGholam2}).

Now if $R$ is of BCFG type, to specify the particular root system,
let us denote $\mathrm{QH}_R^\bullet$ by
$\mathrm{QH}_R^\bullet(\mathcal B_n)$,
$\mathrm{QH}_R^\bullet(\mathcal C_n)$,
$\mathrm{QH}_R^\bullet(\mathcal F_4)$
and
$\mathrm{QH}_R^\bullet(\mathcal G_2)$.
We first show that they are the ``equivariant quantum" cohomology
of $\mathcal B_n$, $\mathcal C_n$, $\mathcal F_4$ and $\mathcal G_2$ respectively; that is,
they are the deformations of the equivariant cohomology:

\begin{proposition}\label{quantumcohofBCFG}
There are the following isomorphisms
\begin{eqnarray*}
\begin{array}{ll}
\mathrm {QH}^\bullet_{R}
(\mathcal B_n)|_{q= 0}\cong
\mathrm{H}^\bullet_{\mathbb Z_2\times\mathbb C^\times}(\mathcal B_n),&
\mathrm{QH}^\bullet_{R}(\mathcal C_n)|_{q=0}\cong
\mathrm{H}^\bullet_{\mathbb Z_2\times\mathbb C^\times}(\mathcal C_n),\\[2mm]
\mathrm{QH}^\bullet_{R}
(\mathcal F_4)|_{q=0}\cong
\mathrm{H}^\bullet_{\mathbb Z_2\times\mathbb C^\times}(\mathcal F_4),&
\mathrm{QH}^\bullet_{R}
(\mathcal G_2)|_{q= 0}\cong
\mathrm{H}^\bullet_{{\mathfrak S_3}\times\mathbb C^\times}(\mathcal G_2)
\end{array}	
\end{eqnarray*}
of algebras over $\mathbb C[\hbar]$.
\end{proposition}

\begin{proof}
We show the first isomorphism.
In fact,
\begin{align*}
\mathrm{QH}_R^\bullet(\mathcal B_n)|_{q=0}
&\cong
\left(\mathrm{QH}^\bullet_{\mathbb C^\times}
(\mathcal A_{2n-1})|_{q_i=0}\right)^{\mathbb Z_2}\\
&\cong
\mathrm{H}_{\mathbb C^\times}(\mathcal A_{2n-1})^{\mathbb Z_2}\\
&\cong\mathrm H_{\mathbb Z_2\times
\mathbb C^\times}(\mathcal A_{2n-1})\\
&\cong
\mathrm H_{\mathbb Z_2\times\mathbb C^\times}(\mathcal B_n).
\end{align*}
The rest isomorphisms are proved similarly,
and we leave them to the interested reader.
\end{proof}

Next we relate the these equivariant quantum cohomology
with those of ADE resolutions.
Take $\mathcal B_n$ and $\mathcal A_{2n-1}$, for example.
Identify the generators of their equivariant quantum cohomology
with the simple roots in the Dynkin diagram.
Recall that $\Lambda=\{1,2,\cdots, 2n-1\}$ be the index set
for the simple roots of $A_{2n-1}$,
and let $\bar\Lambda=\Lambda/\Phi$, where
$\nu$ is the nontrivial automorphism
of the Dynkin diagram.
Specialize the quantum variables
$\{q_i\}_{i\in \Lambda}$ to variables
$\{q_{[i]}\}_{[i]\in \bar\Lambda}$ by setting
$q_i= q_{[i]}$. Then we obtain a map
$$
\left(\mathrm{QH}_{\mathbb C^\times}
(\mathcal A_{2n-1})|_{q_i\rightarrow q_{[i]}}\right)^{\mathbb Z_2}
\to \mathrm{QH}_R^\bullet(\mathcal B_n):
\frac{1}{2}
(e_{i}+e_{\nu(i)})\mapsto e_{[i]}.
$$
For the other singularities, proceed analogously
and we obtain the following.

\begin{proposition}[{\cite[\S4.3]{BG}}]\label{prop:SlodowyinBCFG}
There are the following isomorphisms
\begin{eqnarray*}
\begin{array}{ll}
\mathrm {QH}^\bullet_{R}
(\mathcal B_n)\cong
\left(\mathrm{QH}^\bullet_{\mathbb C^\times}
(\mathcal A_{2n-1})|_{q_i\rightarrow q_{[i]}}\right)^{\mathbb Z_2},&
\mathrm{QH}^\bullet_{R}
(\mathcal C_n)\cong \left(
\mathrm{QH}^\bullet_{\mathbb C^\times}
(\mathcal D_{n+1})|_{q_i\rightarrow q_{[i]}}\right)^{\mathbb Z_2},\\[2mm]
\mathrm{QH}^\bullet_{R}
(\mathcal F_4)\cong \left(
\mathrm{QH}^\bullet_{\mathbb C^\times}
(\mathcal E_6)|_{q_i\rightarrow q_{[i]}}\right)^{\mathbb Z_2},&
\mathrm{QH}^\bullet_{R}
(\mathcal G_2)\cong \left(
\mathrm{QH}^\bullet_{\mathbb C^\times}(\mathcal D_4)|_{q_i\rightarrow q_{[i]}}\right)^{\mathfrak S_3},
\end{array}	
\end{eqnarray*}
which are compatible with the quantum product $\star$.
\end{proposition}

\begin{proof}
See \cite[Theorem 6]{BG}.
\end{proof}

\begin{remark}
Notice that the Cartan matrix elements
in \cite{BG} are given by $n_{\alpha,\beta}=\dfrac{2\langle\alpha,
\beta\rangle}{\langle\alpha,\alpha\rangle}$, while in this paper we use the notation of \cite{Bo},  where
$n_{\alpha,\beta}=\dfrac{2\langle\alpha,\beta\rangle}{\langle\beta,\beta\rangle}$.
\end{remark}

\subsection{Proof of Theorems \ref{maintheorem2} and \ref{maintheorem3}}

We are now ready to prove Theorem \ref{maintheorem2}.

\begin{proof}[Proof of Theorem \ref{maintheorem2}]

Combining Theorem \ref{maintheorem0}, Propositions \ref{prop:minimalspecialinBCFG} and \ref{prop:SlodowyinBCFG},
we get the desired isomorphisms.
\end{proof}

Now we consider the extremal situation where $q=0$.
First, we have the following.

\begin{proposition}\label{BalgebraofBCFG}
There are isomorphisms of $B$-algebras:
$$
\begin{array}{ll}
B(\mathscr A[\tilde{\mathcal{O}}_{ms}(B_n)])
\cong B(\mathscr A[\overline{\mathcal O}_{min}(D_{n+1})])^{\mathbb Z_2},&
B(\mathscr A[\tilde{\mathcal{O}}_{ms}(C_n)])
\cong B(\mathscr A[\overline{\mathcal O}_{min}(A_{2n-1})])^{\mathbb Z_2},\\[2mm]
B(\mathscr A[\tilde{\mathcal{O}}_{ms}(F_4)])
\cong B(\mathscr A[\overline{\mathcal O}_{min}(E_6)])^{\mathbb Z_2},&
B(\mathscr A[\tilde{\mathcal{O}}_{ms}(G_2)])\cong
B(\mathscr A[\overline{\mathcal O}_{min}(D_4)])^{\mathfrak S_3}.
\end{array}
$$
\end{proposition}

\begin{proof}
Analogous to the proof of Proposition \ref{prop:minimalspecialinBCFG}.
\end{proof}

\begin{proof}[Proof of Theorem \ref{maintheorem3}]
In Theorem \ref{maintheorem2}, let $q=0$, then by Propositions
\ref{quantumcohofBCFG} and \ref{BalgebraofBCFG},
the specialized quantum D-module reduces to the
equivariant cohomology, and the D-module of graded traces $Q(\A(-))$ reduces
to the ring $B(\A(-))$. The theorem follows.
\end{proof}

\appendix
\section{Proof of Lemma \ref{lemmma:dimV0}}\label{appendix}
In this section we prove Lemma \ref{lemmma:dimV0}. The main tool we use is the following formula
(see \cite[Theorem 22.3]{Hum}).

\begin{lemma}[Freudenthal]
Let $V=V(\lambda)$ be an irreducible $\g$-module of highest weight $\lambda$.
Let $\Lambda$ be the set of
weights of $V$. For $\mu\in \Lambda$,
set the multiplicity $m(\mu)$ to be the dimension of the weight space $V^{\mu}$,
 then $m(\mu)$ is given recursively as follows:
\begin{equation}\label{Freudenthal}
	\big((\lambda+\delta, \lambda+\delta)-(\mu+\delta, \mu+\delta)
	\big)m(\mu)
	=2\sum_{\alpha\in \Delta^+}\sum_{i=1}^{+\infty}m(\mu+i\alpha)(\mu+i\alpha, \alpha),
\end{equation}
where $\delta=\frac{1}{2}\sum_{\alpha\in \Delta^+}\alpha$.
\end{lemma}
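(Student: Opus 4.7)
The plan is to prove Freudenthal's multiplicity formula by computing the trace of the Casimir element $C$ on the weight subspace $V_\mu\subseteq V(\lambda)$ in two different ways. Since $C$ is central and $V(\lambda)$ is irreducible, $C$ acts on $V(\lambda)$ by a single scalar $c_\lambda$; by testing on a highest-weight vector one shows $c_\lambda=(\lambda+\delta,\lambda+\delta)-(\delta,\delta)$. Consequently $\mathrm{Tr}(C|_{V_\mu})=m(\mu)\cdot c_\lambda$, and the entire task reduces to evaluating the trace on the right-hand side explicitly.

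First I would choose dual bases with respect to the Killing form adapted to the root space decomposition: a basis $\{h_i\}$ of $\h$ together with its dual $\{h_i^\vee\}$, and root vectors $x_\alpha\in\g^\alpha$, $y_\alpha\in\g^{-\alpha}$ with $(x_\alpha,y_\alpha)=1$, so that $C=\sum_i h_ih_i^\vee+\sum_{\alpha\in\Delta^+}(x_\alpha y_\alpha+y_\alpha x_\alpha)$. The Cartan part $\sum_i h_ih_i^\vee$ contributes $(\mu,\mu)m(\mu)$ on $V_\mu$. For each positive root $\alpha$ I would rewrite $x_\alpha y_\alpha+y_\alpha x_\alpha=2y_\alpha x_\alpha+h_\alpha$; the scalar $h_\alpha$ acts by $(\mu,\alpha)$ on $V_\mu$, so summing over $\alpha\in\Delta^+$ and using $\sum_{\alpha>0}\alpha=2\delta$ yields a total contribution of $2(\mu,\delta)m(\mu)$ from the $h_\alpha$-terms.

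The main obstacle, and the heart of the argument, is computing $\mathrm{Tr}(y_\alpha x_\alpha|_{V_\mu})$ for each positive root $\alpha$. The key is to restrict $V(\lambda)$ to the three-dimensional subalgebra $\langle x_\alpha,h_\alpha,y_\alpha\rangle\cong\mathfrak{sl}_2$ and decompose it into a direct sum of $\alpha$-strings through $\mu$; on each such string the action of $y_\alpha x_\alpha$ is governed by the standard formula for $fe$ in a finite-dimensional irreducible $\mathfrak{sl}_2$-representation. Summing the trace contributions over all strings, telescoping the resulting sums along the ray $\mu,\mu+\alpha,\mu+2\alpha,\ldots$, and using that each string eventually leaves $V(\lambda)$, one obtains the identity
\begin{equation*}
\mathrm{Tr}(y_\alpha x_\alpha|_{V_\mu})=\sum_{i\ge 1}m(\mu+i\alpha)(\mu+i\alpha,\alpha).
\end{equation*}
This is the delicate step, where one must keep track of multiplicities carefully as the string contains weights of possibly large multiplicity in $V(\lambda)$.

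Finally, I would assemble the pieces: the identity $\mathrm{Tr}(C|_{V_\mu})=m(\mu)c_\lambda$ becomes
\begin{equation*}
m(\mu)c_\lambda=m(\mu)(\mu,\mu)+2m(\mu)(\mu,\delta)+2\sum_{\alpha\in\Delta^+}\sum_{i\ge 1}m(\mu+i\alpha)(\mu+i\alpha,\alpha).
\end{equation*}
Rewriting $(\lambda+\delta,\lambda+\delta)-(\delta,\delta)-(\mu,\mu)-2(\mu,\delta)=(\lambda+\delta,\lambda+\delta)-(\mu+\delta,\mu+\delta)$ by direct expansion, one arrives at the stated recursion \eqref{Freudenthal}. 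The recursion is genuinely useful because the factor $(\lambda+\delta,\lambda+\delta)-(\mu+\delta,\mu+\delta)$ is nonzero whenever $\mu$ is a non-highest weight strictly below $\lambda$ in the dominance order, so $m(\mu)$ is determined by the multiplicities $m(\mu+i\alpha)$ which are strictly closer to $\lambda$.
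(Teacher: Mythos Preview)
The paper does not supply its own proof of this lemma: it is stated with a citation to Humphreys \cite[Theorem~22.3]{Hum} and used as a black box in the subsequent dimension computations. Your proposal is the standard Casimir-trace argument and is essentially the proof found in Humphreys, so there is nothing to compare against in the paper itself.
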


\begin{proof}[Proof of Lemma \ref{lemmma:dimV0}]
Notice that $\dim V(2\theta)_0$ is just the multiplicity $m(0)$ in $ V(2\theta)$.
We prove the lemma case by case.

\noindent\textbf{The $A_n$ case:}
Firstly we list some data in $A_n$ case (see \cite{Hum} or \cite{Kac}).
\begin{align*}
Q&=\left\{\sum_{i=1}^{n+1}k_i\varepsilon_i|k_i\in \Z, \sum_i k_i=0\right\},\\
\Delta &=\{\varepsilon_i-\varepsilon_j\}, \quad \Delta^+ =\{\varepsilon_i-\varepsilon_j|i<j \} ,\\
\Pi&=\{\alpha_1=\varepsilon_1-\varepsilon_2, \alpha_2=\varepsilon_2-\varepsilon_3,
\cdots , \alpha_n=\varepsilon_n-\varepsilon_{n+1}\},\\
\theta &=\varepsilon_1-\varepsilon_{n+1},\quad
\delta=\frac{1}{2}(n\varepsilon_1+(n-2)\varepsilon_2
+\cdots -(n-2)\varepsilon_n-n\varepsilon_{n+1}),\\
W &=\{\text{all permutations of the $\varepsilon_i$}\}.
\end{align*}
 Since $2\theta=2(\varepsilon_1-\varepsilon_{n+1})$ is the highest weight of $V(2\theta)$,
 $m(2\theta)=1$. Since $2\theta=2(\varepsilon_1-\varepsilon_{n+1})$, and $m(\mu)$ is
 invariant under the $W$-action (see \cite[Theorem 21.2]{Hum}), we have
 \begin{equation}\label{117}
 	m(2(\varepsilon_i-\varepsilon_j))=1.
 \end{equation}
 Now we consider $m(2\varepsilon_1-\varepsilon_n-\varepsilon_{n+1})$. By \eqref{Freudenthal}, we have
 \begin{align*}
 &\big((2\theta+\delta, 2\theta+\delta)-(2\varepsilon_1-\varepsilon_n-\varepsilon_{n+1}
 +\delta, 2\varepsilon_1-\varepsilon_n-\varepsilon_{n+1}+\delta)\big)
 m(2\varepsilon_1-\varepsilon_n-\varepsilon_{n+1})\\
=&2m(2\theta)(2\theta, \varepsilon_{n}-\varepsilon_{n+1}).
 \end{align*}
 One can check that
 \begin{align*}
 &(2\theta, \varepsilon_{n}-\varepsilon_{n+1})=2,\\
 &(2\theta+\delta, 2\theta+\delta)-(2\varepsilon_1-\varepsilon_n-\varepsilon_{n+1}
+\delta, 2\varepsilon_1-\varepsilon_n-\varepsilon_{n+1}+\delta)=4.
 \end{align*}
Therefore
 \begin{equation*}
 	m(2\varepsilon_1-\varepsilon_n-\varepsilon_{n+1})=1.
 \end{equation*}
By the $W$-invariance of $m(\mu)$ and $m(\mu)=m(-\mu)$, we have
\begin{equation}\label{118}
	m(\pm(2\varepsilon_i-\varepsilon_j-\varepsilon_k))=1.
\end{equation}
Now we consider $m(\varepsilon_1+\varepsilon_2-\varepsilon_{n}-\varepsilon_{n+1})$.
By \eqref{Freudenthal}, we have
\begin{align*}
&\big(
(2\theta+\delta, 2\theta+\delta)-(\varepsilon_1+\varepsilon_2-\varepsilon_{n}-\varepsilon_{n+1}
+\delta, \varepsilon_1+\varepsilon_2-\varepsilon_{n}-\varepsilon_{n+1}+\delta)\big)\\
&\cdot m(\varepsilon_1+\varepsilon_2-\varepsilon_{n}-\varepsilon_{n+1})\\
=&2\big(m(2\varepsilon_1-\varepsilon_{n}-\varepsilon_{n+1})(2\varepsilon_1-\varepsilon_{n}-\varepsilon_{n+1} ,
\varepsilon_1-\varepsilon_2)\\
&+m(\varepsilon_1+\varepsilon_2-2\varepsilon_{n+1})(\varepsilon_1
+\varepsilon_2-2\varepsilon_{n+1}, \varepsilon_n-\varepsilon_{n+1})\big).
\end{align*}
By \eqref{118}, we have $m(2\varepsilon_1-\varepsilon_{n}-\varepsilon_{n+1})
=m(\varepsilon_1+\varepsilon_2-2\varepsilon_{n+1})=1$. Furthermore,
\begin{align*}
&(2\varepsilon_1-\varepsilon_{n}-\varepsilon_{n+1} , \varepsilon_1-\varepsilon_2)
=(\varepsilon_1+\varepsilon_2-2\varepsilon_{n+1}, \varepsilon_n-\varepsilon_{n+1})=2,\\
&(2\theta+\delta, 2\theta+\delta)-(\varepsilon_1+\varepsilon_2-\varepsilon_{n}
-\varepsilon_{n+1}+\delta, \varepsilon_1+\varepsilon_2-\varepsilon_{n}-\varepsilon_{n+1}+\delta)=8.
\end{align*}
Thus $m(2\theta-\alpha_1-\alpha_n)=1$ by the $W$-invariance of $m(\mu)$, and we have
\begin{equation}\label{119}
	m(\varepsilon_i+\varepsilon_j-\varepsilon_k-\varepsilon_l)=1.
\end{equation}
Now we calculate $m(\theta)$. By \eqref{Freudenthal},
\begin{equation*}
\big((2\theta+\delta, 2\theta+\delta)-(\theta+\delta, \theta+\delta)\big)
m(\theta)=2\sum_{\alpha\in \Pi} m(\theta+\alpha)(\theta+\alpha, \alpha).
\end{equation*}
By \eqref{117}, \eqref{118} and \eqref{119}, $m(\theta+\alpha)=1$. Furthermore, we have
\begin{align*}
	&(2\theta+\delta, 2\theta+\delta)-(\theta+\delta, \theta+\delta)=6+2n,\\
	& \sum_{\alpha\in \Pi}(\theta+\alpha, \alpha)=(\theta, 2\delta)+2|\Pi|=2n+2\cdot
	\frac{n(n+1)}{2} =n(n+3).
\end{align*}
Then $m(\theta)=n$ and by the $W$-invariance of $m(\mu)$,
\begin{equation}\label{120}
	m(\varepsilon_i-\varepsilon_j)=n.
\end{equation}
Finally, by \eqref{Freudenthal},
\begin{align}\label{A-m(0)}
	\big((2\theta+\delta, 2\theta+\delta)-(\delta, \delta)\big)m(0)
		=2\sum_{\alpha\in \Pi} \big(m(\alpha)(\alpha , \alpha)+m(2\alpha)(2\alpha, \alpha)\big).
\end{align}
By \eqref{117} and \eqref{120}, we have $m(\alpha)=n$ and $m(2\alpha)=1$. Furthermore,
$$(2\theta+\delta, 2\theta+\delta)-(\delta, \delta)=4n+8.$$
Thus \eqref{A-m(0)} is equivalent to
$$(4n+8)m(0)=2(2n+4)|\Pi|=4(n+2)\cdot\frac{n(n+1)}{2},$$
which induces
\begin{equation*}
m(0)=\frac{n(n+1)}{2}.
\end{equation*}

\noindent\textbf{The $D_n$ case:} The data of $D_n$ is as follows:
\begin{align*}
Q&=\left\{\sum_{i=1}^{n}k_i\varepsilon_i|k_i\in \Z, \sum_i k_i\in 2\Z\right\},\\
\Delta &=\{\pm \varepsilon_i \pm \varepsilon_j\},\quad \Delta^+=\{\varepsilon_i \pm \varepsilon_j| i<j\},\\
\Pi&=\{\alpha_1=\varepsilon_1-\varepsilon_2, \alpha_2=\varepsilon_2-\varepsilon_3, \cdots , \alpha_{n-1}
=\varepsilon_{n-1}-\varepsilon_{n}, \alpha_n=\varepsilon_{n-1}+\varepsilon_n\},\\
\theta
&=\varepsilon_1+\varepsilon_{2}, \quad \delta=(n-1)\varepsilon_1+(n-2)\varepsilon_2+\cdots+ \varepsilon_{n-1}, \\
W &=\{\text{all permutations and even number of sign changes of the $\varepsilon_i$}\}.
\end{align*}
The argument is similar to $A_n$, so we just list the result and omit the details:
\begin{align*}
&m(2\theta)=m(2(\varepsilon_1+\varepsilon_2))=m(\pm2(\varepsilon_i\pm \varepsilon_j))=1,\\
&m(2\varepsilon_1+\varepsilon_2+\varepsilon_3)=m(\pm 2\varepsilon_i\pm \varepsilon_j \pm \varepsilon_k)=1,\\
&m(\varepsilon_1+\varepsilon_2+\varepsilon_3+\varepsilon_4)=
m(\pm \varepsilon_i\pm \varepsilon_j\pm \varepsilon_k\pm \varepsilon_l)=2,\\
&m(2\varepsilon_1)=m(\pm \varepsilon_i)=n-2,\\
&m(\varepsilon_1+\varepsilon_2)=m(\pm \varepsilon_i\pm \varepsilon_j)=2n-3,\\
&m(0)=n(n-1).
\end{align*}
	
\noindent\textbf{The type E case:} By \cite[\S4]{BM}, we know that
for $E_6$, $m(0)=36$; for $E_7$, $m(0)=63$ and for $E_8$, $m(0)=120$.
They are exactly $\displaystyle\frac{\dim\g-\dim\h}{2}$ in these cases.

In summary, in all the ADE cases, we have $\displaystyle m(0)=\frac{\dim\g-\dim\h}{2}$.
\end{proof}

\end{document}